\titleformat*{\section}{\normalsize\bfseries}
\titleformat*{\subsection}{\normalsize\itshape}
\numberwithin{equation}{section}
\theoremstyle{remark}
\theoremstyle{definition}
\newtheorem{theorem}{Theorem}[section]
\newtheorem{proposition}{Proposition}[section]
\newtheorem{lemma}{Lemma}[section]
\newtheorem{corollary}{Corollary}[section]
\newcommand{\f}[1]{\pmb{#1}}
\DeclareMathOperator{\R}{\mathbb{R}}
\DeclareMathOperator{\C}{\mathcal{C}}
\DeclareMathOperator{\F}{\mathcal{F}}
\DeclareMathOperator{\AC}{\mathcal{AC}}
\DeclareMathOperator{\V}{\f H^1_{0,\sigma}}
\DeclareMathOperator{\Vd}{(\f H^{1}_{0,\sigma})^*}
\DeclareMathOperator{\Ha}{\f L^2_{\sigma}}
\DeclareMathOperator{\Hb}{\f{H}^1_0}
\DeclareMathOperator{\Hc}{\f H^2}
\DeclareMathOperator{\He}{\f{H}^1}
\DeclareMathOperator{\Le}{\f{L}^2}
\DeclareMathOperator{\sgn}{sgn}
\newcommand{\Hrand}[1]{{\f H^{\nicefrac{#1}{2}}(\partial\Omega)}}
\DeclareMathOperator{\Se}{\mathcal{S}}
\DeclareMathOperator{\M}{\mathcal{M}}
\DeclareMathOperator{\ra}{\rightarrow}
\DeclareMathOperator{\de}{\text{d}}
\DeclareMathOperator{\tr}{tr}
\newcommand{\dreidots}{\text{\,\multiput(0,-2)(0,2){3}{$\cdot$}}\,\,\,\,}
\newcommand{\dreidotkom}{\text{\,\multiput(0,0)(0,2){2}{$\cdot$}\put(0,0){,}}\,\,\,\,}
\newcommand{\pat}[2]{\frac{\partial #1}{\partial #2}}
\DeclareMathOperator{\di}{\nabla \cdot}
\newcommand{\ov}[1]{\overline{#1}}
\newcommand{\rot}[1]{[ #1 ]_{\f X}}
\newcommand{\rott}[1]{[ #1 ]_{-\f X}}
\DeclareMathOperator{\curl}{\nabla \times}
\newcommand{\intt}[1]{\int_{0}^T\left[{ #1}\right] \de t}
\newcommand{\intte}[1]{\int_{0}^T{ #1} \de t}
\newcommand{\inttet}[1]{\int_{0}^{t}{ #1} \de s}
\newcommand{\inttett}[1]{\int_{0}^{t}\left [{ #1} \right ]\de s}
\newcommand{\inttes}[1]{\int_{s}^{t}{ #1} \de \tau}
\renewcommand{\ll}[1]{\langle\hspace{-0.75mm}\langle{#1}\rangle\hspace{-0.75mm}\rangle}
\newtheorem{rem}{Remark}
\newtheorem{defi}{Definition}
\DeclareMathOperator{\sym}{{sym}}
\DeclareMathOperator{\Lap}{\Delta_{\f \Lambda}}
\DeclareMathOperator{\skw}{skw}
\newcommand{\sy}[1]{(\nabla \f #1)_{{\sym}}}
\newcommand{\sk}[1]{(\nabla \f #1)_{\skw}}
\renewcommand{\t}{\partial_t  }
\newcommand{\syd}[1]{(\nabla \fd {#1})_{\sym}}
\newcommand{\fn}[1]{\f {{#1}}_{n}}
\newcommand{\fd}[1]{\f {{#1}}_{\delta}}
\newcommand{\fk}[1]{\f {{#1}}}
\newcommand{\vv}{\tilde{\f v}}
\newcommand{\dd}{\tilde{\f d}}
\newcommand{\syv}{(\nabla \tilde{\f v})_{{\sym}}}
\newcommand{\skv}{(\nabla \tilde{\f v})_{\skw}}
\renewcommand{\o}{\otimes}
\newcommand{\tq}{\tilde{\f q}}
\newcommand{\te}{\tilde{\f e}}
\author{%
\textsc{ Robert Lasarzik}\footnote{
Department of Mathematics,
Secr. MA 5-3, Technical University Berlin,
Straße des 17. Juni 136,
D-10623 Berlin, Germany
}}
\title{\begin{Large}
\textbf{Weak-strong uniqueness for measure-valued solutions to the Ericksen--Leslie model equipped with the Oseen--Frank free energy}\footnote{This work was funded by CRC 901 {\em Control of self-organizing nonlinear systems: Theoretical methods and concepts of application} (Project A8)\/.
}
\end{Large}
}
\begin{document}
\markboth{Weak-strong uniqueness for measure-valued solutions to the Ericksen--Leslie model}{R.~Lasarzik}
\date{Version \today}
\maketitle
\begin{abstract}\normalsize We analyze the Ericksen--Leslie system equipped with the Oseen--Frank energy in three space dimensions. Recently, the author introduced the concept of measure-valued solutions to this system and showed the global existence of these generalized solutions. In this paper, we show that suitable measure-valued solutions, which fulfill an associated energy inequality, enjoy the weak-strong uniqueness property, i.\,e.~the measure-valued solution agrees with a strong solution if the latter exists. The weak-strong uniqueness is shown by a relative energy inequality for the associated nonconvex energy functional.
\newline
\newline
{\em Keywords:
Liquid crystal,
Ericksen--Leslie equation,
Measure-valued solution,
Weak-strong uniqueness
}
\newline
{\em MSC (2010): 35Q35, 35K52, 35R06, 76A15
}
\end{abstract}
\setcounter{tocdepth}{1}
\tableofcontents
\section{Introduction}\label{sec:intro}
Nonlinear partial differential equations require generalized solution concepts. Uniqueness of solutions, however often is an open problem because of lack of regularity. A prominent example are the Navier-Stokes equations in three space dimensions with Leray's~\cite{leray} weak solutions and Serrin's~\cite{serrin} uniqueness result. 
Already there the concept of weak-strong uniqueness is applied: a generalized solution is compared with a solution exhibiting more regularity. 
The generalized solution concept of measure-valued solutions was first introduced by Tartar~\cite{tartar} using conventional Young measures. 
This concept is also well-known in the context of the
 Euler equations. DiPerna and Majda~\cite{DiPernaMajda} define measure-valued solutions to the Euler equation by  introducing  so-called \textit{generalized Young-measures}, capturing oscillation and concentration effects for sequences that are only bounded in $L^1$. Another step in the analysis of such sequences is done in Alibert and Bouchitt{\'e}~\cite{alibert} and we will heavily rely on the techniques introduced there. 
Brenier, De Lellis, and Sz{\'e}kelyhidi~\cite{weakstrongeuler}
prove the first weak-strong uniqueness result for measure-valued solutions, involving generalized Young measures. They consider the incompressible Euler equation and certain hyperbolic conservation laws.
Some of the techniques introduced in~\cite{weakstrongeuler} are also used in this work. 
Additionally, there are works on the weak-strong uniqueness of measure-valued solutions 
by {Demoulini, Stuart and Tzavaras}~\cite{demoulini} in the context of elastodynamics
 and by {Gwiazda, {\'S}wierczewska-Gwiazda \& Wiedemann}~\cite{gwiazda} for a class of compressible Euler equations. 
 
 An important argument of this article relies on the formulation of a relative energy, which is often called relative entropy in this context ( see {Feiereisl, Jin \& Novotn{\'y}}~\cite{Feireislrelative}). 
 The idea of an relative entropy to compare two solutions goes back to Dafermos~\cite{dafermos}.
 
 For a strictly convex entropy function $\eta: \R \ra \R$, the relative entropy of two solutions~$u$ and $\tilde{u}$ is given by (see Ref.~\cite[Sec.~5.3]{dafermos2})
\begin{align}
\mathcal{E}:= \eta (u) - \eta(\tilde{u}) - \langle\eta'(\tilde{u}),( u -\tilde{u})\rangle \,.\label{relencon}
\end{align}
The strict convexity of $\eta$ guarantees that $\mathcal{E}$ is nonnegative.

 The approach of the relative energy for convex functionals has been used, e.g., to show the weak-strong uniqueness property of solutions (see {Feireisl and Novotn\'y}~\cite{novotny}), the stability of an equilibrium state (see Feireisl~\cite{feireislstab}), the convergence to a singular limit problem (see {Breit, Feireisl and Hofmanova}~\cite{breit} as well as {Feireisl}~\cite{feireislsingular}), or to derive {\em a posteriori} 
estimates of numerical solutions (see {Fischer}~\cite{fischer}).
Another possible application is the definition of a generalized solution concept, the so-called dissipative solutions. The formulation of such a concept relies on an inequality instead of an equality (see Lions~\cite[Sec.~4.4]{lionsfluid}).
 
Our aim is to generalize the above concept of weak-strong uniqueness for measure-valued solutions via a relative energy approach to the three dimensional Ericksen--Leslie model equipped with the Oseen--Frank energy describing nematic liquid crystal flow.

Nematic liquid crystals are anisotropic fluids. The rod-like molecules build, or are dispersed in, a fluid and are directionally ordered. This ordering and its direction heavily influences the properties of the material such as light scattering or flow behaviour. This gives rise to many applications, where \textit{liquid crystal displays} are only the most prominent ones. The \textit{Ericksen--Leslie model} is the most common model to describe nematic liquid crystals. The direction of the aligned molecule is modelled by a unit-vector field and the fluid flow by a velocity field. Since this model was proposed by Ericksen~\cite{Erick2} and Leslie~\cite{leslie} in the 60ies, it is extensively studied.

The first mathematical analysis of  a simplified Ericksen--Leslie model is done by Lin and Liu~\cite{linliu1}. They show global existence of weak solutions and local existence of strong solutions. Additionally, they manage to generalise this results to a more realistic model~\cite{linliu3}. They also show partial regularity of weak solutions to the considered system~\cite{linliu2}. Following this works, there have been many articles considering slightly more complicated models, see~\cite{prohl},~\cite{allgemein},~\cite{isothermal},~or~\cite{wal1} for example. To the best of the author's knowledge, the only generalisation with respect to the free energy potential is performed by Emmrich and the author in~\cite{unsere}. 

There are also results on \textit{local existence of solutions} to realistic models (see for instance~\cite{localin3d},~\cite{recent} or~\cite{Pruess2}). Especially, local strong solutions are known to exists to different simplifications of the system considered in this article.
The full (thermodynamically consistent) Ericksen--Leslie system with the one constant approximation of the Oseen--Frank energy is considered in~\cite{Pruess2}. Whereas, the simplified Ericksen--Leslie system with the full Oseen--Frank energy is studied in~\cite{localin3d}. 
For more on liquid crystals, we refer to Emmrich, Klapp and Lasarzik~\cite{sabine}. 
Since finite time singularities in nematic liquid crystals have been observed  experimentally~\cite{singul2} and analytically~\cite{blow}, it seems appropriate to inquire a weakened solution concept such as measure-valued solutions.

Recently, the existence of measure-valued solutions to the Ericksen--Leslie model equipped with the Oseen--Frank energy has been proven by the author~(see~\cite{masswertig}). 
It is the first solution concept for the Ericksen--Leslie model in its full generality. The measure-valued solutions are a very weak solution concept. The Gradient of the director is only represented by a generalized Young measure. This article is dedicated to the proof of the weak-strong uniqueness property of the measure-valued solutions to the Ericksen--Leslie model.
The main theorem (see Theorem~\ref{thm:main}) shows that a  suitable measure-valued solution coincides with a strong solution emanating from the same initial data, as long as the latter exists. 
A suitable measure-valued solution fulfills an additional energy inequality. This energy inequality can only be shown to hold under the assumption of Parodi's relation~(see~\eqref{parodi} below), which follows from Onsager's reciprocal principle. The weak-strong uniqueness can also shown to hold for suitable measure-valued solutions without assuming Parodi's relation, but these solutions are not known to exist.

The novelty of this paper is the generalization of the relative energy approach to a system with a nonconvex energy. Since the Oseen--Frank energy is not convex, the relative energy defined by~\eqref{relencon} is not necessarily positive anymore. In this paper we use an alternative way to define the relative energy (see~\eqref{relEn}) for the nonconvex energy Oseen--Frank energy and derive a relative energy inequality resulting in the weak-strong uniqueness property of the solutions. This new approach can hopefully be used to prove other properties, like \textit{a posteriori} estimates or the existence of dissipative solutions, of the system too.


The paper is organised as follows: In Section~\ref{sec:not}, we collect some notation. Section~\ref{sec:model} contains the model, the definition of the suitable generalized solutions, the definition of the strong solutions and the main result.
In Section~\ref{sec:pre}, some auxiliary lemmas are collected and the energy equality for the strong solution is shown.
While Section~\ref{sec:rel} introduces the relative energy and some associated estimates, Section~\ref{sec:int} collects certain integration-by-parts formulae.
The proof of the main result is carried out in Section~\ref{sec:main}. 
In Appendix~\ref{sec:app}, some tensor calculations are collected.


\subsection*{Notation\label{sec:not}}
Vectors of $\R^3$ are denoted by bold small Latin letters. Matrices of $\R^{3\times 3}$ are denoted by bold capital Latin letters. We also use tensors of higher order, which are denoted by bold capital Greek letters.
Moreover, numbers are denoted be small Latin or Greek letters, and capital Latin letters are reserved for potentials.
The euclidean scalar product in $\R^3$ is denoted by a dot $ \f a \cdot \f b : = \f a ^T \f b = \sum_{i=1}^3 \f a_i \f b_i$,  for $ \f a, \f b \in \R^3$ and the Frobenius product in $\R^{3\times 3}$ by a double point $ \,\f A: \f B:= \tr ( \f A^T \f B)= \sum_{i,j=1}^3 \f A_{ij} \f B_{ij}$, for $\f A , \f B \in \R^{3\times 3}$.
Additionally, the scalar product in the space of Tensors of order three is denoted by three dots 
\begin{align*}
\f \Upsilon \dreidots\, \f \Gamma : =\left [ \sum_{j,k,l=1} ^3 \f \Upsilon_{jkl} \f \Gamma_{jkl}\right ], \quad    \f \Upsilon \in \R^{3\times 3 \times 3 },  \, \f \Gamma \in \R^{3\times 3 \times 3}  .
\end{align*}
The associated norms are all denoted by $| \cdot |$, where also the norms of tensors of higher order are denoted in the same way 
\begin{align*}
| \f \Lambda|^2 := \sum_{i,j,k,l=1}^3 
\f \Lambda_{ijkl}^2\,,\quad\text{for }\f \Lambda \in \R^{3^4} \quad 
\text{and }\quad| \f \Theta |^2  := \sum_{i,j,k,l,m,n=1}^3 \f \Theta ^2_{ijklmn}\,,\quad\text{for } 
\f \Theta \in \R^{3^6}\,
\end{align*}
respectively.
Similar, we define the products of tensors of different order.
The product of a tensor of third order and a matrix and a  vector is defined by
\begin{align*}
\f \Gamma : \f A := \left [ \sum_{j,k=1}^3 \f \Gamma_{ijk}\f A_{jk}\right ]_{i=1}^3\, ,  \,   \f \Gamma \cdot \f A := \left [ \sum_{k=1}^3 \f \Gamma_{ijk}\f A_{kl}\right ]_{i,j,l=1}^3\, ,  \,    \f \Gamma \cdot \f a  := \left [ \sum_{k=1}^3 \f \Gamma_{ijk}\f a_k\right ]_{i,j=1}^3 \, , \, \f \Gamma\in \R^{3 \times 3\times 3 } , \, \f A \in \R^{3\times 3},\,\f a \in \R^3 .
\end{align*}
The product of a tensor of fourth order with a matrix and a vector is defined by
\begin{align*}
\f \Lambda : \f A : =\left [ \sum_{k,l=1} ^3 \f \Lambda_{ijkl} \f A_{kl}\right ]_{i,j=1}^3\,, \, 
\f \Lambda : \f a : =\left [ \sum_{l=1} ^3 \f \Lambda_{ijkl} \f a_{l}\right ]_{i,j,k=1}^3\,, 
 \,    \f \Lambda \in \R^{3^4 },  \,  \f A \in \R^{3\times 3 } \, 
 \f a 
 \in 
 \R^3 .
\end{align*}
The product of tensors of fourth and third order is given by
\begin{align*}
\f \Lambda  : \f \Gamma : =\left [ \sum_{k,l=1} ^3 \f \Lambda_{ijkl} \f \Gamma _ {klm}\right ]_{i,j,m=1}^3 , \, \f \Lambda  \dreidots \f \Gamma : =\left [ \sum_{j,k,l=1} ^3 \f \Lambda_{ijkl} \f \Gamma_{jkl}\right ]_{i=1}^3, \,    \f \Lambda \in \R^{3^4},  \, \f  \Gamma \in \R^{3\times 3\times 3}  .
\end{align*}
The product of a tensor of fourth order and a matrix or a tensor of third order is defined via
\begin{align*}
 \f A : \f \Theta : ={}& \left [ \sum_{i,j=1} ^3 \f A_{ij} \f \Theta_{ijklmn}  \right ]_{k,l,m,n=1}^3 , \, \f \Theta \dreidots \f \Gamma : = \left [ \sum_{l,m,n=1} ^3 \f \Theta_{ijklmn} \f \Gamma_{lmn}\right ]_{i,j,k=1}^3 ,  \,  
  \f \Theta \in \R^{3^6},\f A \in \R^{3\times 3} ,\f \Gamma \in \R^{3\times 3 \times 3}\,.
\end{align*}
The product of a vector and a tensor of fourth order is defined differently. The definition is adjusted to the cases of this work:
 \begin{align*}
\f a \cdot \f \Theta :={}& \left [ \sum_{k=1} ^3 \f a_{k} \f \Theta_{ijklmn}  \right ]_{i,j,l,m,n=1}^3,
\, 
  \f \Theta \in \R^{3^6},\f a \in \R^{3} \,.
\end{align*}
The standard matrix and matrix-vector multiplication is written without an extra sign for bre\-vi\-ty,
$$\f A \f B =\left [ \sum _{j=1}^3 \f A_{ij}\f B_{jk} \right ]_{i,k=1}^3 \,, \quad  \f A \f a = \left [ \sum _{j=1}^3 \f A_{ij}\f a_j \right ]_{i=1}^3\, , \quad  \f A \in \R^{3\times 3},\,\f B \in \R^{3\times3} ,\, \f a \in \R^3 .$$
The outer vector product is given by
 $\f a \otimes \f b := \f a \f b^T = \left [ \f a_i  \f b_j\right ]_{i,j=1}^3$ for two vectors $\f a , \f b \in \R^3$ and by $ \f A \o \f a := \f A \f a ^T = \left [ \f A_{ij}  \f a_k\right ]_{i,j,k=1}^3 $ for a matrix $ \f A \in \R^{3\times 3} $ and a vector $ \f a \in \R^3$. 
The symmetric and skew-symmetric parts of a matrix are given by 
$\f A_{\sym}: = \frac{1}{2} (\f A + \f A^T)$ and 
$\f A _{\skw} : = \frac{1}{2}( \f A - \f A^T)$, respectively ($\f A \in \R^{3\times  3}$).
For the product of two matrices $\f A, \f B \in \R^{3\times 3 }$, we observe
 \begin{align*}
 \f A: \f B = \f A : \f B_{\sym}\,, \quad \text{if } \f A^T= \f A\quad \text{and}\quad
  \f A: \f B = \f A : \f B_{\skw}\,, \quad \text{if } \f A^T= -\f A\, .
 \end{align*}
Furthermore, it holds $\f A^T\f B : \f C = \f B : \f A \f C$ for
$\f A, \f B, \f C \in \R^{3\times 3}$ and
$ \f a\otimes \f b : \f A = \f a \cdot \f A \f b$ for
$\f a, \f b \in \R^3$, $\f A \in \R^{3\times 3 }$  and hence $ \f a \otimes \f a : \f A = \f a \cdot \f A \f a =  \f a \cdot \f A_{\sym} \f a$.

We use  the Nabla symbol $\nabla $  for real-valued functions $f : \R^3 \to \R$, vector-valued functions $ \f f : \R^3 \to \R^3$ as well as matrix-valued functions $\f A : \R^3 \to \R^{3\times 3}$ denoting
\begin{align*}
\nabla f := \left [ \pat{f}{\f x_i} \right ] _{i=1}^3\, ,\quad
\nabla \f f  := \left [ \pat{\f f _i}{ \f x_j} \right ] _{i,j=1}^3 \, ,\quad
\nabla \f A  := \left [ \pat{\f A _{ij}}{ \f x_k} \right ] _{i,j,k=1}^3\, .
\end{align*}
 The divergence of a vector-valued and a matrix-valued function is defined by
\begin{align*}
\di \f f := \sum_{i=1}^3 \pat{\f f _i}{\f x_i} = \tr ( \nabla \f f)\, , \quad  \di \f A := \left [\sum_{j=1}^3 \pat{\f A_{ij}}{\f x_j}\right] _{i=1}^3\, .
\end{align*}

 Additionally, we abbreviate $\nabla \nabla $ and $\di \di $ by $\nabla^2 $ and $\nabla ^2 :$ respectively.
For a given tensor of fourth order, we abbreviate the associated second order operator by 
$\Delta_{\f \Lambda}\f d : =\di \f \Lambda : \nabla \f d  $ acting on functions $\f d \in \C^1(\Omega \times [0,T];\R^3)$.

Throughout this paper, let $\Omega \subset \R^3$ be a bounded domain of class $\C^{3,1}$.
We rely on the usual notation for spaces of continuous functions, Lebesgue and Sobolev spaces. Spaces of vector-valued functions are  emphasised by bold letters, for example
$
\f L^p(\Omega) := L^p(\Omega; \R^3)$,
$\f W^{k,p}(\Omega) := W^{k,p}(\Omega; \R^3)$.
The standard inner product in $L^2 ( \Omega; \R^3)$ is just denoted by
$ (\cdot \, , \cdot )$, in $L^2 ( \Omega ; \R^{3\times 3 })$
by $(\cdot ; \cdot )$, and in $L^2 ( \Omega ; \R^{3\times 3\times 3 })$ by   $(\cdot \dreidotkom \cdot )$.

The space of smooth solenoidal functions with compact support is denoted by $\mathcal{C}_{c,\sigma}^\infty(\Omega;\R^3)$. By $\f L^p_{\sigma}( \Omega) $, $\V(\Omega)$,  and $ \f W^{1,p}_{0,\sigma}( \Omega)$, we denote the closure of $\mathcal{C}_{c,\sigma}^\infty(\Omega;\R^3)$ with respect to the norm of $\f L^p(\Omega) $, $ \f H^1( \Omega) $, and $ \f W^{1,p}(\Omega)$ respectively.

The dual space of a Banach space $V$ is always denoted by $ V^*$ and equipped with the standard norm; the duality pairing is denoted by $\langle\cdot, \cdot \rangle$. The duality pairing between $\f L^p(\Omega)$ and $\f L^q(\Omega)$ (with $1/p+1/q=1$), however, is denoted by $(\cdot , \cdot )$, $( \cdot ; \cdot )$, or $( \cdot \dreidotkom \cdot )$. 

The unit ball in d dimensions is denoted by $B_d:= \{ \f x \in \R^d ; | \f x | < 1\}$ and the sphere in $d$-dimensions by  $\Se^{d-1}:= \{ \f x \in \R^d ; | \f d |=1  \}$.

For $Q\subset \R^d$, the Radon measures are denoted by $\mathcal{M}(Q)$, the positive Radon measures by $\mathcal{M}^+(Q)$, and probability measures by $\mathcal{P}(Q)$. We recall that the Radon measures equipped with the total variation are a Banach space and  for compact sets Q, it can be characterized by~$\mathcal{M}(Q) = ( \C(Q))^*$ (see~\cite[Theorem~4.10.1]{edwards}). 
The integration of a function $f\in \C(Q)$ with respect to a measure $\mu\in \mathcal{M}(Q)$ is denoted by $ \int_Qf(\f h ) \mu(\de \f h)\,.$ In case of the Lebesgue measure we just write 
$ \int_Qf(\f h ) \de \f h\,.$

The cross product of two vectors is denoted by $\times $. We introduce the notation $ \rot{\cdot}$, which is defined via
\begin{align}
\rot{\cdot } : \R^d \ra \R^{d\times d}\, , \quad \rot{ \f h} := \begin{pmatrix}
0& - \f h_3 &\f h_2\\
\f h_3 & 0 & - \f h_1 \\
- \f h_2 & \f h_1 & 0
\end{pmatrix}\, .
\end{align}
The $i$-th component of the vector $\f h\in \R^3$ is denoted by $\f h_i$. 
The mapping $\rot{\cdot}$ has some nice properties, for instance
\begin{align*}
\rot{\f a}\f b = \f a \times \f b \, ,\quad \rot{\f a} ^T \rot{\f b} = (\f a \cdot \f b) I - \f b \otimes \f a\, ,
\end{align*}
for all $\f a$, $\f b \in \R^3$, where $I$ denotes the identity matrix in $\R^{3\times 3}$ or 
\begin{align*}
 \quad \rot{\f a} : \nabla \f b = \rot{\f a} : \sk b = \f a \cdot \curl \f b \, , \quad \di \rot{ \f a} = - \curl \f a \, , \quad \frac{1}{2} \rot{\curl \f a} = \sk a \,,
\end{align*}
for all $ \f a, \f b \in \C^1(\ov \Omega)$.
Displaying the cross product by this matrix makes the operation associative. 

Additionally, we define $ \rott{\cdot } : \R^{3 \times 3} \ra \R^3 $, it is the left inverse of $\rot{\cdot} $ and given by
\begin{align*}
\rott{\f A} : = \begin{pmatrix}
\f A_{3,2} \\ \f A_{1,3} \\ \f A_{2,1}
\end{pmatrix}\,,\quad \text{for all } \f A \in \R^{3\times 3}\,.
\end{align*} 
For this mapping holds $ \rott{\rot{\f a}} = \f a $ and, thus $ 2 \rott{\sk{ a}}= \curl \f a$, for all $ \f a \in \C^1(\ov{ \Omega }; \R^3)$. 

We also use the Levi--Civita tensor $\f \Upsilon\in \R^{3^3}$. Let $\mathfrak{S}_3$ be the symmetric group of all permutations of $(1,2,3)$. The sign of  a given permutation  $\sigma \in \mathfrak{S}_3$ is denoted by $\sgn \sigma $.
The Tensor~$\f \Upsilon$ is defined via
\begin{align*}
\f \Upsilon_{ijk}:= \begin{cases}
\sgn{\sigma},  &  ( i ,j,k) = \sigma( 1,2,3)\text{ with } \sigma\in \mathfrak{S}_3 ,\\ 
0, & \text{ else}\, .
\end{cases}
\end{align*}
This tensor allows it two write the cross product as 
\begin{align*}
\f a \times \f b = \f \Upsilon :(\f a \otimes \f b) =\f \Upsilon _{ijk} \f a_j \f b _k\, ,\quad \text{for all }\f a , \f b \in \R^d \, 
\end{align*}
and the curl via
\begin{align*}
\curl \f d = \f\Upsilon_{ijk} \partial_j \f d_k \,, \quad\text{for all }\f d \in \C^1 ( \Omega)\, .
\end{align*}

For a given Banach space $ V$, Bochner--Lebesgue spaces are denoted  by $ L^p(0,T; V)$. Moreover,  $W^{1,p}(0,T; V)$ denotes the Banach space of abstract functions in $ L^p(0,T; V)$ whose weak time derivative exists and is again in $ L^p(0,T; V)$ (see also
Diestel and Uhl~\cite[Section~II.2]{diestel} or
Roub\'i\v{c}ek~\cite[Section~1.5]{roubicek} for more details).
By  $\C([0,T]; V) $, and $ \C_w([0,T]; V)$, we denote the spaces of abstract functions mapping $[0,T]$ into $V$ that are absolutely continuous, continuous, and continuous with respect to the weak topology in $V$, respectively.
We often omit the time interval $(0,T)$ and the domain $\Omega$ and just write, e.g., $L^p(\f W^{k,p})$ for brevity.

Finally, by $c>0$, we denote a generic positive constant and by $C_\delta$ a constant depending on $\delta$.

\section{Model and main result\label{sec:model}}
\subsection*{Governing equations}
Let $ \Omega$ be of class $\C^{3,1}$.
We consider the Ericksen--Leslie model as introduced in~\cite{masswertig}. 
 The governing equations  read as
\begin{subequations}\label{eq:strong}
\begin{align}
\t {\f v}  + ( \f v \cdot \nabla ) \f v + \nabla p + \di \f T^E- \di  \f T^L&= \f g, \label{nav}\\
\f d \times \left (\t {\f d }+ ( \f v \cdot \nabla ) \f d -\sk{v}\f d + \lambda \sy{v} \f d + \f q\right ) & =0,\label{dir}\\
\di \f v & = 0,
\\
| \f d |&=1.
\end{align}%
\end{subequations}

We recall that $\f v : \ov{\Omega}\times [0,T] \ra \R^3$ denotes the velocity  of the fluid, $\f d:\ov{\Omega}\times[0,T]\ra \R^3$ represents the orientation of the rod-like molecules, and $p:\ov{\Omega}\times [0,T] \ra\R$ denotes the pressure.
The Helmholtz free energy potential~$F$, which is described rigorously in the next section, is assumed to depend only on the director and its gradient, $F= F( \f d, \nabla \f d)$.
The free energy functional~$\mathcal{F}$  is defined by
\begin{align*}
\mathcal{F}: \He \ra \R , \quad \mathcal{F}(\f d):= \int_{\Omega} F( \f d, \nabla \f d) \de \f x \,,
\end{align*}
and $\f q$ is its variational derivative (see Furihata and Matsuo~\cite[Section 2.1]{furihata}),
\begin{subequations}\label{abkuerzungen}
\begin{align}\label{qdefq}
\f q :=\frac{\delta \mathcal{F}}{\delta \f d}(\f d) =  \pat{F}{\f d}(\f d , \nabla\f d)-\di \pat{F}{\nabla \f d}(\f d, \nabla \f d)\, .
\end{align}
The Ericksen stress tensor $\f T^E$ is given by
\begin{equation}
\f T^E = \nabla \f d^T \pat{F}{\nabla \f d}( \f d , \nabla\f d ) \, .\label{Erik}
\end{equation}
The Leslie tensor is given by
\begin{align}
\begin{split}
\f T^L ={}&  \mu_1 (\f d \cdot \sy{v}\f d )\f d \otimes \f d +\mu_4 \sy{v}
 + {(\mu_5+\mu_6)} \left (  \f d \otimes\sy{v}\f d \right )_{\sym}
\\
& +{(\mu_2+\mu_3)} \left (\f d \otimes \f e  \right )_{\sym}
 +\lambda \left ( \f d \otimes \sy{v}\f d  \right )_{\skw} + \left (\f d \otimes \f e  \right )_{\skw}\, ,
\end{split}\label{Leslie}
\end{align}
where
\begin{align}
\f e : = \t {\f d} + ( \f v \cdot \nabla ) \f d - \sk v\f d\, .\label{e}
\end{align}

We emphasise that Parodi's law, 
\begin{equation}
 \lambda = \mu_2 + \mu _3\,,\label{parodi}
\end{equation}
 is neither essential for the reformulation nor the existence of measure-valued solutions, but is essential to prove the existence of \textit{suitable} measure-valued solutions (see Remark~\ref{rem:ex}) for which the weak-strong uniqueness property holds.

To ensure the dissipative character of the system, we assume that
\begin{align}
\begin{gathered}
\mu_1  > 0, \quad \mu_4 > 0, \quad (\mu_5+\mu_6)- \lambda (\mu_2+\mu_3)>0,\quad  \mu_1 +  \lambda (\mu_2+\mu_3)>0  \, ,
\\
4  \big( (\mu_5+\mu_6)- \lambda (\mu_2+\mu_3)\big)>
\big((\mu_2+\mu_3) -\lambda\big)^2\,.
\end{gathered}\label{con}
\end{align}
\end{subequations}
Finally, we impose boundary and initial conditions as follows:
\begin{subequations}\label{anfang}
\begin{align}
\f v(\f x, 0) &= \f v_0 (\f x) \quad\text{for } \f x \in \Omega ,
&\f v (  \f x, t ) &= \f 0  &\text{for }( t,  \f x ) \in [0,T] \times \partial \Omega , \\
\f d (  \f x, 0 ) & = \f d_0 ( \f x) \quad\text{for } \f x \in \Omega ,
&\f d (  \f x ,t ) & = \f d_1 ( \f x )  &\text{for }( t,  \f x ) \in [0,T] \times \partial \Omega .
\end{align}
\end{subequations}
We always assume that $\f d_1= \f d_0$ on $\partial \Omega$, which is a compatibility condition providing regularity.

\subsection*{The general Oseen--Frank energy}
The \textit{Oseen--Frank} energy is given by~(see~Leslie~\cite{leslie}) \
\begin{align*}
F(\f d , \nabla \f d) := \frac{K_1}{2} (\di \f d )^2 +\frac{K_2}{2}( \f d \cdot \curl \f d )^2  + \frac{K_3}{2} |\f d \times \curl \f d|^2 \,,
\end{align*}
where $K_1,K_2,K_3>0$.
This energy can be reformulated using the norm one restriction, to
\begin{align}
\begin{split}
2 F( \f d , \nabla \f d)&:= k_1 ( \di \f d) ^2 +  k_2 | \curl \f d |^2 + k_3 | \f d |^2 ( \di \f d )^2 +   k_4 ( \f d\cdot \curl \f d )^2  +  k_5 | \f d \times \curl \f d |^2 \, .
\end{split} \label{frei}
\end{align} 
where $ k_1=k_3=K_1/2$, $k_2={\min\{K_2,K_3\}}/{2}$, $k_4 = K_2-k_2$, and $ k_5 =K_3-k_2$ are again positive constants.
We remark that $| \f d |^2| \curl \f d |^2 = ( \f d \cdot \curl \f d )^2 + | \f d \times \curl \f d |^2 $.

We introduce short notations for the derivatives of the free energy~\eqref{frei} with respect to $\nabla \f d$ and $ \f d$. The free energy~\eqref{frei} can be seen as a function $F: \R^3 \times \R^{3\times 3}\ra \R $, where we replace $\f d$ in definition~\eqref{frei} by $\f h\in \R^3$ and $\nabla \f d $ by $ \f S\in \R^{3\times 3 }$. By  an easy vector calculation, we find
\begin{align*}
2 F( \f h, \f S)  &=  k_1 \tr (\f S) ^2 + k_2 | ( \f S)_{\skw}|^2  + k_3 | \f h |^2 \tr ( \f S)^2  + k_4 ( \rot{ \f h } : ( \f S) _{\skw})^2 + \\&\quad 4k_5| (\f S)_{\skw} \f  h|^2 \, ,
\end{align*}
see Section~\ref{sec:not} for the definition of the matrix $\rot{\cdot}$.

We abbreviate the derivative of $F$ with respect to $\f h$ by $F_{\f h}$ and the derivative with respect to $\f S$ by $F_{\f S}$, where
\begin{align*}
F_{\f S} : \R^3 \times \R^{3\times 3 } \ra \R^{3\times 3 } \, , \quad \text{and  } F_{\f h} : \R^3 \times \R^{3\times 3 } \ra \R^3 \, .
\end{align*}
These derivatives are given by
\begin{align}
\begin{split}
F_{\f S}(\f h ,\f S) & =   k _1 \tr (\f S) I + k_2 (\f S)_{\skw}  + k_3 \tr( \f S) | \f h|^2 I  +  k_4  \rot{ \f h}( \rot{\f h} :(\f S)_{\skw})\\& \quad  + 4 k_5 ((\f S)_{\skw}\f h \otimes \f h ) _{\skw} \\
F_{\f h} ( \f h , \f S) &= k_3 \tr(\f S)^2 \f h + 2 k_4 ( \rot{\f h} :(\f S)_{\skw}) \rott{( \f S)_{\skw}} + 4 k_5 ( \f S)^T_{\skw}(\f S) _{\skw} \f h 
\, ,
\end{split}\label{FSFh}
\end{align}
see Section~\ref{sec:not} for the definition of $\rott{\cdot}$.

To abbreviate, we define the tensor of order 4 $\f \Lambda \in \R^{3^4}$ and a tensor of order 6 $ \f \Theta \in \R^{3^6}$ via
\begin{align}
\f \Lambda_{ijkl} &: ={} k_1 \f \delta_{ij} \f \delta_{kl} + k_2 ( \f \delta_{ik}\f \delta_{jl}-\f\delta_{il}\f\delta_{jk})\,,\label{Lambda}
\intertext{and}
\f \Theta_{ijklmn} &:={}k_3 \f \delta_{ij}\f \delta_{lm}\f \delta_{kn} 
%
%
+ k_5  \left (  \f \delta_{il}\f \delta_{mn}\f \delta_{jk} - \f \delta_{mi}\f \delta_{ln}\f \delta_{jk} - \f \delta_{lj}\f \delta_{mn}\f \delta_{ik} + \f \delta_{jm}\f \delta_{ln}\f \delta_{ik}  \right )\notag
\\
& 
+k_4 \left (  \f \delta_{kn}\f \delta_{jm}\f \delta_{il} + \f \delta_{km}\f \delta_{jl}\f \delta_{in} + \f \delta_{kl}\f \delta_{jn}\f \delta_{im} - \f \delta_{kn}\f \delta_{jl}\f \delta_{im}- \f \delta_{km}\f \delta_{jn}\f \delta_{il} - \f \delta_{kl}\f \delta_{jm}\f \delta_{in}  \right )
\,, \label{ThetaOF}
\end{align}
respectively.
The free energy can be written as 
\begin{align}
2 F(\f d, \nabla \f d ) = \nabla \f d : \f \Lambda : \nabla \f d +  \nabla \f d \otimes  \f d  \dreidots \f \Theta \dreidots  \nabla \f d \otimes \f d  \,. \label{tensoren}
\end{align}
The partial derivatives~\eqref{FSFh} inserted in definition~\eqref{qdefq} 
gives the variational derivative in the case of the Oseen--Frank energy via
\begin{align}
\begin{split}
\f q ={}&- k_1 \nabla \di \f d - k_2 \curl \curl \f d - k_3\nabla (\di \f d | \f d|^2) - k_4 \di \left ( \rot{\f d} ( \f d \cdot \curl \f d ) \right ) - 4 k_5 \di \left (  ( \nabla \f d)_{\skw} \f d \o \f d\right )_{\skw}\\& + k_3 (\di \f d)^2 \f d  + k_4 ( \f d \cdot \curl \f d) \curl \f  d + 4 k_5  ( \nabla \f d)_{\skw}^T ( \nabla \f d)_{\skw}\f d
\\={}&-  \Lap\f d - \di \left ( \f d \cdot \f \Theta \dreidots \nabla \f d \o \f d \right ) + \nabla \f d : \f \Theta \dreidots \nabla \f d \o \f d
 \,.
\end{split}
\label{qdef}
\end{align}
 
The Tensor $\f \Lambda$ is strongly elliptic, i.e.~there is a $\eta>0$ such that $ \f a \otimes  \f b : \f \Lambda : \f a \otimes \f b   \geq \eta | \f a|^2 | \f b|^2 $ for all $\f a, \f b \in \R^3$. Indeed, it holds 
\begin{align*}
 \f a \otimes  \f b : \f \Lambda : \f a \otimes \f b  = k_1 (\f a \cdot \f b)^2 + k_2 ( | \f a |^2 | \f b|^2-( \f a \cdot \f b )^2 ) \geq \min\{k_1,k_2\} | \f a |^2 | \f b|^2\,.
\end{align*}

In the next section, we introduce the concept of measure-valued solutions. The proof of existence of measure-valued solutions to the Ericksen--Leslie model equipped with the Oseen--Frank energy is done by the author in~\cite{masswertig}. We also refer to~\cite{masswertig} for a more extensive introduction into the concept of generalized gradient Young measures.
\subsection*{Measure-valued solutions}
\begin{defi}[measure-valued solutions]\label{def:meas}
The tupel $( ( \f v ,\f d ) , ( \nu^o,m , \nu^\infty) , ( \mu , \nu^\mu))$ consisting of the pair $(\f v , \f d)$ of velocity field $\f v$ and director field  $\f d$, the generalized gradient Young measure $(\mu, \nu^\mu)$ and the defect measure $(\mu , \nu^\mu)$ (see below)  is  said to be a measure-valued solution to~\eqref{eq:strong} if
\begin{align}
\begin{split}
\f v &\in L^\infty(0,T;\Ha)\cap  L^2(0,T;\V)
\cap W^{1,2}(0,T; ( \f W^{1,3}_{0,\sigma}(\Omega))^*),
\\ \f d& \in L^\infty(0,T;\He)\cap   W^{1,2}  (0,T; (  \f  L^{\nicefrac{3}{2}}) ),\\
\{\nu^o _{(\f x,t)}\}&  \subset \mathcal{P} ( \R^{3\times 3})\,, \text{ a.\,e.~in $\Omega\times (0,T)$} \, ,\\
 \{m_t\} &\subset \mathcal{M}^+(\ov\Omega)\,,\text{ a.\,e.~in $ (0,T)$} \, ,  \\
 \{\nu^\infty _{(\f x,t)}\} &\subset \mathcal{P}(\ov B_3\times \Se^{3^2-1})\,,\text{ $m_t $-a.\,e.~in }\ov\Omega \text{ and a.\,e.~in }   (0,T)\, ,\\
 \{\mu_t\} &\subset \mathcal{M}^+(\ov\Omega)\,,\text{ a.\,e.~in $ (0,T)$} \, ,  \\
\{ \nu^{\mu}_{(\f x ,t)}\}&\subset \mathcal{P}(\Se^{3^3-1})\,, \text{ $\mu_t$-a.\,e.~in }\ov\Omega \text{ and a.\,e.~in }   (0,T)\, 
\end{split}\label{measreg}
\end{align}
and if
\begin{subequations}\label{meas}
\begin{align}
\begin{split}
&\int_0^T (\t \f v(t), \f \varphi(t)) \de t + \int_0^T ((\f v(t)\cdot \nabla) \f v(t), \f \varphi(t)) \de t  - \intte{ \ll{\nu_t,\f S^T F_{\f S}(\f h, \f S):\nabla \f \varphi(t)   }  }
\\&-2 \int_0^T \ll{\mu_t, \f \Gamma \dreidots (\f \Gamma\cdot \nabla\f \varphi(t))}\de t + \intte{(\f T^L(t): \nabla \f \varphi(t) ) } =\intte{ \left \langle \f g (t),\f \varphi(t)\right \rangle }\, ,\quad
\end{split}\label{eq:velo}
\intertext{as well as}
\begin{split}
&\intte{\left ( \f d (t) \times\left ( \t \f d(t)+( \f v(t)\cdot \nabla ) \f d(t) -  \sk{v(t)}\f d(t) + \lambda \sy{v(t)} + \f q\right ), \f \psi(t)\right )}=0
\end{split}
\label{eq:mdir}
%
\intertext{with}
\begin{split}
&\intte{\left ( \f d (t) \times \f q , \f \psi(t)\right )} ={} 
 \intte{(\rot{\f d(t)}  F_{ \f S} ( \f d(t) , \nabla \f d (t)) ; \nabla \f \psi(t) )} \\ &  \hspace{3.5cm}+\intte{ \ll{\nu_t, \f \Upsilon :\left (\f  S    (F_{\f S}(\f h, \f S))^T\right ) \cdot\f \psi(t)   }}+\intte{ \ll{\nu_t, \left (\f h \times F_{\f h}(\f h, \f S)\right ) \cdot\f \psi(t)   }}\,, \quad
\end{split}
\label{eq:q}
\end{align}%
\end{subequations}
holds for all $ \f \varphi \in \mathcal{C}_c^\infty(\Omega \times ( 0,T);\R^3))$ with $ \di \f \varphi =0$ and $ \f \psi \in  \mathcal{C}_c^\infty(\Omega \times ( 0,T);\R^3))$, respectively.
Additionally, the norm restriction of the director holds, i.\,e. $|\f d (\f x,t)|=1$ for a.\,e. $(\f x, t)\in \Omega\times (0,T)$, the oscillation measure of a linear function is the gradient of the director
\begin{align}
  \int_{\R^{3\times 3} } \f S  \nu^o_{(\f x, t)} ( \de \f S)   = \nabla \f d(\f x,t) \, \quad \text{for a.e. } (\f x ,t) \in \Omega\times (0,T)\,,\label{identify}
\end{align}
and the
initial conditions $( \f v_0, \f d_0)\in \Ha \times \Hc$ with $ \f d_0 \in \Hrand{7}$ shall be fulfilled in the weak sense and the boundary conditions in the sense of the trace.
We remark that the trace is well defined for the function $\f d \in L^\infty(0,T;\He)$, which is the expected value of the oscillation measure $\nu^o$. 

The dual pairings are defined as
\begin{align*}
 \ll{\mu_t,f} :={}& \int_{\ov \Omega} \int_{\Se ^{d^3-1}}  \sum_{i,j=1}^3f(\f \Gamma) \nu^\mu_{(\f x ,t)} ( \de \f \Gamma) \mu_t(\de \f x )\,\intertext{ for $f \in \C(\Se^{3^3-1};\R) $  and}
\ll{\nu_t, f } :={}& 
 \int_{\Omega} \int_{\R^{d\times d} } f(\f x, \f d(\f x, t), \f S)  \nu^o_{(\f x, t)} ( \de \f S)\de \f x \\&+ \int_{\ov\Omega}\int_{\Se^{d^2-1} \times \ov B_d} \tilde{f} (\f x,  \tilde{\f h} , \tilde{\f S}) \nu_{(\f x, t)}^\infty ( \de \tilde{\f S}, \de \tilde{\f h}) m_t (\de \f x)
\end{align*}
for $f\in \mathcal{R}$ (see~\eqref{transi} below).
\end{defi}
We refer to the section~\ref{sec:not} for the definition of the tensor $\f \Upsilon$ and to \eqref{transi} for the definition of the transformed function~$\tilde{f}$.
\begin{rem}
We often abuse  the notation by writing $ \ll{\nu_t , f(\f h , \f S)}$. Thereby we mean the generalized Young measure applied to the continuous function $(\f h, \f S)\mapsto f (\f h ,\f S)$. 
\end{rem}
We refer to the section~\ref{sec:not} for the definition of the tensor $\f \Upsilon$.
The transformed function $\tilde{f}:\ov \Omega  \times  B_d \times B_{d \times d}\ra \R$, the so-called recession function is given by
\begin{align}
\tilde{f} ( {\f x }, \tilde{\f h} ,\tilde{\f  S} ) : = 
f ( \f x , \frac{\tilde{\f h}}{\sqrt{1-|\tilde{\f h}|^2}}, \frac{\tilde{\f S}}{\sqrt{1-|\tilde{\f S}|^2}}))  ( 1-|\tilde{ \f h}|^2  )( 1- | \tilde{\f S}|^2) \,. \label{transi}
\end{align}
The class of function for which the above representation is valid are those functions, $f\in \C(\ov \Omega  \times \R^d \times \R^{d\times d})$ such that $\tilde{f}$ admits a continuous extension on the closure of its domain
\begin{align*}
\mathcal{F}:= \Big \{ f \in \C ( \ov \Omega \times [0,T] \times \R^d \times \R^{d\times d }  ) |& \exists \tilde{ g}\in \C(\ov \Omega \times [0,T] \times \ov B_d\times \ov B_{d\times d}\, ;\\ &\, \tilde{f}= \tilde{g}\text{ on }\ov \Omega \times [0,T] \times B_d \times B_{d\times d}  	\Big \}\,.
\end{align*} 
An straightforward calculation shows that function with quadratic growth in $\f h$ and $\f S$ is unchanged by the transformation~\eqref{transi}.
Most of the appearing terms in the above definition have this structure. This implies that the transformation of $\f h \times F_{\f h}(\f h ,\f S)$ remains the function itself. Only the linear terms in $F_{\f S}$ are changed by multiplying them with $1-|\tilde{\f h}|^2$, such that for example
\begin{align*}
\widetilde{\f S ^T F_{\f S}}(\tilde{\f h}, \tilde{\f S})= \tilde{\f S}^T F_{\f S}(\tilde{\f h}, \tilde{\f S})- k_1 | \tilde{ \f h }|^2 \tr{(\tilde{\f S})}\tilde{\f S}^T - k_2| \tilde{\f h}|^2 \tilde{\f S}^T ( \tilde{\f S})_{\skw}\,.
\end{align*}
\begin{rem}
We often use some abuse of the notation by writing $ \ll{\nu_t , f(\f h , \f S)}$. Thereby we mean the generalized Young measure applied to the continuous function $(\f h, \f S)\mapsto f (\f h ,\f S)$. 
\end{rem}
In comparison to weak solutions (see~\cite{unsere}) The Ericksen stress $\f T^E$ and the variational derivative $\f d \times \f q$ are in this measure-valued formulation represented by generalized Young measures. 
A \textbf{generalized Young measure} on $\ov \Omega \times [0,T]$ with values in $\R^d \times \R^{d\times d}$ is a triple $( \nu_{(\f x,t)}^o , m_t , \nu_{(\f x,t)}^\infty) $ consisting of  
\begin{itemize}
\item a parametrized family of probability measures $\{\nu_{(\f x,t)}^o\} \subset \mathcal{P}(\R^{d\times d})$ for a.\,e. $(\f x,t)  \in \Omega \times (0,T)$,
\item a positive measure $\{m_t\}\subset \M^+(\ov \Omega)$, for a.\,e. $t\in (0,T)$ and
\item a parametrized family of probability measures $\{ \nu_{(\f x,t)}^\infty\}\subset \mathcal{P}( \ov B_d \times \Se^{d^2-1})$, for $m_t$-a.\,e.~$\f x  \in \ov\Omega $ and a.\,e.~$t\in(0,T)$. 
\end{itemize}
As in~\cite[page 552]{rindler} we call $\nu^o$ \textit{oscillation measure}, $m_t$ \textit{concentration measure} and $\nu^\infty$ the \textit{concentration angle measure}. 

\begin{defi}
\label{def:gradmeas}
A generalized Young measure is said to be a generalized gradient Young measure, if there exists a sequence of Functions $\{ \f d _k\}\subset L^\infty(0,T; \Hc)$ with $ \sup_k \left \| \nabla \f d_k(|\f d_k|+1)\right \|_{L^\infty(\Le)}<\infty$ and a function $\f d\in L^\infty(0,T;\He)$  such that the convergence
\begin{align*}
\int_{\Omega} f( \f x, t , \f d_{k} ( \f x,t ) , \nabla \f d_{k} ( \f x,t) ) \de \f x \ra \ll{\nu_t,f  }\, , 
\end{align*}
 and formula~\eqref{identify} holds for all functions $f\in\F$ and \eqref{identify} holds for a.e.~$(\f x ,t) \in \Omega \times (0,T)$ .
\end{defi}
\begin{rem}\label{rem:boundary}
The approximating sequence $\f d_k$ can be chosen such that it fulfills the prescribed Dirichlet boundary conditions (compare to \cite[Thm.~2.1]{masswertig}).

\end{rem}
In the case of the Ericksen stress, an additional defect measure is of need to describe the limit of the regularised system we considered in~\cite{unsere}.
A \textbf{defect measure} on $\ov \Omega \times [0,T]$ with values in $ \R^{d\times d\times d }$
is a pair $(\mu_t, \nu^\mu )$ consisting of 
\begin{itemize}
\item a positive measure $\mu_t\in\M^+(\ov\Omega)$, for a.\,e. $t\in (0,T)$ and
\item a parametrized family of probability measures $\{ \nu_y^\mu\}_{y\in \ov Q} \in \mathcal{P}(  \Se^{d^3-1})$, for $\mu_t$-a.\,e.~$\f x \in \ov \Omega$ and a.\,e.~$t\in(0,T)$. 
\end{itemize}
We refer to~\cite{masswertig} for more details on the convergence in the sense of generalized Young measures.

In order to prove the weak-strong uniqueness property, it is needed that the measure-valued solution fulfills an additional assumption, it has to satisfy an energy inequality.  
\begin{defi}[Suitable measure-valued solutions]
A measure-valued solution is said to be a suitable measure-valued solution if it fulfills Definition~\ref{def:meas} and additionally the energy inequality
\begin{align}
\begin{split}
 &\frac{1}{2}\|\f v (t)\|_{\Le}^2 + \ll{\nu_t, F} + \frac{1}{2}\ll{\mu_t , 1 }   + \inttet{(\mu_1+\lambda(\mu_2+\mu_3))\|\f d\cdot \sy{v}\f d\|_{L^2}^2 }  \\
& +\inttet{ \left [
  \mu_4 \|\sy{v}\|_{\Le}^2+( \mu_5+\mu_6-\lambda(\mu_2+\mu_3))\|\sy{v}\f d\|_{\Le}^2  +  \|\f d \times \f q\|_{\Le}^2\right ]}
\\
& \qquad\qquad \leq  \left ( \frac{1}{2}\|\f v_0 \|_{\Le}^2 + \F( \f d_0)\right )
 + \inttet{\left [\langle \f g , \f v \rangle 
 +(  ( \mu_2+ \mu_3) - \lambda ) \left (\f d \times  \f q , \f d \times \sy{v} \f d \right ) 
 \right ]}\, .
\end{split}
\label{energyin}
\end{align}
a.e. in $(0,T)$.

\end{defi}
\begin{rem}[Existence of suitable weak solutions]
\label{rem:ex}
In our recent work, we proved the existence of measure-valued solutions to the system~\eqref{eq:strong} in the sense of Definition~\ref{def:meas}. 
The proof relies on the existence of weak solutions $(\f v_\delta, \f d_\delta)$ to a regularised system, where the free energy~\eqref{frei} is changed by adding the regularising term $\delta| \Delta \f d |^2$. 
We obtain generalized Young measure-valued solutions for vanishing regularisation. 
 
 In the case of Parodi's relation~\eqref{parodi}, we can prove the following energy inequality for vanishing regularisation (see~\cite{masswertig})
\begin{align}
\begin{split}\label{weaklow}
& \frac{1}{2}\|\fd v(t) \|_{\Le}^2 +  \ll{\nu_t, F} + \frac{1}{2}\ll{\mu_t , 1 } + (\mu_1+\lambda^2)\inttet{\|\fd d\cdot \syd{v}\fd d\|_{L^2}^2} \\
& \quad +\inttet{\left [
  \mu_4 \|\syd{v}\|_{\Le}^2 + ( \mu_5+\mu_6-\lambda^2)\|\syd{v}\fd d\|_{\Le}^2
+
\|\f d \times \f q\|_{\Le}^2 - \langle \f g , \fd v\rangle \right ]} \\
\leq &\frac{1}{2}\| \fd v(0)\|_{\Le}^2 + \frac{\delta}{2}\| \Delta \fd d(0)\|_{\Le}^2 + \F( \f d(0))  
\, .
\end{split}
\end{align}
In the case that Parodi's relation~\eqref{parodi} does not hold, we can not prove the existence of suitable measure-valued solutions.

\end{rem}
\subsection*{Strong solutions and main theorem}
 \begin{defi}[strong solution]\label{def:strong}
A pair $( \vv , \dd )$ is called a strong solution if it fulfills system~\eqref{eq:strong} and exhibits the  regularity
 \begin{align}
 \vv &\in L^\infty(0,T; \Ha)\cap L^2(0,T; \f L^\infty)\cap  L^2 (0,T; \f W^{1,3}_{0,\sigma})\cap L^1(0,T; \f W^{1,\infty})\cap W^{1,2}(0,T;\Vd)  \, ,\notag\\ 
 \dd &\in L^\infty(0,T;\f W^{1,\infty
}) \cap  L^2( 0,T ; \f W^{2,3}) \cap  L^4( 0,T ; \f W^{1,6})\cap W^{1,1 }(0,T;  \f W^{1,3}\cap \f L^\infty )   \, ,\notag
 \intertext{as well as}
 \nabla \vv &\in  \C(\ov \Omega \times [0,T]) \quad \text{and} \quad  \nabla \dd \in \C(\ov\Omega \times [0,T])\,.
\label{strongreg}
 \end{align}
 \begin{rem}
 The continuity assumptions on the solution are especially needed to be able to insert the formulation of the measure-valued solution (see Definition~\ref{def:meas}). This assumptions can presumably be generalized.  The assumptions on the differentiability of $\vv$ with respect to $t$ follows from equation~\eqref{nav} and the other assumptions on $\vv$.
 \end{rem}
 \begin{rem}
 To our best knowledge, there is no existence result in the class of strong solutions for the Ericksen-Leslie model in its full generality. But there are similar results for simpler models, and they can possibly be generalized to the case presented here.
 \end{rem}
 
 \end{defi}
 \begin{theorem}\label{thm:main}
 Let $(\f v , \f d)$ be a suitable measure-valued solution to the Ericksen--Leslie model according to Definition~\ref{def:meas}
and $(\vv,\dd)$ a strong solution according to definition~\ref{def:strong} to the same initial and boundary values.  
 Then both solutions coincide, i.e.~$\f v = \vv $ and $\f d = \dd$ in $\Omega\times (0,T)$ such that 
 \begin{align*}
 \nu^o = \delta_{\nabla \dd} , \quad \mu_t \equiv 0 , \quad \text{and } m_t \equiv 0 ,.
 \end{align*}
 \end{theorem}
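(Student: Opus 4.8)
The plan is to run a relative-energy argument of Dafermos--DiPerna type, but adapted to the nonconvex Oseen--Frank energy. I would work throughout with the relative energy $\mathcal{R}(t)$ introduced in~\eqref{relEn}, which quantifies the discrepancy between the suitable measure-valued solution $(\f v,\f d)$ and the strong solution $(\vv,\dd)$. It collects the kinetic part $\tfrac{1}{2}\|\f v-\vv\|_{\Le}^2$, an elastic part assembled from $\ll{\nu_t,F}$ together with its linearisation around $\dd$, and the defect mass $\tfrac{1}{2}\ll{\mu_t,1}$. Since the leading quadratic form $\nabla\f d:\f\Lambda:\nabla\f d$ is only \emph{strongly elliptic} (i.e.\ controls rank-one tensors $\f a\otimes\f b$) and degenerate on general matrices, while the $\f\Theta$-coupling with $\f d$ in~\eqref{qdef} destroys convexity altogether, the naive convex formula~\eqref{relencon} need not be nonnegative. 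The first self-contained task is therefore to verify, from the definition~\eqref{relEn}, that $\mathcal{R}(t)\ge 0$ and that $\mathcal{R}(t)=0$ is rigid: using the strong ellipticity of $\f\Lambda$, the pointwise constraint $|\f d|=1$, and the nonnegativity of the defect measure, $\mathcal{R}(t)=0$ must force $\f v=\vv$, $\f d=\dd$, $\mu_t\equiv 0$, $m_t\equiv 0$, and $\nu^o=\delta_{\nabla\dd}$, which is precisely the conclusion of Theorem~\ref{thm:main}.

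Next I would derive a relative-energy inequality of the form $\mathcal{R}(t)\le\mathcal{R}(0)+\int_0^t[\dots]\,\de s$. The dissipative left-hand terms are furnished by the energy inequality~\eqref{energyin} for the suitable measure-valued solution, whereas the strong solution contributes through the energy \emph{equality} established in Section~\ref{sec:pre}. The cross terms are produced by testing the weak formulations~\eqref{eq:velo}, \eqref{eq:mdir} and~\eqref{eq:q} against test functions built from the strong solution, namely $\vv$ in the momentum balance and the suitable combination of $\dd$ and its variational derivative in the director equation, and by inserting $(\vv,\dd)$ into the strong system~\eqref{eq:strong}. Here I would rely on the integration-by-parts identities of Section~\ref{sec:int} to transfer derivatives onto the regular factor $(\vv,\dd)$, and on the algebraic estimates of Section~\ref{sec:rel} to recast the quadratic Leslie dissipation; the coercivity encoded in~\eqref{con} together with Parodi's relation~\eqref{parodi} is what makes the combined dissipation sign-definite after the strong solution's part is subtracted.

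The decisive step is then to bound each remainder on the right-hand side by $C(s)\,\mathcal{R}(s)$ with $C\in L^1(0,T)$, and this is where the regularity~\eqref{strongreg} of the strong solution is spent: continuity of $\nabla\vv$ and $\nabla\dd$, supplemented by the $L^1(\f W^{1,\infty})$ and $L^\infty(\f W^{1,\infty})$ bounds, controls the convective, the rotational stretching, and the Ericksen-stress commutator terms, while the genuinely measure-theoretic contributions carrying $\nu^o$, $\nu^\infty$, $m_t$ and $\mu_t$ are absorbed into the relative elastic energy or into the defect mass. I expect the main obstacle to be exactly the nonconvex elastic part: the quartic $\f\Theta$-terms and the Young-measure representation of the Ericksen stress must be reorganised so that the non-sign-definite leftovers collapse to quadratic expressions in $(\f v-\vv,\,\f d-\dd,\,\nabla\f d-\nabla\dd)$ that the strongly elliptic $\f\Lambda$-part of $\mathcal{R}$ dominates; handling the degeneracy of that form on divergence- and symmetry-free directions, consistently with the norm constraint, is the delicate point. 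Once this closed differential inequality is obtained, $\mathcal{R}(0)=0$ from the common initial data together with Gronwall's lemma yields $\mathcal{R}\equiv 0$, establishing Theorem~\ref{thm:main}.
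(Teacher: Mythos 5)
Your outline reproduces the paper's proof strategy: the relative energy~\eqref{relEn}, the suitable solution's energy inequality~\eqref{energyin} combined with the strong solution's energy equality, cross-testing of~\eqref{eq:velo} and~\eqref{eq:mdir} with $\vv$ and $\dd\times\tq$ against insertion of $(\f v,\f d)$-built test functions into~\eqref{eq:strong}, the integration-by-parts formulae of Section~\ref{sec:int}, absorption of remainders into $\delta\int\mathcal{W}+\int\mathcal{K}\mathcal{E}$, and Gronwall (Proposition~\ref{lem:main}). One clarification of emphasis: \eqref{relEn} is not ``$\ll{\nu_t,F}$ plus its linearisation around $\dd$'' in the sense of~\eqref{relencon}; the paper's point is that $F$ is a \emph{quadratic} form in the variables $(\nabla\f d,\,\nabla\f d\otimes\f d)$ by~\eqref{tensoren}, so the relative energy is defined directly as the quadratic difference form in those variables, which is why nonnegativity survives the nonconvexity in $\f d$.

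There is, however, one step in your plan that would fail as stated. You propose to prove nonnegativity and rigidity of $\mathcal{R}$ (``$\mathcal{R}(t)=0$ forces $\f v=\vv$, $\f d=\dd$, $\nu^o=\delta_{\nabla\dd}$, $\mu_t\equiv 0$, $m_t\equiv 0$'') ``using the strong ellipticity of $\f\Lambda$'' and the constraint $|\f d|=1$. Strong ellipticity is only a rank-one (Legendre--Hadamard) condition, and pointwise on matrices the form is genuinely degenerate: $\f S:\f\Lambda:\f S=k_1\tr(\f S)^2+2k_2\left|(\f S)_{\skw}\right|^2$ annihilates every symmetric trace-free matrix, and the expanded $\f\Theta$-part is likewise a sum of squares of only a few contractions of $\f S\o\f h$. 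So the vanishing of~\eqref{relEn} does \emph{not} directly pin $\nu^o$ to $\delta_{\nabla\dd}$, nor give $\f d=\dd$. The paper's mechanism is measure-theoretic, not pointwise: since $\f d-\dd$ has zero boundary trace and the triple $(\nu^o,m_t,\nu^\infty)$ is a generalized \emph{gradient} Young measure (Definition~\ref{def:gradmeas}, Remark~\ref{rem:boundary}), the div--curl (Korn-type) inequality for $\Hb$-fields passes to the measure and yields $\ll{\nu_t,|\f S-\nabla\dd|^2}+\|\f d-\dd\|_{\f L^6}^2\le c\,\mathcal{E}$, i.e.\ Lemma~\ref{Sobolev},~\eqref{absch1}, imported from~\cite[Prop.~4.2]{masswertig}, together with the algebraic inequality~\eqref{alin} for the $\f\Theta$-block giving~\eqref{absch2}--\eqref{absch4}. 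This lemma is not merely the endgame rigidity statement: it is the indispensable tool in your absorption step, since without it nothing bounds the ubiquitous remainders $\ll{\nu_s,|\f S-\nabla\dd|^2}$, $\ll{\nu_s,|\f\Theta\dreidots(\f S\o\f h-\nabla\dd\o\dd)|^2}$ and $\|\f d-\dd\|_{\f L^6}^2$ by $\mathcal{E}(s)$. You correctly sense the ``degeneracy on divergence- and symmetry-free directions'' as the delicate point, but you locate it in the wrong place and leave it unresolved; identifying the gradient structure plus homogeneous boundary data for the difference as the cure is the missing idea. Finally, note $m_t\equiv 0$ is read off from the concentration part of the brackets $\ll{\nu_t,\cdot}$ in the quadratic form (the term $\frac12\ll{\mu_t,1}$ only kills the defect measure $\mu_t$), and the paper in fact proves the stronger continuous-dependence estimate of Proposition~\ref{lem:main} rather than a separate rigidity-first argument.
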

The \textit{proof} of this main result is carried out in the following sections. It is a simple consequence of Proposition~\ref{lem:main}.
\begin{rem}
In Proposition~\ref{lem:main} even a continuous dependence on the initial values is proven. 
\end{rem}
 
 \section{Preliminaries\label{sec:pre}}
 This section collects some auxiliary lemmas and the energy equality for the strong solution.

\begin{lemma}
\label{prop:lemvar}
Let $f,g\in L^1(0,T)$ be fulfilling
\begin{align*}
-\int_0^T \phi'(t) g(t) \de t \leq \int_0^T \phi(t) f(t) \de t  \quad \text{for all }\phi \in \C_c^\infty(0,T) \text{ with }\phi\geq 0\,. 
\end{align*}
Then it holds 
\begin{align*}
g(t)-g(s) \leq \int_s^t f(\tau ) \de \tau\quad \text{for a.\,e.~} t,s\in (0,T)\,. 
\end{align*}
\end{lemma}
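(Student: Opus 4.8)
The plan is to reduce the statement to the elementary fact that a function whose distributional derivative is a nonpositive measure admits a non-increasing representative, and then to realize this concretely through a careful choice of test functions. First I would introduce the auxiliary function
\[
G(t) := g(t) - \int_0^t f(\tau)\,\de\tau \, ,
\]
observing that $t \mapsto \int_0^t f(\tau)\,\de\tau$ is absolutely continuous with derivative $f$ almost everywhere and vanishes at $t=0$, so that $G \in L^1(0,T)$.

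Next I would rewrite the hypothesis in terms of $G$. For any $\phi \in \C_c^\infty(0,T)$, integration by parts together with $\phi(0)=\phi(T)=0$ gives $\int_0^T \big(\int_0^t f\,\de\tau\big)\phi'(t)\,\de t = -\int_0^T f(t)\phi(t)\,\de t$, whence
\[
\int_0^T G(t)\phi'(t)\,\de t = \int_0^T g(t)\phi'(t)\,\de t + \int_0^T f(t)\phi(t)\,\de t \, .
\]
The assumed inequality $-\int_0^T \phi' g \le \int_0^T \phi f$ is precisely the assertion that the right-hand side is nonnegative. Thus $\int_0^T G(t)\phi'(t)\,\de t \ge 0$ for every nonnegative $\phi \in \C_c^\infty(0,T)$, i.e.\ the distributional derivative of $G$ is a nonpositive measure.

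To extract the pointwise conclusion I would fix two Lebesgue points $s < t$ of $G$; since these form a set of full measure, it suffices to prove the inequality for such pairs. For small $\varepsilon>0$ I construct a nonnegative trapezoidal test function $\phi_\varepsilon \in \C_c^\infty(0,T)$ that rises from $0$ to $1$ on $[s,s+\varepsilon]$, equals $1$ on $[s+\varepsilon,t-\varepsilon]$, and decreases back to $0$ on $[t-\varepsilon,t]$, arranged so that $\phi_\varepsilon'$ is an approximate identity of mass $+1$ concentrated near $s$ minus one of mass $+1$ concentrated near $t$. Inserting $\phi_\varepsilon$ into $\int_0^T G\phi_\varepsilon'\,\de t \ge 0$ and letting $\varepsilon \downarrow 0$, the Lebesgue-point property yields $G(s)-G(t)\ge 0$, that is $G(t)\le G(s)$; unwinding the definition of $G$ gives $g(t)-g(s)\le \int_s^t f(\tau)\,\de\tau$, as claimed.

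I expect the passage to the limit in this last step to be the main obstacle: one must verify that $\int_0^T G\phi_\varepsilon'\,\de t \to G(s)-G(t)$, which rests on the Lebesgue differentiation theorem for $G\in L^1(0,T)$, while simultaneously keeping $\phi_\varepsilon \ge 0$ so that the hypothesis remains applicable. Alternatively, this step can be packaged by invoking the standard characterization (via the Riesz representation theorem) that a nonpositive distributional derivative forces a monotone representative; I would nonetheless favour the explicit test-function construction, since it makes the almost-everywhere statement transparent and keeps the argument self-contained.
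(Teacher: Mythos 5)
Your proof is correct and follows essentially the same route as the paper: the paper's proof takes $\phi_\varepsilon = \rho_\varepsilon \ast \chi_{[s,t]}$, which is precisely your smooth trapezoid approximating the indicator of $[s,t]$, and passes to the limit at Lebesgue points. Your preliminary substitution $G(t) = g(t) - \int_0^t f(\tau)\,\de \tau$, turning the statement into monotonicity of $G$, is a harmless repackaging of the same argument rather than a different method.
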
 
\begin{proof}
The proof is similar to the proof of the standard variational lemma, just by using the sequence of smooth functions $\phi_\varepsilon\in \C_c^\infty(0,T)$ defined by 
\begin{align*}
\phi_\varepsilon(\tau) := (\rho_\varepsilon \ast \chi_{[t,s]})(\tau) = \int_0^T \rho(\tau-r) \chi_{[t,s]}(r) \de r \,.
\end{align*}
Here the functions $\rho _\varepsilon$ are the usual mollifier functions (see~Emmrich~\cite[Definition 3.1.6]{emmrich}) and $\chi_{[t,s]}$ denotes the characteristic function. The limit $\varepsilon\ra 0$ gives the assertion. 
\end{proof}
 \begin{lemma}
 \label{lem:norm1}
 For the measure-valued solution, there holds
 \begin{align*}
 \t \f d = \rot{\f d}^T \rot {\f d} \t \f d \quad \text{in } L^2( 0,T; \f L^{\nicefrac{3}{3}})\,.
 \end{align*}
 For the strong solution, there holds
 \begin{align*}
 \t \dd = \rot{\dd}^T\rot{\dd} \t \dd \quad \text{in }L^\infty(0,T; \f L^\infty)\cap L^\infty(0,T;\f W^{1,3})\,.
\end{align*}  
 \end{lemma}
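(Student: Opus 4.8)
The plan is to exploit the pointwise unit-norm constraint $|\f d|=1$, which holds almost everywhere for the measure-valued solution and (via the continuity of $\nabla \dd$ together with the boundary data) for the strong solution. The algebraic identity to lean on is the one already recorded in the notation section, namely $\rot{\f a}^T \rot{\f b} = (\f a \cdot \f b) I - \f b \otimes \f a$. Setting $\f a = \f b = \f d$ gives
\begin{align*}
\rot{\f d}^T \rot{\f d} = |\f d|^2 I - \f d \otimes \f d = I - \f d \otimes \f d
\end{align*}
almost everywhere, since $|\f d|=1$. Hence the claimed identity $\t \f d = \rot{\f d}^T \rot{\f d}\,\t\f d$ is equivalent to $(\f d \otimes \f d)\t \f d = \f 0$, i.e. to $\f d\,(\f d \cdot \t \f d) = \f 0$, which in turn reduces to showing $\f d \cdot \t \f d = 0$ almost everywhere.

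First I would establish $\f d \cdot \t \f d = 0$ by differentiating the constraint $|\f d|^2 = 1$ in time. For the measure-valued solution this is legitimate because $\f d \in L^\infty(0,T;\He)$ with $\t \f d \in L^2(0,T;\f L^{\nicefrac32})$, so $t \mapsto |\f d(t)|^2$ is (after the usual chain-rule justification for $W^{1,p}$-in-time functions) weakly differentiable with $\partial_t |\f d|^2 = 2\,\f d \cdot \t\f d$; since $|\f d|^2$ is the constant $1$, this vanishes. For the strong solution the same computation applies with the higher regularity in \eqref{strongreg}, so that $\dd\cdot \t\dd = 0$ holds in the stronger topologies claimed. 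Then I would simply substitute back: $\rot{\f d}^T\rot{\f d}\,\t\f d = (I - \f d\otimes \f d)\t\f d = \t\f d - \f d(\f d\cdot\t\f d) = \t\f d$, and identically for $\dd$.

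The only real care needed is a bookkeeping check that each product and each term lands in the asserted function space. For the measure-valued case, $\rot{\f d}^T\rot{\f d}$ is bounded (as $|\f d|=1$ forces $|\rot{\f d}^T\rot{\f d}| \le c$), so multiplying the $L^2(0,T;\f L^{\nicefrac32})$ function $\t\f d$ by it keeps the product in $L^2(0,T;\f L^{\nicefrac32})$, matching the statement (the exponent $\nicefrac33$ printed in the excerpt is evidently a typo for $\nicefrac32$). For the strong case, one verifies that $\f d\otimes\f d$ and its gradient are controlled by the $L^\infty(0,T;\f W^{1,\infty})$ bound on $\dd$, so that $(I-\dd\otimes\dd)\t\dd$ inherits the $L^\infty(0,T;\f L^\infty)\cap L^\infty(0,T;\f W^{1,3})$ regularity from $\t\dd$ via the product and chain rules. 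The main obstacle — and it is a mild one — is justifying the chain rule $\partial_t|\f d|^2 = 2\,\f d\cdot\t\f d$ for the low-regularity measure-valued solution; I would handle this by the standard mollification-in-time argument, or by invoking the known chain rule for the Gelfand-triple setting associated with $\f d \in L^\infty(\He) \cap W^{1,2}(\f L^{\nicefrac32})$, rather than attempting a pointwise differentiation.
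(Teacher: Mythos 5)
Your proposal is correct and follows essentially the same route as the paper: both rest on the identity $\rot{\f d}^T\rot{\f d}=|\f d|^2 I-\f d\otimes\f d$ together with $|\f d|=1$, and both reduce the claim to the vanishing of the weak time derivative of $|\f d|^2$. The only cosmetic difference is that the paper carries this out distributionally --- testing against $\f\varphi\in\C_c^\infty(\Omega\times(0,T))$, shifting $\t$ onto $\f d\cdot\f\varphi$, and concluding by density of test functions --- whereas you justify the chain rule $\t|\f d|^2=2\,\f d\cdot\t\f d$ by mollification and then argue pointwise a.e., which is an equivalent formulation of the same argument (and you are right that the printed exponent $\nicefrac{3}{3}$ should be $\nicefrac{3}{2}$).
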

 \begin{proof}
 First, we remark that $\f d \in L^\infty(0,T;\f L^{\infty})$.
 Indeed, $ | \f d (\f x ,t) | =1$ a.e.~in $\Omega \times (0,T)$. 
  For a test function $\f \varphi \in \C_c^\infty(\Omega\times (0,T)) $ we can calculate
 \begin{align*}
 \intt{ \rot{\f d }^T \rot{\f d}\t \f d , \f \varphi } ={}& \intt{ (| \f d |^2 I - \f d \o \f d ) \t \f d , \f \varphi  } \\={}& \intt{\t \f d , \f \varphi} + \frac{1}{2} \intt{ | \f d |^2, \t ( \f d \cdot \f \varphi) }=\intt{\t \f d , \f \varphi} \,.
 \end{align*}
 The weak time derivative of $| \f d|^2$ vanishes, since it is constant a.\,e. in $ \Omega \times (0,T)$. 
 The density of the test functions in $L^2(0,T; \f L^6)$ proves the assertion. 
 
 The proof for the strong solution is similar. The additional regularity is due to~\eqref{strongreg}. 
 \end{proof}
\begin{lemma}\label{lem:vschlange}
For the measure-valued solution, there holds \begin{align*}
(\f v \cdot \nabla)\f d = \rot{\f d }^T \rot{\f d} (\f v \cdot \nabla)\f d \quad \text{in } L^2( 0,T; \f L^{\nicefrac{3}{2}})\,.
\end{align*}
 For the strong solution, there holds
 \begin{align*}
 (\vv \cdot \nabla) \dd = \rot{\dd}^T\rot{\dd} (\vv \cdot \nabla) \dd \quad \text{in }L^\infty(0,T; \f L^\infty)\cap L^\infty(0,T;\f W^{1,3})\,.
\end{align*}  
\end{lemma}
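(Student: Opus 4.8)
The plan is to mirror the proof of Lemma~\ref{lem:norm1}, replacing the time derivative $\t\f d$ by the convective derivative $\co\f d$. The algebraic backbone is the identity recorded in Section~\ref{sec:not}, $\rot{\f d}^T\rot{\f d} = (\f d\cdot\f d)I - \f d\o\f d$, which under the norm restriction $|\f d|=1$ becomes $\rot{\f d}^T\rot{\f d} = I - \f d\o\f d$. Consequently $\rot{\f d}^T\rot{\f d}\,\co\f d = \co\f d - (\f d\o\f d)\co\f d$, so the whole assertion reduces to showing that the normal component vanishes, i.e.\ $(\f d\o\f d)\co\f d = \f d\,(\f d\cdot\co\f d) = \f 0$, equivalently $\f d\cdot\co\f d = 0$ in $L^2(0,T;\f L^{\nicefrac{3}{2}})$.

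For the core step I would use that $\co\f d = (\nabla\f d)\f v$ and hence $\f d\cdot\co\f d = \big((\nabla\f d)^T\f d\big)\cdot\f v$. Since $\f d\in\He$ with $\f d\in\f L^\infty$ (because $|\f d|=1$ a.e.), the chain rule for Sobolev functions gives $(\nabla\f d)^T\f d = \tfrac12\nabla|\f d|^2$, and the norm restriction $|\f d|^2\equiv 1$ forces $\nabla|\f d|^2 = \f 0$ a.e. Thus $\f d\cdot\co\f d = 0$ a.e.\ in $\Omega\times(0,T)$, which is the claim. Equivalently, and closer to the bookkeeping of Lemma~\ref{lem:norm1}, one may test $(\f d\o\f d)\co\f d$ against $\f\varphi\in\C_c^\infty(\Omega\times(0,T);\R^3)$, use $\di\f v=0$ to write $\tfrac12(\f v\cdot\nabla|\f d|^2)(\f d\cdot\f\varphi)=\tfrac12\di(|\f d|^2\f v)(\f d\cdot\f\varphi)$, and then integrate by parts twice in space; all boundary terms vanish by the compact support of $\f\varphi$, and since $|\f d|^2=1$ and $\di\f v=0$ the result is $\tfrac12\int_\Omega(\di\f v)(\f d\cdot\f\varphi)\,\de\f x = 0$. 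Density of the test functions then concludes.

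The regularity bookkeeping is routine: $\f v\in L^2(0,T;\V)$ embeds into $L^2(0,T;\f L^6)$ and $\nabla\f d\in L^\infty(0,T;\Le)$, so by H\"older's inequality ($\tfrac16+\tfrac12=\tfrac23$) one has $\co\f d\in L^2(0,T;\f L^{\nicefrac{3}{2}})$, matching the space in which the identity is stated. For the strong solution the same computation applies verbatim, now with the improved regularity~\eqref{strongreg}: the relation $(\nabla\dd)^T\dd=\tfrac12\nabla|\dd|^2=\f 0$ holds with $\nabla\dd$ continuous, and both $(\vv\cdot\nabla)\dd$ and $\rot{\dd}^T\rot{\dd}(\vv\cdot\nabla)\dd$ lie in $L^\infty(0,T;\f L^\infty)\cap L^\infty(0,T;\f W^{1,3})$.

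I expect the only genuine obstacle to be the justification of the pointwise relation $\f d\cdot\co\f d=\tfrac12\f v\cdot\nabla|\f d|^2$ at the low regularity of the measure-valued solution, where $\co\f d$ merely belongs to $L^2(0,T;\f L^{\nicefrac{3}{2}})$; this is precisely where the Sobolev chain rule (or the equivalent spatial integration by parts above) must be invoked in place of a naive differentiation. For the strong solution this point is immediate from the continuity of $\nabla\dd$, and the statement then follows from the elementary algebra of the first paragraph.
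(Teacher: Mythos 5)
Your proposal is correct and, in its duality version, coincides with the paper's own proof: the paper likewise tests $\rot{\f d}^T\rot{\f d}(\f v\cdot\nabla)\f d$ against smooth test functions, uses $\rot{\f d}^T\rot{\f d}=|\f d|^2 I-\f d\o\f d$, moves the derivative of $|\f d|^2$ onto the test function using $\di\f v=0$ so that it vanishes by $|\f d|=1$ a.e., and concludes by density, treating the strong solution identically with the regularity~\eqref{strongreg}. Your additional pointwise shortcut via the Sobolev chain rule, $(\nabla\f d)^T\f d=\tfrac12\nabla|\f d|^2=\f 0$ a.e.\ (legitimate here since $\f d\in L^\infty(0,T;\He)\cap L^\infty(0,T;\f L^\infty)$, so $\nabla\f d$ is an honest weak gradient, unlike the time derivative in Lemma~\ref{lem:norm1}), is also sound and slightly more direct.
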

\begin{proof}
For a test function  $\f \psi \in \C_c^\infty(\Omega\times (0,T)) $ we can calculate
\begin{align*}
\intte{\left (\rot{\f d }^T \rot{\f d }(\f v \cdot \nabla)\f d , \f \psi\right )}={}&  \intte{\left (| \f d |^2 (\f v \cdot \nabla)\f d , \f \psi\right )} - \frac{1}{2}\intte{\left (( \f v  \cdot \nabla )| \f d |^2, \f \psi\cdot \f d \right )}\\={}&  \intte{\left ( (\f v \cdot \nabla)\f d , \f \psi\right )} + \frac{1}{2}\intt{\left (| \f d |^2,\di ( \f v  \f \psi\cdot \f d) \right )}\,.
\end{align*}
Since $| \f d |=1$ a.\,e.~in $\Omega\times (0,T)$, the weak derivatives of $| \f d |^2$ in the last term on the right-hand side of the forgoing equation  vanishes.
 The density of the test functions in $L^2(0,T; \f L^3)$ proves the assertion. 
 
 The proof for the strong solution is similar. The additional regularity is due to~\eqref{strongreg}. 

\end{proof}

\begin{corollary}\label{prop:e}
 For the measure-valued solution, there holds
 \begin{align*}
 \f e = \t \f d +( \f v \cdot \nabla ) \f d - \sk v \f d = \rot{\f d }^T \rot{\f d } \f e = -\rot{\f d }^T \rot{\f d } \left (\lambda \sy v \f d +  \f q\right )
  \quad \text{in } L^2( 0,T; \f L^{{2}})\,.
 \end{align*}
 For the strong solution, there holds
 \begin{align*}
 \te =\t \dd + ( \vv \cdot \nabla )\dd - \skv \dd =  \rot{\dd}^T\rot{\dd} \te= - \rot{\dd}^T\rot{\dd} ( \lambda \syv \dd + \tq)  \quad \text{in }L^\infty(0,T; \f L^3)\,.
\end{align*}  
\end{corollary}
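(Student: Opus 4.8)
The plan is to exploit the algebraic identity $\rot{\f d}^T \rot{\f d} = (\f d \cdot \f d) I - \f d \otimes \f d$ recorded in the notation section, which under the constraint $|\f d| = 1$ reduces to the orthogonal projector $I - \f d \otimes \f d$ onto the pointwise orthogonal complement of $\f d$. In particular $\rot{\f d}^T \rot{\f d} \f w = \f w$ holds precisely when $\f w$ is pointwise perpendicular to $\f d$. Accordingly I would split the corollary into two assertions: the middle equality $\f e = \rot{\f d}^T \rot{\f d}\f e$, which amounts to $\f e \perp \f d$, and the last equality, which comes from feeding the director equation into $\rot{\f d}^T \rot{\f d}\f e$.

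For the middle equality I would treat the three summands of $\f e = \t \f d + \co \f d - \sk v \f d$ in turn. The first two are exactly the content of Lemma~\ref{lem:norm1} and Lemma~\ref{lem:vschlange}, which record $\t \f d = \rot{\f d}^T\rot{\f d}\t \f d$ and $\co \f d = \rot{\f d}^T \rot{\f d}\co \f d$ in the relevant Bochner spaces; their proofs are precisely the (weak) statements $\f d \cdot \t \f d = \frac{1}{2}\t|\f d|^2 = 0$ and $\f d \cdot \co \f d = \frac{1}{2}\co|\f d|^2 = 0$. The only new term is the skew-symmetric one: since $\sk v = (\nabla \f v)_{\skw}$ is skew-symmetric one has $\f d \cdot \sk v \f d = 0$, whence $(\f d \otimes \f d)\sk v \f d = \f d(\f d \cdot \sk v \f d) = 0$ and therefore $\rot{\f d}^T \rot{\f d}\sk v \f d = \sk v \f d$. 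Adding the three identities gives $\f e = \rot{\f d}^T \rot{\f d}\f e$.

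For the last equality I would invoke the director equation. In the strong form~\eqref{dir}, respectively in the weak form~\eqref{eq:mdir}, it reads $\rot{\f d}(\f e + \lambda \sy v \f d + \f q) = \f d \times (\f e + \lambda \sy v \f d + \f q) = 0$. Applying $\rot{\f d}^T$ and using linearity yields $\rot{\f d}^T \rot{\f d}\f e = -\rot{\f d}^T \rot{\f d}(\lambda \sy v \f d + \f q)$, and chaining this with the middle equality closes the statement. The strong-solution case is carried out identically, with Lemmas~\ref{lem:norm1}--\ref{lem:vschlange} and the regularity~\eqref{strongreg} providing the $L^\infty(0,T;\f L^3)$ bounds in place of the $L^2(0,T;\f L^2)$ ones.

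The one point that needs care, which I view as the main (if modest) obstacle, is the status of the $\f q$-terms in the measure-valued setting. There $\f q$ itself is not a function but is encoded through the generalized Young measures in~\eqref{eq:q}; only the combination $\f d \times \f q = \rot{\f d}\f q$ is an honest $L^2(0,T;\f L^2)$ object for a suitable solution, as is implicit in the energy inequality~\eqref{energyin}. Thus $\rot{\f d}^T \rot{\f d}\f q$ must be read as $\rot{\f d}^T(\f d \times \f q)$, which lies in $L^2(0,T;\f L^2)$ because $\rot{\f d}$ is bounded by $|\f d| = 1$. This also produces the integrability claimed in the corollary: a priori $\f e$ is controlled only in $L^2(0,T;\f L^{\nicefrac{3}{2}})$ through $\t \f d$ and $\co \f d$, but the right-hand side $-\rot{\f d}^T \rot{\f d}(\lambda \sy v \f d + \f q)$ sits in $L^2(0,T;\f L^2)$ (using $\sy v \f d \in L^2(0,T;\f L^2)$ from $\f v \in L^2(0,T;\V)$ together with $\f d \in L^\infty(0,T;\f L^\infty)$), so the identity upgrades $\f e$ to $L^2(0,T;\f L^2)$.
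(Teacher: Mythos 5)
Your proposal is correct and follows essentially the same route as the paper: the middle identity via Lemmas~\ref{lem:norm1} and~\ref{lem:vschlange} together with $\f d\cdot\sk v\f d=0$, and the final identity by inserting the director equation~\eqref{eq:mdir} (resp.~\eqref{dir}), with the $L^2(0,T;\Le)$ bound on $\f d\times\f q$ from the energy inequality~\eqref{energyin} supplying the claimed integrability. Your explicit remark that in the measure-valued setting only the combination $\f d\times\f q$ is an honest function, so that $\rot{\f d}^T\rot{\f d}\f q$ must be read as $\rot{\f d}^T(\f d\times\f q)$, merely makes precise what the paper leaves implicit in its appeal to the density of test functions.
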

\begin{proof}
Since $\rot{\f d}^T \rot{\f d}= I - \f d \o \f d $ and $\f d \cdot \sk v \f d = 0$, the identity $ \f e =  \rot{\f d }^T \rot{\f d } \f e $ in $ L^2( 0,T; \f L^{\nicefrac{6}{5}})$  is obvious from Lemma~\ref{lem:norm1} and Lemma~\ref{lem:vschlange}.

The term $\f e$ can be estimated by 
\begin{align*}
\| \f e \|_{L^2(L^{\nicefrac{3}{2}})}\leq \| \t \f d \|_{L^2(\f L^{\nicefrac{3}{2}})}+ \| \f v \|_{L^2(\f L^6)} \| \f d \|_{L^\infty(\f W^{1,2})} + \| \f v \|_{L^2( \He)} \| \f d \|_{L^\infty( \f L^6)} \,.
\end{align*}
Due to the energy inequality~\eqref{energyin} and equation~\eqref{eq:mdir} we can estimate
\begin{align*}
\| \rot{\f d }^T \rot{\f d }\f e\|_{L^2( \f L^{{2}})}\leq{}& 
\| \f d \|_{L^\infty( \f L^6)} \left (|\lambda|\| \f d \times  \sy v \f d  \|_{L^2(\Le)} + \| \f d \times \f q \|_{L^2(\Le)}\right )\\
\leq{}& c\| \f d \|_{L^\infty( \f L^\infty)} \left (\| \f d\cdot  \sy v \f d  \|_{L^2(L^2)}+\| \sy v \f d  \|_{L^2(\Le)}+\| \f d \times \f q \|_{L^2(\Le)}\right ) \,.
\end{align*}
The density of the test functions in $L^2(\f L^{{2}})$ and equation~\eqref{eq:mdir} gives the first assertion.
The second one follows similar.

\end{proof}
  \begin{proposition}
 Let $(\vv,\dd)$ be a strong solution  according to Definition~\ref{def:strong}.
 Then it fulfils the following energy equality
 \begin{align*}
\begin{split}
 &\frac{1}{2}\|\vv (t)\|_{\Le}^2 + \mathcal{F}(\dd(t))   + \inttet{\left [(\mu_1+\lambda(\mu_2+\mu_3))\|\dd\cdot \syv\dd\|_{L^2}^2 +
  \mu_4 \|\syv\|_{\Le}^2 \right ]}  \\
& +\inttet{\left [( \mu_5+\mu_6-\lambda(\mu_2+\mu_3))\|\syv\dd\|_{\Le}^2  +  \|\dd \times \tq\|_{\Le}^2\right ]}
\\
& \qquad =  \left ( \frac{1}{2}\|\vv(0) \|_{\Le}^2 + \F( \dd(0))\right )
 + \inttet{\left [\langle \f g , \vv \rangle 
 +(  ( \mu_2+ \mu_3) - \lambda ) \left ( \dd \times \tq , \dd \times \syv \dd \right ) 
 \right ]}\, .
\end{split}
\end{align*}
 \end{proposition}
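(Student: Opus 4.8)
The plan is to test the momentum balance~\eqref{nav} with the strong velocity field $\vv$, to compute $\frac{\de}{\de t}\F(\dd)=(\tq,\t\dd)$ from the chain rule, and to exploit the director dynamics through the constitutive relation of Corollary~\ref{prop:e}, adding the resulting relations so that the kinetic energy $\tfrac12\|\vv\|_{\Le}^2$ and the free energy $\F(\dd)$ appear as time derivatives. Testing~\eqref{nav} with $\vv$ and using $\di\vv=0$ together with $\vv=\f 0$ on $\partial\Omega$, the pressure gradient and the convective term $((\vv\cdot\nabla)\vv,\vv)$ drop out, and an integration by parts in the two stresses (whose boundary contributions vanish since $\vv=\f 0$ on $\partial\Omega$) yields
\begin{align*}
\tfrac12\frac{\de}{\de t}\|\vv\|_{\Le}^2 - (\f T^E;\nabla\vv) + (\f T^L;\nabla\vv) = \langle \f g,\vv\rangle\,.
\end{align*}
Simultaneously, the chain rule for $\F$ combined with $\tq=\delta\F/\delta\dd$ from~\eqref{qdefq} gives $\frac{\de}{\de t}\F(\dd)=(\tq,\t\dd)$; here the boundary term produced by integrating $F_{\nabla\dd}:\nabla\t\dd$ by parts vanishes because $\dd=\f d_1$ is time-independent on $\partial\Omega$, so $\t\dd=\f 0$ there.

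The decisive structural step is the identity
\begin{align*}
(\f T^E;\nabla\vv) = (\tq,(\vv\cdot\nabla)\dd)\,,
\end{align*}
which follows by integrating $(\f T^E;\nabla\vv)$ by parts, inserting the definition $\tq=F_{\dd}-\di F_{\nabla\dd}$ from~\eqref{qdefq}, and noting that the surviving terms assemble via the chain rule into $-\int_\Omega\vv\cdot\nabla F\,\de\f x=\int_\Omega F\,\di\vv\,\de\f x=0$; this is an instance of the integration-by-parts formulae collected in Section~\ref{sec:int}. Writing $\t\dd=\te-(\vv\cdot\nabla)\dd+\skv\dd$ from the definition of $\te$ and adding the two relations, the transport term cancels exactly against the Ericksen stress, leaving
\begin{align*}
\tfrac12\frac{\de}{\de t}\|\vv\|_{\Le}^2 + \frac{\de}{\de t}\F(\dd) = \langle\f g,\vv\rangle - (\f T^L;\nabla\vv) + (\tq,\te) + (\tq,\skv\dd)\,.
\end{align*}

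It then remains to reorganise the right-hand side. I would substitute the constitutive relation of Corollary~\ref{prop:e}, $\te=-\rot{\dd}^T\rot{\dd}(\lambda\syv\dd+\tq)$, use $\rot{\dd}^T\rot{\dd}=I-\dd\o\dd$ (valid since $|\dd|=1$) together with $\dd\cdot\skv\dd=0$, and expand $\f T^L$ according to~\eqref{Leslie}. The skew-symmetric contributions of $\f T^L$ combine with $(\tq,\skv\dd)$ and cancel identically, while the symmetric contributions, rewritten through $(\dd\times\tq,\dd\times\syv\dd)=(\tq,(I-\dd\o\dd)\syv\dd)$ and $\|\dd\times\tq\|_{\Le}^2=(\tq,(I-\dd\o\dd)\tq)$, produce precisely the dissipative terms with the shifted coefficients $\mu_1+\lambda(\mu_2+\mu_3)$ and $\mu_5+\mu_6-\lambda(\mu_2+\mu_3)$, the term $\|\dd\times\tq\|_{\Le}^2$, and the Parodi cross-term $((\mu_2+\mu_3)-\lambda)(\dd\times\tq,\dd\times\syv\dd)$. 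A final integration over $(0,t)$, moving the dissipative terms to the left, gives the claimed energy equality.

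The main obstacle I expect is the bookkeeping of this last reorganisation: verifying the clean cancellation of all skew-symmetric terms and checking that the symmetric terms deliver exactly the coefficients $\mu_1+\lambda(\mu_2+\mu_3)$ and $\mu_5+\mu_6-\lambda(\mu_2+\mu_3)$ rather than the bare Leslie coefficients $\mu_1$ and $\mu_5+\mu_6$, which is where the tensor identities of Appendix~\ref{sec:app} enter. A secondary point to make rigorous is that the regularity~\eqref{strongreg} of the strong solution legitimises every integration by parts, the duality pairings $(\t\vv,\vv)$ and $(\tq,\t\dd)$, and the representation of $\tfrac12\frac{\de}{\de t}\|\vv\|_{\Le}^2$ and $\frac{\de}{\de t}\F(\dd)$ as genuine time derivatives of absolutely continuous functions.
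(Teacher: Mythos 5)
Your proposal is correct and follows essentially the same route as the paper: the paper likewise tests~\eqref{nav} with $\vv$ and the director equation~\eqref{dir} with $\dd \times \tq$ (which, via $\dd\cdot\t\dd=0$ and $\rot{\dd}^T\rot{\dd}=I-\dd\o\dd$, is exactly your chain-rule identity $\tfrac{\de}{\de t}\F(\dd)=(\tq,\t\dd)$), and then uses Corollary~\ref{prop:e} to insert the director equation twice into the Leslie-stress terms, producing the shifted coefficients $\mu_1+\lambda(\mu_2+\mu_3)$ and $\mu_5+\mu_6-\lambda(\mu_2+\mu_3)$ and the Parodi cross-term. Your explicit verification of the Ericksen-stress/transport cancellation $(\f T^E;\nabla\vv)=(\tq,(\vv\cdot\nabla)\dd)$ and of the skew-term cancellation fills in details the paper delegates to Proposition~4.1 of~\cite{unsere}.
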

 \begin{proof}
 The proof is similar to the proof of~Proposition~4.1 in~\cite{unsere}. Here we only focus on the necessary modification.
 Equation~\eqref{nav} is tested with $\vv$ and equation~\eqref{dir} with $\dd \times \tq $. 
Remark that the identities  $ \rot{\dd}^T \rot{\dd} = I - \dd \o \dd $ , $ \t | \dd|^2=0 $ as well as $  (\vv \cdot\nabla)| \dd|^2=0$ hold for the unit vector $\dd$. 
For the Leslie-stress~$\f T^L$ tested with $\syv$, we use Corollary~\ref{prop:e} to be able to insert equation~\eqref{dir} twice
\begin{align*}
  (\tilde{\f T}^L& ; \syv )- ( \dd \times \skv \dd , \dd \times \tq )\\
  ={}& \mu_1 | \dd \cdot \syv \dd|^2 +  \mu_4 |\syv|^2 + ( \mu_5 + \mu_6) | \syv \dd|^2 + ( \mu_2 + \mu_3) (\te ,\syv \dd) 
  \\&+ \lambda ( \syv \dd , \skv \dd) + ( \te , \skv \dd) - ( \dd \times \skv \dd , \dd \times \tq )\\
  ={}&(\mu_1+ \lambda(\mu_2+\mu_3)) | \dd \cdot \syv \dd|^2 +  \mu_4 |\syv|^2 + ( \mu_5 + \mu_6- \lambda(\mu_2+\mu_3)) | \syv \dd|^2 \\
  &-  ( \mu_2 + \mu_3) (\dd \times \tq  ,\dd \times \syv \dd) 
  \,.
\end{align*}
Summing up both tested equations and integrating in time gives the desired energy equality.

 \end{proof}

\begin{lemma}\label{lem:vab}
Let $\f v $ be a measure-valued solution (see~Definition~\ref{def:meas})  and $\vv$ a strong solution (see~Definition~\ref{def:strong}). It holds that  
\begin{align*}
 \| \f v - \vv \|_{\f L^6} ^2 + \| \sk v - \skv \|_{\Le}^2 
\leq c\|\sy v-\syv \|_{\Le}^2 
 \, .
\end{align*} 
\end{lemma}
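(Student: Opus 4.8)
The inequality is a coercivity/norm-equivalence estimate, and the plan is to prove it in two independent pieces: first the gradient decomposition controlling $\|\sk v - \skv\|_{\Le}$, and then the Sobolev-type bound controlling $\|\f v - \vv\|_{\f L^6}$. For the first piece I would exploit that both $\f v$ and $\vv$ are solenoidal and vanish on $\partial\Omega$: with $\f w := \f v - \vv \in \V$ one has $\di \f w = 0$, and the key fact is that for divergence-free fields with zero trace the full gradient norm is controlled by the symmetric gradient norm via Korn's inequality, $\|\nabla \f w\|_{\Le}^2 \le c\,\|\sy w\|_{\Le}^2$. Since $\nabla \f w = \sy w + \sk w$ with the two parts $L^2$-orthogonal, this immediately gives $\|\sk w\|_{\Le}^2 \le \|\nabla \f w\|_{\Le}^2 \le c\,\|\sy w\|_{\Le}^2$, and because the maps $\f u \mapsto \sy u$ and $\f u \mapsto \sk u$ are linear, $\sk w = \sk v - \skv$ and $\sy w = \sy v - \syv$, delivering the second summand of the claim.

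For the first summand I would combine Korn's inequality with the Sobolev embedding $\He(\Omega)\hookrightarrow \f L^6(\Omega)$ valid in three space dimensions. Explicitly, since $\f w = \f v - \vv$ vanishes on $\partial\Omega$, Poincaré's inequality gives $\|\f w\|_{\He} \le c\,\|\nabla \f w\|_{\Le}$, and then
\begin{align*}
\|\f v - \vv\|_{\f L^6}^2 \le c\,\|\f w\|_{\He}^2 \le c\,\|\nabla \f w\|_{\Le}^2 \le c\,\|\sy v - \syv\|_{\Le}^2\,,
\end{align*}
where the last step again uses Korn's inequality for the solenoidal, zero-trace field $\f w$. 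Adding the two bounds yields the stated estimate with a single generic constant $c$.

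The only genuine subtlety — and the step I would flag as the main point to get right — is the legitimacy of applying Korn's inequality, which requires $\f w \in \V = \f H^1_{0,\sigma}(\Omega)$. This holds because the measure-valued solution satisfies $\f v \in L^2(0,T;\V)$ (see \eqref{measreg}) and the strong solution satisfies $\vv \in L^2(0,T;\f W^{1,3}_{0,\sigma}) \subset L^2(0,T;\V)$ (see \eqref{strongreg}), so the difference lies in $\V$ for a.e.\ $t$, with both vanishing trace on $\partial\Omega$ and zero divergence built in. Everything else is a routine chain of Korn, Poincaré, and the Sobolev embedding, with no delicate estimate beyond tracking that all constants depend only on $\Omega$; hence the final constant is independent of $t$ and of the solutions themselves.
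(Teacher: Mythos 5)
Your proof is correct and follows essentially the same route as the paper, whose own proof simply cites Korn's inequality together with the three-dimensional Sobolev embedding; you have merely (and correctly) filled in the details, including the pointwise orthogonality of $\sy w$ and $\sk w$ and the verification that $\f v - \vv \in \V$ for a.e.~$t$. Note only that the solenoidality of $\f w$ is not actually needed for Korn's first inequality, which holds for all zero-trace $\f H^1$ fields.
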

\begin{proof}
This Lemma is a simple application of the Sobolev embedding in three dimensions and Korn's inequality~(see~\textsc{McLean} \cite[Theorem 10.1]{mclean}).
\end{proof}

\section{Relative energy\label{sec:rel}}
The relative energy is defined by 
\begin{align}
\begin{split}
\mathcal{E}(t) :
={}& \frac{1}{2} \left \| \f v(t) - \vv(t) \right \| _{\Le}^2+ \frac{1}{2}\ll{\mu_t,1} +  \frac{1}{2}\ll{\nu_t ,(\f S - \nabla \dd( t) ) : \f \Lambda : (\f  S- \nabla \dd( t)) } \\& +\frac{1}{2} \ll{\nu_t ,(\f S\o \f h - \nabla \dd( t) \o \dd( t) ) \dreidots \f \Theta  \dreidots (\f S\o \f h  
 - \nabla \dd( t) \o \dd( t) )}
  \,
  \end{split}
  \label{relEn}
\end{align}
and the relative dissipation by
\begin{align}
\begin{split}
\mathcal{W}(t) : ={}& (\mu_1+\lambda(\mu_2+\mu_3)) \| \f d(t) \cdot ( \nabla \f v (t))_{\sym} \f d(t) - \dd(t) \cdot ( \nabla \vv(t))_{\sym } \dd (t)\|_{\Le} ^2\\
&+ ( \mu_5 + \mu_6 -\lambda(\mu_2+\mu_3))) \| ( \nabla \f v(t) )_{\sym} \f d(t) - ( \nabla \vv(t))_{\sym} \dd(t)\|_{\Le}^2\\& + \mu_4 \| ( \nabla \f v(t))_{\sym} - ( \nabla \vv(t))_{\sym} \|_{\Le}^2  +  \| \f d(t) \times \f q(t) - \dd(t) \times \tq(t) \|_{\Le}^2 \, . 
\end{split}
\label{relW}
\end{align}
Inserting the definitions of the tensors $\f \Lambda $ and $\f \Theta $,
the relative energy can be expressed as
\begin{align*}
\mathcal{E}(t)={}& \frac{1}{2}\int_\Omega \mu_t(\de \f x) + 
 \frac{1}{2}\ll{\nu_t ,k_1(\tr{(\f S)} - \tr{(\nabla \dd( t) )})^2   + 2 k_2 | (\f S)_{\skw} - ( \nabla\dd(t))_{\skw} |^2 }\notag \\&+\frac{1}{2} \ll{\nu_t, k_3| \tr(\f S) \f h - (\di \dd(t)) \dd(t) |^2 + k_4((\f S)_{\skw}:  [ \f h]_{\f X} - ( \nabla \dd(t))_{\skw}:  [ \dd(t) ]_{\f X})^2    }\notag
 \\
  &+\frac{1}{2} \ll{\nu_t,  4 k_5  | (\f S)_{\skw} \f h - ( \nabla\dd(t))_{\skw} \dd (t)|^2   } + \frac{1}{2} \left \| \f v(t) - \vv(t) \right \| _{\Le}^2\,.
\end{align*}
We remark that due to the regularity shown in~\cite{masswertig}, there holds $\mathcal{E}\in L^\infty(0,T)$ and $\mathcal{W}\in L^1(0,T)$. The overall goal is to apply Gronwalls estimate to the inequality of the form 
\begin{align*}
\mathcal{E}(t) + \int_0^t \mathcal{W}(s)\de s \leq c_0+  \zeta \int_0^t \mathcal{W}(s) \de s + \int_0^t  \mathcal{K}(s) \mathcal{E}(s) \de s \,,
\end{align*}
with $\zeta<1$ and $\mathcal{K}\in L^1(0,T)$. The constant $c_0$ is a constant depending on the initial values of the solutions $(\f v , \f d)$ and $(\vv ,\dd)$, which vanishes if those are the same. 

The term $\mathcal{K}$ is given by
\begin{align*}
\mathcal{K}(s) ={}& C_{\delta}  \left (  \|\vv(s)\|_{\f L^\infty}^2+ \| \vv(s)\|_{\f W^{1,3}}^2 + \| \dd\|_{\f W^{2,3}}^2 + \| \dd (s)\|_{\f W^{1,6}}^4 + \| \t \dd(s)\|_{\f L^\infty} + \| \t \dd(s)\|_{\f W^{1,3}} \right ) \\ & +C_{\delta}  \left ( \|(\nabla\vv(s))_{\sym}\|_{\f L^\infty}+1  \right )\,,
\end{align*}
where $C_\delta$ is a possible large constant depending on the norms $\| \f d \|_{L^\infty(\f L^\infty)} $, $\| \dd \|_{L^\infty(\f W^{1,\infty})}$ and $\delta$. It is obvious that $\mathcal{K}$ is bounded in $L^1(0,T)$ due to the regularity of the strong solution. 
For the generalized Young measures, a result similar to the Sobolev embedding theorem holds true.
\begin{lemma}\label{Sobolev}
Let $\f d$ be a measure-valued solution (see~Definition~\ref{def:meas}) and $( \nu,m,\nu^\infty) $ the associated generalized Young measure and $\dd$ a strong solution (see~Definition~\ref{def:strong}) to the same initial and boundary values. Let the associated relative Energy be given as above (see\eqref{relEn}). Then there exists a constant $c>0$ such that
\begin{subequations}
\begin{align}
\int_0^t \| \f d(s)- \dd (s)  \|_{\f L^6}^2 \de s + \int_0^t \ll{\nu_s, | \f S - \nabla \dd(s) |^2}\de s \leq  c {}&\int_0^t \mathcal{E}(s)\de s \,,\label{absch1}\\
\int_0^t \ll{\nu_s , \left | \f \Theta \dreidots \left ( \f S \o \f d - \nabla \dd (s)\o \dd(s) \right ) \right |^2 }\de s \leq c {}&\int_0^t \mathcal{E}(s)\de s \,,\label{absch2}\\
\int_0^t \ll{\nu_s , | \f S - \nabla \dd (s)|^2 |\f h - \dd(s) | ^2 } \de s \leq c(1 +\|\nabla \dd \|_{L^\infty(\f L^3)} ^2 + \| \dd \|_{L^\infty(\f L^\infty)}^2){}& \int_0^t \mathcal{E}(s)\de s\,,\label{absch3}
\\
\int_0^t \| \f d (s) - \dd(s)\|_{\f L^{12}}^4 \de s \leq c(1 +\|\nabla \dd \|_{L^\infty(\f L^3)} ^2 + \| \dd \|_{L^\infty(\f L^\infty)}^2){}& \int_0^t \mathcal{E}(s)\de s\,.\label{absch4}
\end{align}
\end{subequations}
\end{lemma}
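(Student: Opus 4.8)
The plan is to prove the four estimates \eqref{absch1}--\eqref{absch4} by exploiting the strong ellipticity of $\f\Lambda$ and the structure of $\f\Theta$ together with the nonnegativity of the individual terms appearing in the explicit expression for the relative energy $\mathcal E$. The essential observation is that each left-hand side should be controlled by one of the five manifestly nonnegative quadratic contributions to $\mathcal E(s)$ displayed just after~\eqref{relEn}, namely the terms weighted by $k_1,\dots,k_5$, plus the kinetic term. Since the director satisfies $|\f d|=1$ a.e.\ and $\dd\in L^\infty(0,T;\Wi)$, all the constants that arise will depend only on $\|\f d\|_{L^\infty(\f L^\infty)}$, $\|\dd\|_{L^\infty(\f W^{1,\infty})}$, and the material constants $k_i$, which is precisely the dependence asserted.

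For~\eqref{absch1}, I would first note that the ellipticity estimate $\f a\otimes\f b:\f\Lambda:\f a\otimes\f b\geq\min\{k_1,k_2\}|\f a|^2|\f b|^2$ stated in the excerpt, applied pointwise, gives $\ll{\nu_s,(\f S-\nabla\dd):\f\Lambda:(\f S-\nabla\dd)}\geq c\,\ll{\nu_s,|\f S-\nabla\dd|^2}$, so the oscillation-gradient term is bounded by $c\,\mathcal E(s)$ directly. For the $\f L^6$ bound on $\f d-\dd$, I would use the identity~\eqref{identify}, namely $\int\f S\,\nu^o_{(\f x,s)}(\de\f S)=\nabla\f d$, to write $\nabla(\f d-\dd)$ as a barycenter, apply Jensen's inequality to pass from $|\nabla\f d-\nabla\dd|^2$ to $\ll{\nu_s,|\f S-\nabla\dd|^2}$, and then invoke the Poincar\'e inequality (both directors share the same boundary data $\f d_1$, so the difference vanishes on $\partial\Omega$) together with the Sobolev embedding $\f H^1\hookrightarrow\f L^6$ in three dimensions. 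Estimate~\eqref{absch2} is more immediate: the term $\ll{\nu_s,(\f S\o\f h-\nabla\dd\o\dd)\dreidots\f\Theta\dreidots(\f S\o\f h-\nabla\dd\o\dd)}$ is exactly twice one of the summands in $\mathcal E$, and since $\f\Theta$ is (up to the explicit weights $k_3,k_4,k_5$) a nonnegative quadratic form whose square root controls $|\f\Theta\dreidots(\cdots)|$, I would bound $|\f\Theta\dreidots(\f S\o\f h-\nabla\dd\o\dd)|^2$ by a constant times that quadratic form, giving $c\,\mathcal E(s)$.

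The genuinely harder pair is~\eqref{absch3} and~\eqref{absch4}, because the left-hand sides are quartic in the fluctuations rather than quadratic. For~\eqref{absch3} I would split $\f S\o\f h-\nabla\dd\o\dd=(\f S-\nabla\dd)\o\f h+\nabla\dd\o(\f h-\dd)$, so that $|\f S-\nabla\dd|^2|\f h-\dd|^2$ can be related to $|(\f S-\nabla\dd)\o(\f h-\dd)|^2$; the plan is to bound this by the product structure inside $\f\Theta$ — in particular using that the $k_5$-term is comparable to $|(\f S)_{\skw}\f h-(\nabla\dd)_{\skw}\dd|^2$ and combining it with the quadratic $\f\Lambda$-control from~\eqref{absch1}, absorbing factors of $|\dd|\leq\|\dd\|_{L^\infty}$ and $|\nabla\dd|\leq\|\nabla\dd\|_{L^\infty(\f L^3)}$ via H\"older. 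Finally~\eqref{absch4} follows from~\eqref{absch3} by a measure-valued Sobolev-type argument: one interpolates the quartic norm $\|\f d-\dd\|_{\f L^{12}}^4$ against $\ll{\nu_s,|\f S-\nabla\dd|^2|\f h-\dd|^2}$ using~\eqref{identify} to identify $\nabla(\f d-\dd)$ as the barycenter of $\f S-\nabla\dd$, Jensen to move the product inside the measure, and the Gagliardo--Nirenberg/Sobolev embedding $\f W^{1,2}\hookrightarrow\f L^6$ applied to $|\f d-\dd|^2$. I expect the main obstacle to be controlling the quartic terms~\eqref{absch3}--\eqref{absch4}, since there the cross terms arising from the splitting above do not sit directly inside a single nonnegative summand of $\mathcal E$, and one must carefully use both the norm constraint $|\f d|=1$ and the $L^\infty$-regularity of the strong solution to close the estimate without picking up uncontrolled fluctuation moments of the Young measure.
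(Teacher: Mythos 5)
There is a genuine gap at the foundational step of your argument, namely your claim that the ellipticity estimate ``applied pointwise'' gives $\ll{\nu_s,(\f S-\nabla\dd):\f \Lambda:(\f S-\nabla\dd)}\geq c\,\ll{\nu_s,|\f S-\nabla\dd|^2}$. The stated ellipticity of $\f \Lambda$ is only \emph{strong} ellipticity, i.e.~positivity on rank-one matrices $\f a\otimes\f b$; the quadratic form itself is $\f S:\f \Lambda:\f S=k_1(\tr \f S)^2+2k_2|(\f S)_{\skw}|^2$, which vanishes on every symmetric trace-free matrix (take $\f S=\mathrm{diag}(1,-1,0)$), so no pointwise constant $c>0$ exists, and the oscillation measure $\nu^o$ is supported on all of $\R^{3\times 3}$, not on rank-one matrices. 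The same degeneracy defeats your plan for~\eqref{absch3}: pointwise one has $\f S\o\f h\dreidots\f \Theta\dreidots\f S\o\f h=k_3(\tr\f S)^2|\f h|^2+k_4(\rot{\f h}:(\f S)_{\skw})^2+4k_5|(\f S)_{\skw}\f h|^2$, which again vanishes for symmetric trace-free $\f S$, so the ``product structure inside $\f \Theta$'' combined with the $k_5$-term cannot control the weighted fourth moment $\ll{\nu_s,|\f S-\nabla\dd|^2|\f h-\dd|^2}$; nor can you assemble it from the separate $\f \Lambda$-controlled second moment, since $\ll{\nu_s,fg}\neq\ll{\nu_s,f}\,\ll{\nu_s,g}$.

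What is actually needed, and what the paper does, is an \emph{integrated} coercivity exploiting that $\f d-\dd$ has homogeneous boundary values and is represented by a generalized \emph{gradient} Young measure: by Definition~\ref{def:gradmeas} and Remark~\ref{rem:boundary} there is a generating sequence $\{\f d_n\}$ vanishing on $\partial\Omega$, for which \cite[Prop.~4.2]{masswertig} yields $\|\f d_n\|_{\Hb}^2\leq c\,(\nabla\f d_n;\f \Lambda:\nabla\f d_n)$ and $\int_\Omega|\f d_n|^2|\nabla\f d_n|^2\de\f x\leq c\,(\nabla\f d_n\o\f d_n\dreidotkom\f \Theta\dreidots\nabla\f d_n\o\f d_n)$; these rest on null-Lagrangian identities such as $\int_\Omega\bigl[(\di \f u)^2-\nabla\f u:\nabla\f u^T\bigr]\de\f x=0$ for $\f u\in\Hb$, which are invisible pointwise. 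Testing the Young-measure convergence with $|\f S|^2$, $\f S:\f \Lambda:\f S$, $|\f h|^2|\f S|^2$ and $\f S\o\f h\dreidots\f \Theta\dreidots\f S\o\f h$ transfers both inequalities to the limit measure, and only then does the paper perform your regrouping $(\f S-\nabla\dd)\o(\f h-\dd)=(\f S\o\f h-\nabla\dd\o\dd)+\nabla\dd\o(\dd-\f h)+(\nabla\dd-\f S)\o\dd$ with H\"older factors $\|\nabla\dd\|_{\f L^3}$, $\|\dd\|_{\f L^\infty}$ to close~\eqref{absch3} via~\eqref{absch1}--\eqref{absch2}. The remaining ingredients of your proposal are sound and coincide with the paper's: Jensen plus~\eqref{identify} plus the embedding for the $\f L^6$-bound in~\eqref{absch1}; the bound $|\f \Theta\dreidots\f \Gamma|^2\leq c\,\f \Gamma\dreidots\f \Theta\dreidots\f \Gamma$ for~\eqref{absch2}, which is exactly inequality~\eqref{alin} proved in the Appendix; and for~\eqref{absch4} the chain $|\nabla|\f d-\dd|^2|\leq 2|\nabla(\f d-\dd)||\f d-\dd|$ with the Sobolev embedding applied to $|\f d-\dd|^2$. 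But without the generating-sequence step replacing your pointwise ellipticity claim, the proof cannot start.
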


\begin{proof}
Since $\f d$ and $\dd$ are measure and strong solutions to the same boundary data, the difference fulfills Dirichlet boundary conditions, i.\,e.~$\f d(t) - \dd(t) \in \Hb$. 
The function $\f d-\dd$ gives rise to a generalized gradient Young measure. 
Due to Definition~\ref{def:gradmeas} there exists a sequence $\{\fn d \}$ such that $\fn d \ra \f d - \dd $ in the sense of generalized Young measures. Due to Remark~\ref{rem:boundary} the sequence fulfills homogeneous boundary conditions.
For $\f d_n$, \cite[Proposition~4.2]{masswertig} yields
\begin{align}
\| \f d_n  (t)\|_{\Hb} ^2 \leq{}& c (  \nabla \f d_n (t); \f \Lambda :\nabla \f d_n(t))\,,\notag\\ \int_\Omega | \f d_n (\f x,t)|^2 | \nabla \f d_n(\f x,t)|^2 \de \f x \leq{}&  c\left (   \nabla \f d_n(t) \o \f d_n (t) \dreidotkom \f \Theta \dreidots \nabla \f d_n(t) \o \f d_n (t)  \right ) \,.\label{abnon}
\end{align}
The convergence result for generalized gradient Young measures for the sequence $\{ \f d_n \}$ applied to the test functions $(\f h, \f S) \mapsto |\f S|^2$ gives
 \begin{align*}
\int _0^T \phi(t) \left |\nabla \f d_n(t)\right |^2 \de t \ra \int _0^T \phi(t) \ll{\nu_t , | \f S - \nabla \dd(t)|^2 }\de t \,
\end{align*}
and for the test function  $( \f h , \f S) \mapsto \f S : \f \Lambda : \f S $ the convergence
\begin{align*}
\int _0^T \phi(t) \left (  \nabla \f d_n (t); \f \Lambda :\nabla \f d_n(t)\right ) \de t \ra \int _0^T \phi(t) \ll{\nu_t , ( \f S - \nabla \dd(t)) : \f \Lambda : ( \f S - \nabla \dd(t)) }\de t \,
\end{align*}
Since $m_t$ is a positive measure and $\nu^o$ is a probability measure, we get with Jensen's inequality 
\begin{align*}
\int _0^T \phi(t) \ll{\nu_t , | \f S - \nabla \dd(t)|^2 }\de t \geq{}& \int_0^T \phi(t)\int_{\Omega} \int_{\R^{3\times 3}} | \f S - \nabla \dd(\f x , t)|^2 \nu^o_{(\f x ,t) } ( \de \f S) \de \f x \de t \\ \geq{}& \int_0^T \phi(t)\int_{\Omega} \left |\int_{\R^{3\times 3}} ( \f S - \nabla \dd(\f x ,t) )\nu^o_{(\f x ,t) } ( \de \f S) \right |^2 \de \f x \de t\,.
\end{align*}
The right-hand side can be identified due to~\eqref{identify} with the weak limit $\nabla \f d $, such that with the embedding in three dimensions
\begin{align*}
\int _0^T \phi(t) \ll{\nu_t , | \f S - \nabla \dd(t)|^2 }\de t \geq{}& \int_0^T \phi(t) \| \nabla \f d (t) - \nabla \dd(t) \|_{\Le}^2\de t \geq  c \int_0^T \phi(t) \|  \f d (t) -  \dd(t) \|_{\f L^6}^2\de t  \,.
\end{align*}
With the fundamental lemma of variational calculus the estimate~\eqref{absch1} is obvious.

The estimate~\eqref{absch2} follows, if one uses the purely algebraic inequality 
\begin{align}
  \left | \f \Theta \dreidots  \left (\f S \o \f h- \nabla \dd \o \dd \right ) \right |^2 \leq  c \left (\f S \o \f h- \nabla \dd \o \dd \right )  \dreidots \f \Theta \dreidots \left (\f S \o \f h- \nabla \dd \o \dd \right )  \,.\label{alin}
\end{align} 
The proof of the inequality~\eqref{alin} is deferred to the Appendix.

Choosing the same approximating sequence as above, application of the convergence result for generalized gradient Young measures
applied to the test function $( \f h , \f S) \mapsto |\f h |^2 | \f S|^2$ gives
\begin{align*}
 \int _0^T \phi(t)\left  \| \nabla \f d_n(t)  | \f d_n(t) | \right \|^2\de t \ra \int _0^T \phi(t) \ll{\nu_t , | \f S - \nabla \dd(t)|^2| \f h - \dd(t)|^2  }\de t \,,
\end{align*}
and for the test function
$( \f h , \f S) \mapsto \f S \o \f h \dreidots \f \Theta \dreidots \f S \o \f h$ the convergence
\begin{multline*}
 \int _0^T \phi(t) \left (   \nabla \f d_n(t) \o \f d_n (t) \dreidotkom \f \Theta \dreidots \nabla \f d_n(t) \o \f d_n (t)  \right ) \de t \ra \\\int _0^T \phi(t) \ll{\nu_t , ( \f S - \nabla \dd(t))\o ( \f h - \dd(t))  : \f \Theta : ( \f S - \nabla \dd(t))\o ( \f h - \dd(t))  }\de t \,.
\end{multline*}
Together with~\eqref{abnon} we get 
\begin{align*}
 \ll{\nu_s, | \f S- \nabla \dd | ^2 | \f h - \dd|^2} \de s \leq c  \ll{\nu_s, (\f S- \nabla \dd)  \o (\f h- \dd)   \dreidots  \f \Theta \dreidots  (\f S- \nabla \dd)  \o (\f h- \dd)  } \,
\end{align*}
a.\,e.~in $(0,T)$. 
A regrouping of the terms shows
\begin{align*}
( \f S - \nabla \dd ) \o( \f h - \dd ) ={}& \f S \o \f h - \nabla \dd \o \f h - \f S \o \dd + \nabla \dd \o \dd \\
 ={}& ( \f S \o \f h - \nabla \dd \o \dd ) + ( \nabla \dd \o \dd - \nabla \dd \o \f h ) + ( \nabla \dd \o \dd - \f S \o \dd )\\
 ={}& ( \f S \o \f h - \nabla \dd \o \dd ) +  \nabla \dd \o ( \dd -\f h) + ( \nabla \dd - \f S ) \o \dd \,
\end{align*}
and, thus
\begin{align*}
& \ll{\nu_s, (\f S- \nabla \dd)  \o (\f h- \dd)   \dreidots  \f \Theta \dreidots  (\f S- \nabla \dd)  \o (\f h- \dd)  }\\ \leq{}&  \ll{ \nu_s, ( \f S \o \f h - \nabla \dd \o \dd)\dreidots \f \Theta \dreidots ( \f S \o \f h - \nabla \dd \o \dd) } +  \ll{ \nu_s,  \left | \f \Theta \dreidots \left ( \f S \o \f h - \nabla \dd \o \dd \right ) \right | ^2} \\&+ c \| \nabla \dd \|_{\f L^3} ^2  \| \dd - \f d \|_{\f L^6}^2 + c \| \dd \|_{\f L^\infty} ^2  \ll{\nu_s, | \nabla \dd- \f S|^2  } \,
\end{align*}
a.\,e.~in $(0,T)$. 
The estimates~\eqref{absch1} and~\eqref{absch2} imply inequality~\eqref{absch3}.

To prove the last inequality, we consider the left-hand side of~\eqref{absch3}.
Since $m_t$ is a positive measure and $\nu^o$ is a probability measure, we get with Jensen's inequality 
\begin{align*}
\int _0^T \phi(t) \ll{\nu_t , | \f S - \nabla \dd(t)|^2 | \f h - \dd(t)|^2 }\de t \geq{}& \int_0^T \phi(t)\int_{\Omega} \int_{\R^{3\times 3}} | \f S - \nabla \dd(\f x, t)|^2 \nu^o_{(\f x ,t) } ( \de \f S)| \f d(\f x ,t)- \dd(\f x ,t)|^2 \de \f x \de t \\ \geq{}& \int_0^T \phi(t)\int_{\Omega} \left |\int_{\R^{3\times 3}} ( \f S - \nabla \dd(\f x ,t) )\nu^o_{(\f x ,t) } ( \de \f S) \right |^2 | \f d(\f x ,t)- \dd(\f x ,t)|^2 \de \f x \de t\,.
\end{align*}
The right-hand side can be identified due to~\eqref{identify} with the weak limit $\nabla \f d $, such that with the embedding in three dimensions
\begin{align*}
\int _0^T \phi(t) \ll{\nu_t , | \f S - \nabla \dd(t)|^2 | \f h - \dd(t)|^2  }\de t \geq{}& \int_0^T \phi(t)\left  \| \nabla \f d (t) - \nabla \dd(t) | \f d (t) - \dd(t)|\right  \|_{\Le}^2\de t \\\geq{}&
\int_0^T \phi(t)\left  \| \nabla |\f d (t) -  \dd(t)|^2\right  \|_{\Le}^2\de t\\
\geq {}&
  c \int_0^T \phi(t) \left \|  |\f d (t) -  \dd(t) |^2\right \|_{\f L^6}^2\de t\\
  \geq {}&
  c \int_0^T \phi(t) \left \|  \f d (t) -  \dd(t) \right \|_{\f L^{12}}^4\de t  \,.
\end{align*}
Remark that the difference of both solutions fulfills homogeneous Dirichlet boundary conditions.
With the fundamental lemma of variational calculus the estimate~\eqref{absch4} is obvious.

\end{proof}

\begin{lemma}\label{lem:timederi}
Let $\f d$ and $\dd$ be given as in Lemma~\ref{Sobolev} as well as $\f a \in L^1(0,T, \f L^3)$. 
Then it holds 
\begin{align*}
\int_0^t \left (  \rot{\f d(s) } \t \f d(s) - \rot{\dd(s)} \t \dd(s)  ,\left ( \rot{\f d(s) } - \rot{\dd(s)} \right ) \f a(s) \right ) \de s \leq  \int_0^t \mathcal{K}(s)\mathcal{E}(s)  + \delta  \mathcal{W}(s)\de s\,. 
\end{align*}
Here $\mathcal{K}$ is given by $\mathcal{K}= C_\delta \left ( \| \vv\| _{\f L^\infty}^2 + \| \vv\|_{\f W^{1,3}}^2+ \| \f a \|_{L^3}^2\right )(\| \f d \|_{L^\infty(\f L^\infty)}\| \dd \|_{L^\infty(\f L^\infty)}^2  \| \dd \|_{L^\infty(\f W^{1,3})}^2+1)$ and $C_\delta$ is a constant depending on $\delta$.

\end{lemma}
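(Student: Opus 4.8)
The plan is to remove the time derivatives by inserting the director equation and then to match every resulting factor either against the dissipation $\mathcal{W}$ or against the energy $\mathcal{E}$. By Corollary~\ref{prop:e} one has $\t\f d = \f e-\co\f d+\sk{v}\f d$ with $\f e=-\rot{\f d}^T\rot{\f d}(\lambda\sy{v}\f d+\f q)$; since $|\f d|=1$ gives $\rot{\f d}\rot{\f d}^T\rot{\f d}=\rot{\f d}$, multiplication by $\rot{\f d}$ yields
\begin{align*}
\rot{\f d}\t\f d = -\lambda\,\f d\times\sy{v}\f d-\f d\times\f q-\f d\times\co\f d+\f d\times(\sk{v}\f d),
\end{align*}
and the analogue for $\rot{\dd}\t\dd$ with $\f v,\f d,\f q$ replaced by $\vv,\dd,\tq$. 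Subtracting, the integrand pairs this difference with $(\rot{\f d}-\rot{\dd})\f a=(\f d-\dd)\times\f a$. Because $|\f d|=|\dd|=1$, the factor is bounded, $\|\f d-\dd\|_{\f L^\infty}\le 2$, so $(\f d-\dd)\times\f a\in\f L^3$ and the pairing with $\rot{\f d}\t\f d\in\f L^{\nicefrac{3}{2}}$ is well defined. Throughout I would use the pointwise bounds $\|\f d(s)-\dd(s)\|_{\f L^6}^2+\|\nabla(\f d(s)-\dd(s))\|_{\Le}^2\le c\,\mathcal{E}(s)$ coming from the proof of Lemma~\ref{Sobolev}, and $\|\f v-\vv\|_{\f L^6}+\|\sk{v}-\skv\|_{\Le}\le c\,\|\sy{v}-\syv\|_{\Le}\le c\,\mathcal{W}(s)^{\nicefrac12}$ from Lemma~\ref{lem:vab}.

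The algebraic terms are routine. With $\f d\times\f q-\dd\times\tq$, $\f d\times(\sy{v}\f d-\syv\dd)$ and $\f d\times(\sk{v}-\skv)\f d$ as the first factor, Cauchy--Schwarz against $(\f d-\dd)\times\f a$ (exponents $2,6,3$) together with $|\f d|\le 1$ bounds each by $c\,\mathcal{W}^{\nicefrac12}\mathcal{E}^{\nicefrac12}\|\f a\|_{\f L^3}$, which Young's inequality turns into $\delta\mathcal{W}+C_\delta\|\f a\|_{\f L^3}^2\mathcal{E}$, the dissipative squares being absorbed into $\delta\mathcal{W}$ via Lemma~\ref{lem:vab}. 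The remaining pieces $(\f d-\dd)\times\syv\dd$, $\skv(\f d-\dd)$ and $(\f d-\dd)\times\skv\dd$ carry a genuine factor $\f d-\dd$, so paired with $(\f d-\dd)\times\f a$ they produce $|\f d-\dd|^2$; estimating $\||\f d-\dd|^2\|_{\f L^3}=\|\f d-\dd\|_{\f L^6}^2\le c\mathcal{E}$ and $\|\syv\dd\|_{\f L^3},\|\skv\dd\|_{\f L^3}\le c\|\vv\|_{\f W^{1,3}}$ (exponents $3,3,3$) gives $c\|\vv\|_{\f W^{1,3}}\|\f a\|_{\f L^3}\mathcal{E}\le\mathcal{K}\mathcal{E}$.

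The main obstacle is the convective difference $\f d\times\co\f d-\dd\times(\vv\cdot\nabla)\dd$, since $\co\f d$ only lies in $\f L^{\nicefrac{3}{2}}$ and naive estimates just miss integrability. I would split it as
\begin{align*}
\co\f d-(\vv\cdot\nabla)\dd=\big((\f v-\vv)\cdot\nabla\big)(\f d-\dd)+\big((\f v-\vv)\cdot\nabla\big)\dd+(\vv\cdot\nabla)(\f d-\dd),
\end{align*}
together with the lower-order term $(\f d-\dd)\times(\vv\cdot\nabla)\dd$ stemming from $\rot{\f d}-\rot{\dd}$. Using $|\f d-\dd|\le 2$, the regularity $\vv,\nabla\dd\in\f L^\infty$ of the strong solution, and Hölder exponents adapted to each summand ($6,2,3$ for the first and third, $6,6,3$ for the second), the three pieces are controlled by $c\,\mathcal{W}^{\nicefrac12}\mathcal{E}^{\nicefrac12}\|\f a\|_{\f L^3}$ (the first two, through $\|\f v-\vv\|_{\f L^6}\le c\mathcal{W}^{\nicefrac12}$) and by $c\,\|\vv\|_{\f L^\infty}\mathcal{E}\|\f a\|_{\f L^3}$ (the third, through the two factors $\|\f d-\dd\|_{\f L^6}$ and $\|\nabla(\f d-\dd)\|_{\Le}$), while the lower-order term is bounded by $c\|\vv\|_{\f L^\infty}\|\f d-\dd\|_{\f L^6}^2\|\f a\|_{\f L^3}$. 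The crucial point is that the two unit-length constraints force at least one \emph{bounded} factor $\f d-\dd$, so that each summand keeps the full quadratic smallness $\mathcal{E}$ or $\mathcal{W}^{\nicefrac12}\mathcal{E}^{\nicefrac12}$; without this decomposition one is left with a single factor $\|\f d-\dd\|_{\f L^6}$ multiplying an $O(1)$ quantity, which does not close.

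Finally I would collect all contributions, apply Young's inequality to every mixed term, $\mathcal{W}^{\nicefrac12}\mathcal{E}^{\nicefrac12}\|\f a\|_{\f L^3}\le\delta\mathcal{W}+C_\delta\|\f a\|_{\f L^3}^2\mathcal{E}$, integrate over $s\in(0,t)$, and absorb the dissipative squares into $\delta\int_0^t\mathcal{W}$ after rescaling $\delta$; what remains is $\int_0^t\mathcal{K}\mathcal{E}$ with $\mathcal{K}=C_\delta(\|\vv\|_{\f L^\infty}^2+\|\vv\|_{\f W^{1,3}}^2+\|\f a\|_{\f L^3}^2)(\|\f d\|_{L^\infty(\f L^\infty)}\|\dd\|_{L^\infty(\f L^\infty)}^2\|\dd\|_{L^\infty(\f W^{1,3})}^2+1)$, as asserted.
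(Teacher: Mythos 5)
Your proposal is correct, and its skeleton coincides with the paper's own proof: both insert the director equations~\eqref{eq:mdir} and~\eqref{dir} to replace $\rot{\f d}\t\f d-\rot{\dd}\t\dd$ by the convective, rotational, $\sy{v}$- and $\f q$-terms, regroup so that every summand carries two smallness factors ($\f d-\dd$, $\f v-\vv$, $\sy v\f d-\syv\dd$, $\f d\times\f q-\dd\times\tq$, \dots), and close with H\"older, Lemma~\ref{lem:vab} and Young's inequality, absorbing the dissipative squares into $\delta\mathcal{W}$. Where you genuinely deviate is in the critical quadratic convective terms: the paper keeps the factor $\f d-\dd$ in integral norms, which forces it through $\|\f d-\dd\|_{\f L^{12}}^4$ and the weighted Young-measure quantity $\ll{\nu_s,|\f S-\nabla\dd|^2\,|\f h-\dd|^2}$, i.e.\ through parts~\eqref{absch3}--\eqref{absch4} of Lemma~\ref{Sobolev}, whereas you exploit the unit-sphere constraint $|\f d|=|\dd|=1$ to bound $\|\f d-\dd\|_{\f L^\infty}\le 2$ and then only need $\|\f d-\dd\|_{\f L^6}^2+\|\nabla(\f d-\dd)\|_{\Le}^2\le c\,\mathcal{E}$, which indeed follows from~\eqref{absch1} and the Jensen step in its proof. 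This is legitimate and shorter for this particular lemma, because after inserting the equation all quantities are genuine functions of $(\f x,t)$, so the pointwise bound is available; the paper's heavier estimates are the ones that survive when the director enters only through the generalized Young measure (as in Proposition~\ref{prop:int}), where no pointwise unit bound on $\f h$ holds under the concentration measure, which is presumably why the author argues uniformly with \eqref{absch3}--\eqref{absch4}. Two minor remarks: your constants carry $\|\dd\|_{L^\infty(\f W^{1,\infty})}$ (from estimating $((\f v-\vv)\cdot\nabla)\dd$ with $\nabla\dd\in\f L^\infty$) where the stated $\mathcal{K}$ shows $\|\dd\|_{L^\infty(\f W^{1,3})}$; this is harmless, since the paper explicitly allows the generic constant to depend on $\|\dd\|_{L^\infty(\f W^{1,\infty})}$, and could be avoided by using H\"older exponents $(6,3,6,3)$ as in the paper's estimate of~\eqref{gleichung1}. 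Also, the identity $\rot{\f d}\t\f d=-\f d\times(\f v\cdot\nabla)\f d+\f d\times\sk v\f d-\lambda\,\f d\times\sy v\f d-\f d\times\f q$ follows directly from~\eqref{eq:mdir}, so the detour through Corollary~\ref{prop:e} and $\rot{\f d}\rot{\f d}^T\rot{\f d}=\rot{\f d}$ is valid but unnecessary.
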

\begin{proof}
First we insert equation~\eqref{eq:mdir} and \eqref{dir} for the measure-valued solution and the strong solution respectively. This gives
\begin{subequations}\label{gleichung}
\begin{align}
\int_0^t \left (  \rot{\f d      -\dd      }^T \left ( \rot{\f d       } \t \f d       - \rot{\dd      } \t \dd       \right ) , \f a       \right ) \de s 
={}& \int_0^t \left ( \left ( \rot{\f d       } - \rot{\dd      } \right )^T \left ( \rot{\dd      } ( \vv       \cdot \nabla ) \dd      - \rot{\f d       } ( \f v        \cdot \nabla ) \f d         \right ) , \f a       \right ) \de s\label{gleichung1}\\
&+  \int_0^t \left ( \left ( \rot{\f d       } - \rot{\dd      } \right )^T \left (      \rot{\f d       } \sk v  \f d -\rot{\dd      } \skv \dd          \right ) , \f a       \right ) \de s\label{gleichung2}\\
&+ \lambda \int_0^t \left ( \left ( \rot{\f d}     -\rot{\dd      } \right )^T \left ( \rot{\dd      } \syv \dd      - \rot{\f d       } \sy v\f d         \right ) , \f a       \right ) \de s\label{gleichung3}\\
&+  \int_0^t \left ( \left ( \rot{\f d       } - \rot{\dd      } \right )^T \left ( \rot{\dd      } \tq      - \rot{\f d       } \f q          \right ) , \f a       \right ) \de s\,.\label{gleichung4}
\end{align}
\end{subequations}
The dependence on $s$ is not written out to remain the lucidity.
We are going to estimate the lines individually
For the line~\eqref{gleichung1} we observe
\begin{align*}
\int_0^t& \left ( \left ( \rot{\f d       } - \rot{\dd      } \right )^T \left ( \rot{\dd      } ( \vv       \cdot \nabla ) \dd      - \rot{\f d       } ( \f v        \cdot \nabla ) \f d         \right ) , \f a       \right ) \de s \\
={}&
 \int_0^t \left ( \left ( \rot{\f d      -\dd      } \right )^T \left ( (\rot{\dd      -\f d }) ( \vv       \cdot \nabla ) \dd + \rot{\f d } ( \vv \cdot \nabla) ( \dd - \f d )           + \rot{\f d } \left (( \vv - \f v ) \cdot\nabla\right ) \f d  \right ) , \f a       \right ) \de s
\\
={}&
 \int_0^t \left ( \left ( \rot{\f d      -\dd      } \right )^T \left ( (\rot{\dd      -\f d }) ( \vv       \cdot \nabla ) \dd + \rot{\f d- \dd } ( \vv \cdot \nabla) ( \dd - \f d )     + \rot{ \dd } ( \vv \cdot \nabla) ( \dd - \f d )        \right ) , \f a       \right ) \de s\\
&+\int_0^t \left ( \left ( \rot{\f d       -\dd      } \right )^T \left (  \rot{\f d } \left (( \vv - \f v ) \cdot\nabla\right )( \f d-\dd)        \right )          , \f a       \right ) \de s\\
&+\int_0^t \left ( \left ( \rot{\f d       -\dd      } \right )^T \left ( \rot{\f d -\dd } \left (( \vv - \f v ) \cdot\nabla\right ) \dd                    +\rot{\dd } \left (( \vv - \f v ) \cdot\nabla\right ) \dd            \right )          , \f a       \right ) \de s\,,
\end{align*}
which can be estimated by
\begin{align*}
\int_0^t& \left ( \left ( \rot{\f d       } - \rot{\dd      } \right )^T \left ( \rot{\dd      } ( \vv       \cdot \nabla ) \dd      - \rot{\f d       } ( \f v        \cdot \nabla ) \f d         \right ) , \f a       \right ) \de s \\
\leq{}&c \int_0^t\| \vv \|_{\ L^\infty} \|\nabla \dd \|_{\f L^3} \| \f a\|_{\f L^3} \| \f d - \dd \|_{\f L^6}^2\de s \\ 
&+ c \int_0^t \| \vv\|_{\f L^\infty} \| \f a\|_{\f L^3}\left (\| \f d - \dd \|_{\f L^{12} } ^4+  \| \dd \|_{\f L^\infty} \| \f d - \dd \|_{\f L^6 } ^2 + \ll{\nu_s, | \f S - \nabla \dd | ^2 } \right ) \de s  \\
& +  \delta \int_0^t \| \vv - \f v \|_{\f L^6}^2 \de s + C_\delta  + \int_0^t\| \f d \|_{\f L^\infty}^2\| \f a\|_{\f L^3}^2  \ll{\nu_s, | \f S - \nabla \dd | ^2 | \f h -\dd|^2 }   \de s \\
& +  C_\delta   \int_0^t\| \f d \|_{\f W^{1,3}}^2\| \f a\|_{\f L^3}^2  \left ( \| \f d - \dd \|_{\f L^{12} } ^4+  \| \dd \|_{\f L^\infty}^2 \| \f d - \dd \|_{\f L^6 } ^2 \right ) \de s  \,.
\end{align*}
For line~\eqref{gleichung2}, we get a similar rearrangement
\begin{align*}
  \int_0^t& \left ( \left ( \rot{\f d       } - \rot{\dd      } \right )^T \left (      \rot{\f d       } \sk v  \f d -\rot{\dd      } \skv \dd          \right ) , \f a       \right ) \de s\\
={}&\int_0^t \left ( \left ( \rot{\f d       } - \rot{\dd      } \right )^T \left (   \rot{\f d       } \left (\sk v - \skv\right ) \f d + \rot{\f d } \skv ( \f d - \dd)      +   \left (\rot{\f d }-\rot{\dd      }\right ) \skv \dd  \right ) , \f a       \right ) \de s\\
={}&\int_0^t \left ( \left ( \rot{\f d       } - \rot{\dd      } \right )^T \left (   \rot{\f d       } \left (\sk v - \skv\right ) (\f d - \dd) +\rot{\f d       } \left (\sk v - \skv\right )  \dd \right ) , \f a       \right ) \de s\\
&+  \int_0^t \left ( \left ( \rot{\f d       } - \rot{\dd      } \right )^T \left (   \rot{\f d } \skv ( \f d - \dd)      +   \left (\rot{\f d }-\rot{\dd      }\right ) \skv \dd   \right ) , \f a       \right ) \de s
\end{align*}
such that the terms of line~\eqref{gleichung2} can be estimated by
\begin{align*}
 \int_0^t& \left ( \left ( \rot{\f d       } - \rot{\dd      } \right )^T \left (      \rot{\f d       } \sk v  \f d -\rot{\dd      } \skv \dd          \right ) , \f a       \right ) \de s\\
\leq{}&
\delta \int_0^t \| \sk v - \skv \|_{\Le}^2 \de s + C_\delta \int_0^t  \| \f d \|_{\f L^\infty} ^2 \| \f a \|_{\f L^3}^2\left (\| \f d - \dd \|_{\f L^{12}}^4 +\| \dd \|_{\f L^\infty}^2 \| \f d - \dd \|_{\f L^6}^2\right )\de s \\
&+ c \int_0^t\left ( \| \f d \|_{\f L^\infty} +\| \dd \|_{\f L^\infty} \right ) \| \skv \|_{\f L^3}\| \f a \|_{\f L^3}  \| \f d - \dd \|_{\f L^6}^2\de s\,.
\end{align*}
For line~\eqref{gleichung3} we observe
\begin{align*}
 \lambda \int_0^t& \left ( \left ( \rot{\f d       } - \rot{\dd      } \right )^T \left ( \rot{\dd      } \syv \dd      - \rot{\f d       } \sy v\f d         \right ) , \f a       \right ) \de s\\
={}& \lambda \int_0^t \left ( \left ( \rot{\f d       -\dd      } \right )^T \left (   \left (  \rot{\dd-\f d       }\right ) \syv\dd     + \rot{\f d    } \left (\syv \dd - \sy v \f d \right )      \right ) , \f a       \right ) \de s
\\
={}& \lambda \int_0^t \left ( \left ( \rot{\f d       -\dd      } \right )^T \left (   \left (  \rot{\dd-\f d       }\right ) \syv\dd     + \rot{\f d  - \dd     } \left (\syv \dd - \sy v \f d \right )      \right ) , \f a       \right ) \de s
\\
& \lambda \int_0^t \left ( \left ( \rot{\f d       -\dd      } \right )^T    \rot{ \dd     } \left (\syv \dd - \sy v \f d     \right ) , \f a       \right ) \de s
\,.
\end{align*}
Thus, the terms of lines \eqref{gleichung3} and \eqref{gleichung4} can be estimated by
\begin{align*}
 \int_0^t& \left ( \left ( \rot{\f d       } - \rot{\dd      } \right )^T \left ( \lambda\rot{\dd      } \syv \dd      - \lambda\rot{\f d       } \sy v\f d         \right ) +\rot{\dd      } \tq      - \rot{\f d       } \f q  , \f a       \right ) \de s\\
\leq{}& c \int_0^t\| \dd \|_{\f L^\infty}  \| \syv \|_{\f L^3} \|\f a \|_{\f L^3} \| \f d - \dd \|_{\f L^6 }^2 \de s \\
&+ \delta \int_0^t \| \sy v \f d - \syv \dd \|_{\Le}^2 \de s + C_{\delta}  \int_0^t \| \f a\|_{\f L^3}^2\left (\| \f d - \dd \|_{\f L^{12} }^4+\| \dd \|_{\f L^\infty}   \| \f d - \dd \|_{\f L^6 }^2 \right )\de s\\
&+ \delta \int_0^t \| \f d \times \f q - \dd \times \tq  \|_{\Le}^2 \de s + C_{\delta }  \int_0^t \| \f a\|_{\f L^3}^2 \| \f d - \dd \|_{\f L^6}^2\de s
\end{align*}

The above estimates together with Lemma~\ref{lem:vab} and the definitions~\eqref{relEn} and~\eqref{relW} prove the assertion.

\end{proof}

\section{Integration-by-parts formulae\label{sec:int}}
\begin{proposition}[Integration-by-parts fomula]\label{prop:int}
Let $(\f v, \f d)$ be a measure-valued solution and $( \vv , \dd )$ a strong solution. Then the following integration-by-parts formulae hold for a.e. $t,s\in(0,T)$. 
\begin{align}
( \f v ( t) , \vv(t)) - ( \f v (s) , \vv(s)) = \int_s^t( \f v ( \tau) , \t \vv (\tau) ) + ( \t \f v ( \tau ) , \vv( \tau)) \de \tau \, ,\label{intv}\\
\| \f d(t) - \dd(t) \|_{\Le}^2- \| \f d(s) -\dd(s)\|_{\Le}^2  \leq \delta\inttes{\mathcal{W}(\tau)}+ C_\delta \inttes{\mathcal{E}(\tau) }
 \, , \label{intd1}
\end{align}
\begin{align}
\begin{split}
\left ( \nabla \f d(t) ;\, \f \Lambda : \nabla \dd(t)\right ) &- \left (  \nabla \f d(s) ;\, \f \Lambda : \nabla\dd(s)\right )\\ 
\leq{}&  \inttes{\left [ \ll{\nu_t,  \f S   :\f \Lambda :\left (\f \Upsilon : \left (\rot{\dd}\t \dd \o \f S \right ) \right )}+ \left ( \nabla \f d ; \f \Lambda : \rot{\f d }^T \nabla ( \rot{\dd} \t \dd )   \right )\right ]}\\
&- \inttes{\left ( \rot{\dd}  \Lap \dd , \rot{\f d } \t \f d  \right )}+ \delta \inttes{\mathcal{W}(\tau)}+ c \inttes{\mathcal{E}(\tau)} \,,
\end{split}\label{intd2}
\end{align}
 and
 \begin{subequations}\label{intd}
\begin{align}
\big ( \nabla \f d(t) \o \f d (t) \dreidotkom \f \Theta&  \dreidots \nabla \dd(t) \o \dd (t) \big )
-
\left ( \nabla \f d(s) \o \f d (s) \dreidotkom \f \Theta \dreidots \nabla \dd(s) \o \dd (s) \right )\notag\\
&-
 \left ((\nabla \f d (t)-\nabla \dd(t)  )\o (\f d(t) -  \dd(t) )\right ) \dreidots \f \Theta \dreidots (\nabla \dd(t)  \o \dd(t) )\label{intdd5} \\&+\left ((\nabla \f d (s)-\nabla \dd(s)  )\o (\f d(s) -  \dd(s) )\right ) \dreidots \f \Theta \dreidots (\nabla \dd(s)  \o \dd(s) )\notag
\\
\leq {}& \inttes{\phi\left ( \rot{\f d} \t \f d , \rot{\dd} \left (- \di \left ( \dd \cdot \f \Theta \dreidots \nabla \dd \o \dd \right )+ \nabla \dd : \f \Theta \dreidots \nabla \dd \o \dd \right ) \right )}\label{intdd1}
\\
&+ \inttes{\phi \ll{\nu_\tau , \f S \o \f h \dreidots \f \Theta \dreidots \left (\f \Upsilon : ( \rot \dd^T  \t \dd \o \f S )\right ) \o \f h }}\label{intdd2}
\\&+ \inttes{\phi \left ( \nabla \f d \o \f d \dreidotkom \f \Theta \dreidots \rot{\f d}^T \nabla \left (\rot \dd \t \dd \right ) \o \f d \right )}\label{intdd3}\\
&+ \inttes{\phi \ll{\nu_\tau , \f S \o \f h \dreidots \f \Theta \dreidots \f S \o \rot{\f h}^T \rot \dd \t \dd }}+ \delta \inttes{\mathcal{W}} + c \inttes{\mathcal{E}}\,.\label{intdd4}
\end{align}
\end{subequations}

\end{proposition}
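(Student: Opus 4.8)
The four identities share a single structure: each is the increment over $[s,t]$ of a bilinear quantity that couples the low-regularity measure-valued solution $(\f v,\f d)$ with the smooth strong solution $(\vv,\dd)$, and the plan is to treat all of them by the same three-step scheme. First I would justify the underlying product rule. For \eqref{intv}, since $\f v\in L^2(0,T;\V)$ with $\t\f v\in L^2(0,T;(\f W^{1,3}_{0,\sigma})^*)$ while $\vv\in L^2(0,T;\f W^{1,3}_{0,\sigma})$ with $\t\vv\in L^2(0,T;\Vd)$, the two dualities pair up and a standard integration-by-parts lemma for abstract functions in Bochner spaces yields \eqref{intv} at once. For the director pairings I would instead test the weak director equation \eqref{eq:mdir} (respectively \eqref{dir}) against time-dependent test functions assembled from the smooth factors, which is admissible by the regularity in Definition~\ref{def:strong}; the fundamental theorem of calculus then rewrites each left-hand side as a time integral of a derivative.

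Second, I would substitute the evolution equations to expose the algebraic structure. Wherever $\t\f d$ or $\t\dd$ appears I would invoke Corollary~\ref{prop:e}, writing $\t\f d=\sk v\f d-\co\f d-\rot{\f d}^T\rot{\f d}(\lambda\sy v\f d+\f q)$ and the analogous expression for $\t\dd$, and for \eqref{intd2} and \eqref{intd} I would insert the explicit form \eqref{qdef} of the variational derivative, split into its $\Lap$-part and its $\f\Theta$-part. Integrating by parts in space then transfers the surviving derivatives onto the smooth strong-solution quantities $\rot{\dd}\t\dd$, $\Lap\dd$ and $\f\Theta\dreidots\nabla\dd\o\dd$, producing exactly the explicitly displayed ``main terms'' on the right-hand sides of \eqref{intd2} and of lines \eqref{intdd1}--\eqref{intdd4}. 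The $\rot{\cdot}$- and $\f\Upsilon$-contractions arise from rewriting the cross-product director equations \eqref{eq:mdir}, \eqref{dir} (cf.~\eqref{eq:q}) through the $\rot{\cdot}$-notation and the tensor identities of the notation section and the Appendix.

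Third, I would estimate the remaining cross terms. After the cancellations, every leftover term is a product of one difference factor ($\f d-\dd$, or $\f S-\nabla\dd$ under $\nu_t$, or $\sk v-\skv$), a velocity gradient or a derivative of $\dd$, and a bounded factor. These are controlled by Hölder's inequality together with Korn's inequality (Lemma~\ref{lem:vab}) and the measure-valued Sobolev estimates \eqref{absch1}--\eqref{absch4} of Lemma~\ref{Sobolev}, while the genuinely time-derivative cross terms are absorbed using Lemma~\ref{lem:timederi}; each such term is then bounded by $\delta\,\mathcal{W}(\tau)+C_\delta\,\mathcal{E}(\tau)$ and integrated to give the $\delta\int_s^t\mathcal{W}+c\int_s^t\mathcal{E}$ tails appearing in \eqref{intd1}, \eqref{intd2} and \eqref{intd}. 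For \eqref{intd1} this already closes the argument; for the other two it reduces the inequalities to the displayed main terms plus the controllable remainder.

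The delicate step is \eqref{intd}, the increment of the $\f\Theta$-energy. Because the $\f\Theta$-part of the Oseen--Frank energy \eqref{tensoren} is quadratic both in $\f d$ and in $\nabla\f d$, its time derivative generates many cross terms, and the measure-valued representation forces one to replace $\nabla\f d\o\f d$ by $\f S\o\f h$ under $\nu_t$, so that the oscillation and concentration defects no longer cancel automatically. The role of the two correction lines \eqref{intdd5} is precisely to subtract the mixed contribution $(\nabla\f d-\nabla\dd)\o(\f d-\dd)\dreidots\f\Theta\dreidots(\nabla\dd\o\dd)$, which cannot be represented exactly by the measure, so that what remains is expressible through the displayed $\f\Theta$-pairings plus a remainder controlled by \eqref{absch2} and \eqref{absch3}. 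Tracking the exact tensor contractions and signs, via the Appendix identities and the skew-symmetry built into $\f\Lambda$ and $\f\Theta$, is where I expect the bulk of the work and the genuine risk of error to lie.
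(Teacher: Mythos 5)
Your overall architecture (product rule in time, rearrangement into main terms plus remainders, absorption of remainders as $\delta\,\mathcal{W}+C_\delta\,\mathcal{E}$ via Lemmas~\ref{lem:vab}, \ref{Sobolev} and \ref{lem:timederi}, then passing to a.e.\ $s,t$ by Lemma~\ref{prop:lemvar}) matches the paper, but your second step contains a structural error. You propose to invoke Corollary~\ref{prop:e} and insert \eqref{qdef} ``wherever $\t\f d$ or $\t\dd$ appears''; this cannot ``produce exactly the displayed main terms'', because the right-hand sides of \eqref{intd2} and of lines \eqref{intdd1}--\eqref{intdd4} still contain $\rot{\dd}\t\dd$ and $\rot{\f d}\t\f d$ explicitly. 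A blanket substitution of the evolution equations yields a different, expanded statement, and reassembling it into the displayed form amounts to undoing the substitution. In the paper's proof the equations \eqref{eq:mdir} and \eqref{dir} enter this proposition \emph{only} through Lemma~\ref{lem:timederi}, i.e.\ only to control difference terms of the type $\left(\rot{\f d}\t\f d-\rot{\dd}\t\dd,\,(\rot{\f d}-\rot{\dd})\f a\right)$; the variational derivatives \eqref{qdef}, \eqref{eq:q} are inserted only afterwards, in Corollary~\ref{cor:cor1}. What generates the main terms is not the equation but the unit-norm constraint via Lemma~\ref{lem:norm1}, $\t\dd=\rot{\dd}^T\rot{\dd}\t\dd$, whose spatial gradient expands as $\nabla(\rot{\dd}^T\rot{\dd}\t\dd)=\f\Upsilon:\left((\rot{\dd}\t\dd)\o\nabla\dd\right)+\rot{\dd}^T\nabla(\rot{\dd}\t\dd)$ --- this, not the cross-product form of the equations, is the source of the $\f\Upsilon$-contractions in \eqref{intd2} and \eqref{intdd2}.

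The second, more serious gap is the justification of the product rule itself. For the measure-valued solution one only has $\f d\in L^\infty(0,T;\He)$ with $\t\f d\in L^2(0,T;\f L^{3/2})$, so $\nabla\t\f d$ does not exist and the pairing $(\nabla\t\f d;\f\Lambda:\nabla\dd)$ occurring in a formal differentiation of $t\mapsto(\nabla\f d(t);\f\Lambda:\nabla\dd(t))$ is undefined; moreover, testing \eqref{eq:mdir} with smooth test functions gives access only to $\f d\times\t\f d$ paired against $\f L^3$-functions and yields no information about gradients of $\t\f d$, so your plan of ``testing the weak director equation against test functions assembled from the smooth factors'' cannot deliver the identity. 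The paper resolves this by approximating $\f d$ with the sequence $\{\f d_n\}$ furnished by the generalized gradient Young measure structure (Definition~\ref{def:gradmeas}, Remark~\ref{rem:boundary}), for which the spatial integration by parts $(\nabla\t\f d_n;\f\Lambda:\nabla\dd)=(\t\f d_n,-\Lap\dd)$ is legitimate --- the boundary terms vanish precisely because the Dirichlet data are constant in time --- and then passing to the limit; the analogous approximation step \eqref{nichtin} is indispensable for the $\f\Theta$-term in \eqref{intd}. Without this device (or an equivalent smoothing argument) your appeal to the fundamental theorem of calculus is not justified at the available regularity. Your remaining ingredients --- the replacement of $\nabla\f d\o\f d$ by $\f S\o\f h$ under $\nu_t$ at the cost of quadratic defect terms controlled by \eqref{absch2}, \eqref{absch3}, and the reading of the correction line \eqref{intdd5} --- do agree with the paper's treatment.
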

\begin{proof}
 Let $\phi\in \C_c^\infty(0,T)$ we calculate the weak time derivative and neglect the dependence on the time variable $t$ under the integral for more convenient writing.

Remark that $\f v$ and $\vv$ fulfill the regularity assumptions
\begin{align*}
\f v \in L^2(0,T; \V)\cap W^{1,2}(0,T; ( \f W^{1,3}_{0,\sigma})^*) \quad \text{and} \quad \vv\in L^2(0,T; (\f W^{1,3}_{0,\sigma})^*) \cap W^{1,2}(0,T; \Vd)\,.
\end{align*}  
 We consider the weak derivative of the $\Le$-product of $\f v $ and $\vv$
 \begin{align*}
 \int_0^T \phi'(t) ( \f v (t) , \vv(t)) \de t = \int_0^T \phi(t) \left ( ( \t \f v(t) , \vv(t)) + ( \f v (t) , \t \vv(t)) \right ) \de t \,. 
 \end{align*}
 The variational lemma implies the assertion~\eqref{intv}.
 
To prove the formula~\eqref{intd1} on can simply calculate the weak derivative, since the regularity of $\f d$ is sufficient to do so:
\begin{align*}
\intte{\phi'\left  \| \f d - \dd \right \|_{\Le}^2 } =2  \intte{ \phi \left ( \t \f d - \t \dd , \f d - \dd \right )}
\end{align*}
Applying Lemma~\ref{lem:norm1} and regrouping the terms shows
\begin{align*}
\intte{\phi'\left  \| \f d - \dd \right \|_{\Le}^2 } ={}& 2  \intte{ \phi \left ( \rot{\f d}^T \rot{\f d} \t \f d - \rot \dd ^T \rot \dd \t \dd , \f d - \dd \right )}\\
={}& 2 \intte{\phi \left (  \rot \dd \t \dd , \left ( \rot \dd - \rot { \f d} \right ) ( \f d - \dd)\right )} \\
&+ 2 \intte{\phi \left ( \rot { \f d} \t \f d - \rot\dd \t \dd , (\rot{ \f d} - \rot \dd ) \dd \right )}
\end{align*}
The right hand side can be estimated by  Lemma~\ref{lem:timederi} and by
\begin{align*}
\intte{\phi \left (  \rot \dd \t \dd , \left ( \rot \dd - \rot { \f d} \right ) ( \f d - \dd)\right )} \leq \| \dd\|_{L^\infty(\f L^\infty)}\| \t \dd\|_{L^\infty(\f L^\infty)} \intte{\phi \| \f d - \dd \|_{\Le}^2 } \,.
\end{align*}
  Lemma~\ref{prop:lemvar} implies the formula~\eqref{intd1}.

For the next integration-by-parts formula~\eqref{intd2}, the weak time derivative of the quadratic term of the gradient of the director is calculated. Remark that there is no additional regularity known for the time derivative of $\nabla \t \f d $. We write
\begin{align*}
-\intte{\phi' \left ( \nabla \f d ; \f \Lambda : \nabla \dd \right )}
= \intt{\phi\left (\nabla \f d ; \f \Lambda : \nabla \t \dd \right )- \left ( \nabla \f d ; \f \Lambda :\t( \phi \nabla \dd )  \right )}
\end{align*}
The measure-valued solution is approximated by smooth functions $\{\f d_n\}$ fulfilling the same boundary conditions, such that $\f d_n \ra \f d $ in $L^2(0,T; \He)\cap W^{1,2}(0,T; \f L^{3/2})$.
For this approximation, the following integration-by-parts holds true
\begin{align*}
-\intt{    \nabla  \f d _n  ;  \t \left (\phi\f \Lambda: \nabla \dd   \right  ) } 
={}&\intte{  \phi \left ( \nabla \t  \f d_n ; \f \Lambda   : \nabla \dd  \right )} 
\\
={}& \intte{\phi \left ( \t   \f d  _n ,  - \Lap \dd\right )   } \,.
\end{align*}
The boundary values vanish since  $\phi \in \C_c^\infty(0,T)$ and since the boundary values of $ \f d_n   $ are constant  in time and, hence, its time derivative vanishes on the boundary.
Going to the limit in the approximation gives 
\begin{align*}
-\intte{    \left ( \nabla  \fk d   ; \t \left (\phi \f \Lambda : \nabla \dd \right )\right ) } = \intte{\phi \left ( \t   \fk d   ,  -\Lap \dd \right )   } \,.
\end{align*}
 Lemma~\ref{lem:norm1} implies that
\begin{align}
\begin{split}
-\intte{    \phi' \left ( \nabla \f d ; \f \Lambda : \nabla \dd \right ) } 
={}& \intte{\phi\left ( \left ( \nabla \f d ; \f \Lambda :\nabla \left (\rot{\dd} ^T \rot{\dd} \t \dd\right )\right )+ \left (\rot{\f d }^T \rot{\f d} \t \f d , - \Lap \dd \right )\right )}
\\
={}& \intte{\phi\left ( \left ( \nabla \f d ; \f \Lambda :\left (\f \Upsilon:  \left (\rot{\dd} \t \dd\right )\otimes \nabla \dd + \rot{\dd }^T \nabla ( \rot{\dd} \t \dd )\right )   \right )+ \left (\rot{\f d }^T \rot{\f d} \t \f d , - \Lap \dd \right )\right )}
\end{split}
\end{align}
To show the formula~\eqref{intd2}, we add and subtract the desired terms of the right hand side of~\eqref{intd2}. 
Some additional rearrangement of the terms show
\begin{align}
\begin{split}
-\int_0^T&    \phi' \left ( \nabla \f d ; \f \Lambda : \nabla \dd \right ) \de t \\
={}& \intte{\phi  \left (\ll{\nu_t, \f S   :\f \Lambda :\f \Upsilon : \left (\rot{\dd}\t \dd \o \f S \right ) }+ \left ( \nabla \f d ; \f \Lambda : \rot{\f d }^T \nabla ( \rot{\dd} \t \dd )   \right )  -\left ( \rot{\dd}  \Lap \dd , \rot{\f d } \t \f d  \right )
\right )}\\
&+ \intte{ \phi\left ( \ll{\nu_t, \f S  : \f \Lambda : \f \Upsilon : \rot{\dd}\t \dd \o (  \nabla \dd - \f S )}
+ \left ( \nabla \f d ; \f \Lambda : \left (\rot{\dd}-\rot{\f d}\right ) \nabla ( \rot{\dd} \t \dd) \right )\right )
}
\\
&+ \intte{\phi\left (( \rot{\dd}-\rot{\f d } )^T  \rot{\f d }\t \f d  ,  \Lap \dd \right )}
\\
\end{split}
\end{align}
Adding and subtracting the term
\begin{align*}
 \intte{\phi\left (( \rot{\dd}-\rot{\f d } )^T  \rot{\dd}\t \dd  , - \Lap \dd \right )}= &
\intte{ \phi \left (\nabla \dd  : \f \Lambda : \f \Upsilon : \rot{\dd}\t \dd \o ( \nabla\dd - \nabla \f d )\right )}\\
&+ \intte{\phi\left ( \nabla \dd ; \f \Lambda : \left (\rot{\dd}-\rot{\f d}\right ) \nabla ( \rot{\dd} \t \dd) \right ) }
\end{align*}
gives 
\begin{subequations}\label{calc000}
\begin{align}
-\int_0^T&    \phi' \left ( \nabla \f d ; \f \Lambda : \nabla \dd \right ) \de t \notag\\
={}& \intte{\phi  \left (\ll{\nu_t, \f S   :\f \Lambda :\f \Upsilon : \left (\rot{\dd}\t \dd \o \f S \right ) }+ \left ( \nabla \f d ; \f \Lambda : \rot{\f d }^T \nabla ( \rot{\dd} \t \dd )   \right )  -\left ( \rot{\dd}  \Lap \dd , \rot{\f d } \t \f d  \right )
\right )}\notag\\
&+\intte{ \phi\left (\ll{\nu_t, (\f S -\nabla \dd ) : \f \Lambda : \f \Upsilon : \rot{\dd}\t \dd \o (  \nabla \dd- \f S  )} 
+\left ( \nabla \f d -\nabla \dd ; \f \Lambda : \left (\rot{\dd}-\rot{\f d}\right ) \nabla ( \rot{\dd} \t \dd) \right )
\right )}\label{calc00}\\
&+ \intte{\phi\left (( \rot{\dd}-\rot{\f d } )^T ( \rot{\dd}\t \dd - \rot{\f d }\t \f d ) , - \Lap \dd \right )}
\label{calc0}
\end{align}
\end{subequations}
We can estimate  the terms of line~\eqref{calc00} by 
\begin{align*}
\int_0^T& \phi\left (\ll{\nu_t, (\f S -\nabla \dd ) : \f \Lambda : \f \Upsilon : \rot{\dd}\t \dd \o (  \nabla \dd- \f S  )} 
+\left ( \nabla \f d -\nabla \dd ; \f \Lambda : \left (\rot{\dd}-\rot{\f d}\right ) \nabla ( \rot{\dd} \t \dd) \right )
\right )
\\
\leq{}& c \| \dd \|_{L^\infty(\f L^\infty)}  \intte{\phi\| \t\dd \|_{\f L^\infty} \ll{\nu_t, | \f S- \nabla \dd|^2}} \\&+ c  \intte{\phi( \| \nabla \dd\|_{L^\infty(\f L^3)} \| \t \dd\|_{\f L^\infty}+ \|  \dd\|_{L^\infty(\f L^\infty)}\| \nabla \t\dd\|_{\f L^3})\left ( \ll{\nu_t, |\f S -\nabla \dd|^2}+ \| \f d - \dd \|_{\f L^6}^2\right )}
\end{align*}
and terms of line~\eqref{calc0} by Lemma~\ref{lem:timederi}. 
Inserting those estimates back into \eqref{calc000} and applying 
Lemma~\ref{prop:lemvar} implies the integration-by-parts formula~\eqref{intd2}.


Finally, we are going to prove the integration-by-parts formula~\eqref{intd}.
Consider the following term
\begin{align*}
-\int_0^T&\phi'\left  (\nabla \fk d \o \fk d \dreidotkom \f \Theta \dreidots \nabla \dd \o \dd\right ) + \intte{\left ( (\nabla \fk d-\nabla  \dd)  \o (\fk d-\dd) \dreidotkom \f \Theta \dreidots \t \left ( \phi \nabla \dd \o \dd\right )   \right )}\hspace{-2cm}\\
={}& - \intte{\left ( \nabla \fk d \o \fk d \dreidotkom \f \Theta \dreidots \t \left ( \phi\nabla \dd \o \dd\right )\right ) }+ \intte{\phi\left ( \nabla \fk d \o \fk d \dreidotkom \f \Theta \dreidots \t\left (\nabla \dd \o \dd\right ) \right )}\hspace{-2cm}
\\
&+ \intte{\left ( \nabla \fk d  \o \fk d \dreidotkom \f \Theta \dreidots \t \left ( \phi \nabla \dd \o \dd\right )   \right )}
- \intte{\left ( \nabla  \dd  \o \fk d \dreidotkom \f \Theta \dreidots \t \left ( \phi \nabla \dd \o \dd\right )   \right )}\hspace{-2cm}
\\
&
- \intte{\left ( \nabla \fk d  \o \dd \dreidotkom \f \Theta \dreidots \t \left ( \phi \nabla \dd \o \dd\right )   \right )}
+ \intte{\left ( \nabla  \dd  \o \dd \dreidotkom \f \Theta \dreidots \t \left ( \phi \nabla \dd \o \dd\right )   \right )}\hspace{-2cm}
\end{align*}
The first and the third term on the right-hand cancel each other.
Calculating the weak time derivatives of the products gives
\begin{align}
-\int_0^T&\phi'\left  (\nabla \fk d \o \fk d \dreidotkom \f \Theta \dreidots \nabla \dd \o \dd\right )\de t  + \intte{\left ( (\nabla \fk d-\nabla  \dd)  \o (\fk d-\dd) \dreidotkom \f \Theta \dreidots \t \left ( \phi \nabla \dd \o \dd\right )   \right )}\notag\\
={}&  \intte{\phi\left ( \nabla \fk d \o \fk d \dreidotkom \f \Theta \dreidots \nabla\t \dd \o \dd \right )}\notag
+ \intte{\phi\left ( \nabla \fk d \o \fk d \dreidotkom \f \Theta \dreidots \nabla \dd \o \t \dd \right )}
\\
&+ \intte{\phi\left ( \nabla  \dd  \o \t \fk d \dreidotkom \f \Theta \dreidots    \nabla \dd \o \dd  \right )}
+ \intte{\phi\left ( \nabla \t \dd  \o  \fk d \dreidotkom \f \Theta \dreidots    \nabla \dd \o \dd  \right )}\notag
\\
&
- \intte{\left ( \nabla  \fk d  ; \t \left (\phi \dd \cdot  \f \Theta \dreidots \nabla \dd \o \dd\right )   \right )}\label{joo}
+ \intte{\phi\left ( \nabla  \fk d  \o \t \dd  \dreidotkom \f \Theta \dreidots \nabla \dd \o \dd   \right )}
\\
&\notag
- \intte{\phi \left (  \nabla  \t\dd  \o \dd \dreidotkom \f \Theta \dreidots   \nabla \dd \o \dd \right )}
- \intte{\phi \left (  \nabla  \dd  \o \t\dd \dreidotkom \f \Theta \dreidots   \nabla \dd \o \dd \right )}
\end{align}
In the the first term of the line~\eqref{joo} the direktor of the measure-valued solution is again approximated by a sequence $\{ \f d_n \}\subset \C^1(\Omega \times (0,T))$ fulfilling the same boundary values, such that $ \f d_n \ra \f d$ in $L^2(0,T; \He)\cap W^{1,2}(0,T; \f L^{3/2})$. 
For this approximations, the following integration-by-parts is valid
\begin{align}
\begin{split}
-\intte{    \left ( \nabla  \f d_n   ; \t \left (\phi \dd  \cdot \f \Theta \dreidots (\nabla \dd  \o \dd )\right )\right ) } 
={}&\intte{    \left (  \nabla \t  \f d_n   , \phi \dd  \cdot \f \Theta \dreidots (\nabla \dd  \o \dd  )\right ) } 
\\
={}& \intte{\phi \left ( \t   \f  d_n   ,  -\di \left ( \dd  \cdot \f \Theta \dreidots (\nabla \dd  \o \dd  )\right )\right )   } \,.
\end{split}\label{nichtin}
\end{align}
The boundary terms vanish, since $\phi \in \C_c^\infty(0,T)$ and since the prescribed boundary value for $\f d_n$ is constant in time, so that is time derivative vanishes.
Going to the limit in the approximation gives 
\begin{align*}
-\intte{    \left ( \nabla  \fk d   ; \t \left (\phi \dd  \cdot \f \Theta \dreidots (\nabla \dd  \o \dd  )\right )\right ) } = \intte{\phi \left ( \t   \fk d   ,  -\di \left ( \dd  \cdot \f \Theta \dreidots (\nabla \dd  \o \dd  )\right )\right )   } \,.
\end{align*}

Adding and subtracting the terms
\begin{align*}
\intte{\phi \left (  \nabla \fk d \o \fk d \dreidotkom \f \Theta \dreidots \nabla \t \dd \o \f d   \right ) } + \intte{\phi \ll{\nu_t , \f S \o \f h \dreidots\f \Theta \dreidots \f S \o \t \dd   }}
\end{align*}
to~\eqref{joo} shows
\begin{subequations}\label{abs}
\begin{align}
&\hspace{-2em}-\intte{\phi'\left  (\nabla \fk d \o \fk d \dreidotkom \f \Theta \dreidots \nabla \dd \o \dd\right )} + \intte{\phi' \left ( (\nabla \fk d-\nabla  \dd)  \o (\fk d-\dd) \dreidotkom \f \Theta \dreidots   \nabla \dd \o \dd\right  )  }\notag\\
 &\hspace{-2em}+ \intte{\phi  \left ( (\nabla \fk d-\nabla  \dd)  \o (\fk d-\dd) \dreidotkom \f \Theta \dreidots   \t \left (  \nabla \dd \o \dd\right )   \right )}\hspace{-2cm} \label{abs1}\\
={}&  \intte{\phi \left ( \t \fk d ,  -\di \left ( \dd  \cdot \f \Theta \dreidots (\nabla \dd  \o \dd  )\right ) + \nabla \dd : \f \Theta \dreidots \nabla \dd \o \dd   \right )}\label{abs3}
\\
& + \intte{
\phi \left ( \nabla \t \dd \o \fk d \dreidotkom \f \Theta \dreidots   \nabla \fk d \o \fk d    \right ) }
+ \intte{\phi \ll{\nu_t , \f S \o \f h \dreidots \f \Theta \dreidots \f S \o \t \dd }}\label{abs4}\\
&+ \intte{ \phi \left ( \left ( \nabla \dd \o \dd - \nabla \fk d \o \fk d \right ) \dreidotkom \f \Theta \dreidots \nabla \t \dd \o ( \fk d - \dd ) \right ) } \label{abs5}\\
&+ \intte{\phi   \ll{\nu_t , \left ( \nabla \dd \o \dd - \f S \o \f h  \right )\dreidots \f \Theta \dreidots ( \f S -\nabla \dd ) \o \t \dd  }}\label{abs6}
\end{align}
\end{subequations}
We observe that the last two terms of the lines \eqref{abs5} and \eqref{abs6} can be estimated by
\begin{align*}
&\hspace{-2em}+ \intte{ \phi \left ( \left ( \nabla \dd \o \dd - \nabla \fk d \o \fk d \right ) \dreidotkom \f \Theta \dreidots \nabla \t \dd \o ( \fk d - \dd ) \right ) } \\
&\hspace{-2em}+ \intte{\phi   \ll{\nu_t , \left ( \nabla \dd \o \dd - \f S \o \f h  \right )\dreidots \f \Theta \dreidots ( \f S -\nabla \dd ) \o \t \dd  }}
\\
\leq{}&  \intte{\phi\| \nabla \t \dd\|_{ \f L^3}\left (\ll{\nu_t, \left | \f \Theta \dreidots \left (\f S \o \f h - \nabla \dd \o \dd \right )\right |^2 } + \| \f d -\dd \|_{\f L^6}^2 \right ) }
\\
&+   \intte{\phi\| \t \dd\|_{\f L^\infty}\left (\ll{\nu_t, \left | \f \Theta \dreidots \left (\f S \o \f h - \nabla \dd \o \dd \right )\right |^2 } + \ll{\nu_t , | \f S - \nabla \dd|^2 } \right ) }\,.
\end{align*}
The line~\eqref{abs1} can be estimated by 
\begin{align*}
&\hspace{-2em}\intte{\phi  \left ( (\nabla \fk d-\nabla  \dd)  \o (\fk d-\dd) \dreidotkom \f \Theta \dreidots   \t \left (  \nabla \dd \o \dd\right )   \right )} \\
 \leq{}& \| \dd \|_{L^\infty( \f L^\infty)} \intt{\phi\| \nabla \t \dd \|_{\f L^3} \ll{\nu_t, | \f S - \nabla \dd|^2} + \phi\left\| \f d - \dd   \right \|_{\f L^6}^2 } 
 \\
 &  \| \nabla  \dd \|_{L^\infty(\f L^3)}   \intt{\phi \| \t \dd \|_{ \f L^\infty} \ll{\nu_t, | \f S - \nabla \dd|^2} + \phi\left\| \f d - \dd   \right \|_{\f L^6}^2 } \,.
\end{align*}
The remaining terms of the right-hand side of~\eqref{abs}, in line~\eqref{abs3} and in line~\eqref{abs4} are considered further on.
Due to Lemma~\ref{lem:timederi} we see
\begin{subequations}\label{neu}
\begin{align}
\int^T_0&\phi \left ( \t \fk d ,  -\di \left ( \dd  \cdot \f \Theta \dreidots (\nabla \dd  \o \dd  )\right ) + \nabla \dd : \f \Theta \dreidots \nabla \dd \o \dd   \right )\de t \notag
\\
 +& \intte{
\phi \left ( \nabla \t \dd \o \fk d \dreidotkom \f \Theta \dreidots   \nabla \fk d \o \fk d    \right ) }
+ \intte{\phi \ll{\nu_t , \f S \o \f h \dreidots \f \Theta \dreidots \f S \o \t \dd }}\notag
\\
={}& \intte{\phi \left (  \rot{\f d }^T \rot{\f d} \t \fk d ,  -\di \left ( \dd  \cdot \f \Theta \dreidots (\nabla \dd  \o \dd  )\right ) + \nabla \dd : \f \Theta \dreidots \nabla \dd \o \dd   \right )}\label{neu1}
\\
& + \intte{
\phi \left ( \nabla \left ( \rot{\dd}^T \rot{\dd}\t \dd\right ) \o \fk d \dreidotkom \f \Theta \dreidots   \nabla \fk d \o \fk d    \right ) }
+ \intte{\phi \ll{\nu_t , \f S \o \f h \dreidots \f \Theta \dreidots \f S \o\rot{\dd}^T \rot{\dd}  \t \dd }}\,.\label{neu2}
\end{align}
\end{subequations}
Considering now the difference of the right-hand side of the above equality and the desired terms due to the integration-by-parts formula~\eqref{intd}, we get for the terms of the lines~\eqref{intdd1} and~\eqref{neu1} 
\begin{align}
\begin{split}
&\intte{\phi \left ( \rot{\f d }^T \rot{\f d} \t \fk d ,  -\di \left ( \dd  \cdot \f \Theta \dreidots (\nabla \dd  \o \dd  )\right ) + \nabla \dd : \f \Theta \dreidots \nabla \dd \o \dd   \right )}\hspace{-2cm}\\
&- 
\intte{\phi\left ( \rot{\f d} \t \f d , \rot{\dd} \left (- \di \left ( \dd \cdot \f \Theta \dreidots \nabla \dd \o \dd \right )+ \nabla \dd : \f \Theta \dreidots \nabla \dd \o \dd \right ) \right )}
\\
={}& 
\intte{\phi\left ( \rot{\f d} \t \f d - \rot \dd \t \dd , \left ( \rot{\f d}-\rot{\dd}\right ) \left (- \di \left ( \dd \cdot \f \Theta \dreidots \nabla \dd \o \dd \right )+ \nabla \dd : \f \Theta \dreidots \nabla \dd \o \dd \right ) \right )}\\
&+ \intte{\phi\left (  \rot \dd \t \dd , \left ( \rot{\f d}-\rot{\dd}\right ) \left (- \di \left ( \dd \cdot \f \Theta \dreidots \nabla \dd \o \dd \right )+ \nabla \dd : \f \Theta \dreidots \nabla \dd \o \dd \right ) \right )}\,,
\end{split}\label{calcu1}
\end{align}
as well as for the terms of the lines~\eqref{intdd3},~\eqref{intdd4} and the first term in line~\eqref{neu2}
\begin{align}
\begin{split}
& \hspace{-2em} \intte{
\phi \left ( \nabla \left ( \rot{\dd}^T \rot{\dd}\t \dd\right ) \o \fk d \dreidotkom \f \Theta \dreidots   \nabla \fk d \o \fk d    \right ) }
 \\
&\hspace{-2em} - \intte{\phi \ll{\nu_\tau , \f S \o \f h \dreidots \f \Theta \dreidots ( \rot \dd^T  \t \dd \o \f S ) \o \f h }}
- \intte{\phi \left ( \nabla \f d \o \f d \dreidotkom \f \Theta \dreidots \rot{\f d}^T \nabla \left (\rot \dd \t \dd \right ) \o \f d \right )}\\
={}& 
 \intte{\phi \ll{\nu_\tau , \f S \o \f h \dreidots \f \Theta \dreidots \left (\f \Upsilon : ( \rot \dd^T  \t \dd \o ( \nabla \dd - \f S ))\right ) \o \f h }}\\&
+\intte{\phi \left ( \nabla \f d \o \f d \dreidotkom \f \Theta \dreidots \left ( \rot{ \dd} -\rot{\f d}\right )^T \nabla \left (\rot \dd \t \dd \right ) \o \f d \right )}\,.
\end{split}\label{calcu2}
\intertext{Simmilar, we get for the terms in line~\eqref{intdd2} subtracted from the second term in line~\eqref{neu2} that}
\begin{split}
&
\hspace{-2em}\intte{\phi \ll{\nu_t , \f S \o \f h \dreidots \f \Theta \dreidots \f S \o\rot{\dd}^T \rot{\dd}  \t \dd }} -\intte{\phi \ll{\nu_\tau , \f S \o \f h \dreidots \f \Theta \dreidots \f S \o \rot{\f h}^T \rot \dd \t \dd }}\\
={}& \intte{\phi \ll{\nu_\tau , \f S \o \f h \dreidots \f \Theta \dreidots \f S \o \left ( \rot\dd- \rot{\f h}\right )^T \rot \dd \t \dd }}\,.
\end{split}\label{calcu3}
\end{align}
We recognise that the first term on the right hand side of~\eqref{calcu1} can be estimated by Lemma~\ref{lem:timederi},
The last term of equation~\eqref{calcu1} can, via an integration-by-parts, be expressed as
\begin{subequations}\label{calcu4}
\begin{align}
&\hspace{-2em}\intte{\phi\left (  \rot \dd \t \dd , \left ( \rot{\f d}-\rot{\dd}\right ) \left (- \di \left ( \dd \cdot \f \Theta \dreidots \nabla \dd \o \dd \right )+ \nabla \dd : \f \Theta \dreidots \nabla \dd \o \dd \right ) \right )}\notag
\\
={}& 
\intte{\phi \left ( \nabla \dd \o \dd \dreidotkom \f \Theta \dreidots  \left (\f \Upsilon : \left ( \rot{\dd}^T \t \dd \o (\nabla \f d-\nabla \dd)   \right ) \o \dd+ \left (\rot{\f d } - \rot \dd \right )^T \nabla (\rot\dd \t \dd) \o \dd   \right )   \right )\label{calcu41}
}
\\
&+ \intte{\phi \left ( \nabla \dd \o \dd \dreidotkom \f \Theta \dreidots \nabla \dd \o  \left ( \rot{\f d} - \rot \dd \right )^T \rot \dd \t \dd  \right )}\,.\label{calcu42}
\end{align}
\end{subequations}
First we consider the difference of the right-hand side of~\eqref{calcu2} and the terms of line~\eqref{calcu41}:
\begin{align*}
& \hspace{-2em}\intte{\phi \ll{\nu_\tau , \f S \o \f h \dreidots \f \Theta \dreidots \f \Upsilon : ( \rot \dd^T  \t \dd \o ( \nabla \dd - \f S )) \o \f h }}\\
&\hspace{-2em}+\intte{\phi \left ( \nabla \f d \o \f d \dreidotkom \f \Theta \dreidots \left ( \rot{ \dd} -\rot{\f d}\right )^T \nabla \left (\rot \dd \t \dd \right ) \o \f d \right )}\\
&\hspace{-2em}+ \intte{\phi \left ( \nabla \dd \o \dd \dreidotkom \f \Theta \dreidots  \left (\left (\f \Upsilon : \left ( \rot{\dd}^T \t \dd \o (\f S-\nabla \dd)   \right )\right ) \o \dd+ \left (\rot{\f d - \dd} \right )^T \nabla (\rot\dd \t \dd) \o \dd   \right )   \right )}\\
={}&\intte{\phi \ll{\nu_\tau , \left (\f S \o \f h - \nabla \dd \o \dd \right )\dreidots \f \Theta \dreidots \left ( \f \Upsilon : ( \rot \dd^T  \t \dd \o ( \nabla \dd - \f S ))\right ) \o (\f h- \dd )  }}\\&
+ \intte{\phi \ll{\nu_\tau , \left (\f S \o \f h - \nabla \dd \o \dd \right )\dreidots \f \Theta \dreidots \left (\f \Upsilon : ( \rot \dd^T  \t \dd \o ( \nabla \dd - \f S ))\right ) \o  \dd   }}\\
&+ \intte{\phi \left ( \nabla \dd \o \dd \dreidotkom \f \Theta \dreidots  \left (\f \Upsilon : \left ( \rot{\dd}^T \t \dd \o (\f S-\nabla \dd)   \right )\right ) \o ( \dd- \f d)    \right )}\\
& +\intte{\phi \left ( \left (\nabla \f d \o \f d - \nabla \dd \o \dd\right ) \dreidotkom \f \Theta \dreidots \left ( \rot{ \dd} -\rot{\f d}\right )^T \nabla \left (\rot \dd \t \dd \right ) \o (\f d- \dd) \right )}\\
& +\intte{\phi \left ( \left (\nabla \f d \o \f d - \nabla \dd \o \dd\right ) \dreidotkom \f \Theta \dreidots \left ( \rot{ \dd} -\rot{\f d}\right )^T \nabla \left (\rot \dd \t \dd \right ) \o \dd \right )}\\
&+ \intte{\phi \left ( \nabla \dd \o \dd \dreidotkom \f \Theta \dreidots  \left (\rot{\f d } - \rot \dd \right )^T \nabla (\rot\dd \t \dd) \o (\dd- \f d)    \right )}\,.
\end{align*}
This can now be estimated by 
\begin{align*}
& \hspace{-2em}\intte{\phi \ll{\nu_\tau , \f S \o \f h \dreidots \f \Theta \dreidots \f \Upsilon : ( \rot \dd^T  \t \dd \o ( \nabla \dd - \f S )) \o \f h }}\\
&\hspace{-2em}+\intte{\phi \left ( \nabla \f d \o \f d \dreidotkom \f \Theta \dreidots \left ( \rot{ \dd} -\rot{\f d}\right )^T \nabla \left (\rot \dd \t \dd \right ) \o \f d \right )}\\
&\hspace{-2em}+ \intte{\phi \left ( \nabla \dd \o \dd \dreidotkom \f \Theta \dreidots  \left (\f \Upsilon : \left ( \rot{\dd}^T \t \dd \o (\f S-\nabla \dd)   \right ) \o \dd+ \left (\rot{\f d } - \rot \dd \right )^T \nabla (\rot\dd \t \dd) \o \dd   \right )   \right )}\\
\leq{}& \| \dd\|_{L^\infty( \f L^\infty)}   \intte{\phi \| \t \dd\|_{ \f L^\infty} \ll{\nu_t , \left | \f \Theta \dreidots \left ( \f S \o \f h - \nabla \dd \o \dd  \right )\right |^2 + | \f S - \nabla \dd |^2 | \f h - \dd |^2 }}\\
&+ \| \dd\|_{L^\infty( \f L^\infty)}^2  \intte{\phi \| \t \dd\|_{ \f L^\infty}\ll{\nu_t , \left | \f \Theta \dreidots \left ( \f S \o \f h - \nabla \dd \o \dd  \right )\right |^2 + | \f S - \nabla \dd |^2  }}\\
&+\| \dd\|_{L^\infty( \f W^{1,3})}\| \dd\|_{L^\infty( \f L^\infty)}^2   \intte{\phi \| \t \dd\|_{ \f L^\infty}\left ( \ll{\nu_t ,  | \f S - \nabla \dd |^2  }+  \| \f d - \dd \|_{\f L^6}^2\right )}\\
&+  \| \dd\|_{L^\infty( \f W^{1,3})}  \intte{\phi\| \t \dd\|_{ \f L^\infty}\left ( \ll{\nu_t ,  \left | \f \Theta \dreidots \left ( \f S \o \f h - \nabla \dd \o \dd  \right )\right |^2  }+  \| \f d - \dd \|_{\f L^{12}}^4\right )}\\
&+ \| \dd\|_{L^\infty( \f L^\infty)} \intte{\phi\| \t \dd\|_{ \f W^{1,3}}\left ( \ll{\nu_t ,  \left | \f \Theta \dreidots \left ( \f S \o \f h - \nabla \dd \o \dd  \right )\right |^2  }+  \| \f d - \dd \|_{\f L^{12}}^4\right )}\\
&+  \| \dd\|_{L^\infty( \f W^{1,3})}\|  \dd\|_{L^\infty( \f L^\infty)}   \intte{\phi\| \t \dd\|_{ \f L^\infty}\left (\phi \ll{\nu_t ,  \left | \f \Theta \dreidots \left ( \f S \o \f h - \nabla \dd \o \dd  \right )\right |^2  }+ \phi \| \f d - \dd \|_{\f L^6}^2\right )}\\
&+ \| \dd\|_{L^\infty( \f L^\infty)}^2  \intte{\phi\| \t \dd\|_{ \f W^{1,3}}\left ( \ll{\nu_t ,  \left | \f \Theta \dreidots \left ( \f S \o \f h - \nabla \dd \o \dd  \right )\right |^2  }+  \| \f d - \dd \|_{\f L^6}^2\right )}\\
&+\| \dd\|_{L^\infty( \f W^{1,3})}( \| \dd\|_{L^\infty( \f L^\infty)}+  \| \dd\|_{L^\infty( \f W^{1,3})}) \intte{ \phi \left (\| \t \dd\|_{ \f W^{1,3}} +\| \t \dd\|_{ \f L^\infty}\right ) \| \f d - \dd \|_{\f L^6}^2}\,.
\end{align*}
All the terms on the right-hand side can be bounded due to Lemma~\ref{Sobolev}.

The last line of~\eqref{calcu3} and~\eqref{calcu42} can expressed as
\begin{align*}
&\hspace{-2em}\intte{\phi \ll{\nu_\tau , \f S \o \f h \dreidots \f \Theta \dreidots \f S \o \left ( \rot\dd- \rot{\f h}\right )^T \rot \dd \t \dd }}
+ \intte{\phi \left ( \nabla \dd \o \dd \dreidotkom \f \Theta \dreidots \nabla \dd \o  \left ( \rot{\f d} - \rot \dd \right )^T \rot \dd \t \dd  \right )}\\
={}& 
\intte{ \phi \ll{\nu_\tau ,\left ( \f S \o \f h - \nabla \dd \o \dd \right )\dreidots \f \Theta \dreidots \left (\f S- \nabla \dd \right ) \o \left ( \rot\dd- \rot{\f h}\right )^T \rot \dd \t \dd } }\\
& +
\intte{ \phi \ll{\nu_\tau ,\left ( \f S \o \f h - \nabla \dd \o \dd \right )\dreidots \f \Theta \dreidots  \nabla \dd \o \left ( \rot\dd- \rot{\f h}\right )^T \rot \dd \t \dd } }\\
&+ \intte{\phi \left ( \nabla \dd \o \dd \dreidotkom \f \Theta \dreidots (\nabla \dd- \f S)  \o  \left ( \rot{\f d} - \rot \dd \right )^T \rot \dd \t \dd  \right )}\,.
\end{align*}
Estimating the right hand side gives
\begin{align*}
&\hspace{-2em}\intte{\phi \ll{\nu_\tau , \f S \o \f h \dreidots \f \Theta \dreidots \f S \o \left ( \rot\dd- \rot{\f h}\right )^T \rot \dd \t \dd }}
+ \intte{\phi \left ( \nabla \dd \o \dd \dreidotkom \f \Theta \dreidots \nabla \dd \o  \left ( \rot{\f d} - \rot \dd \right )^T \rot \dd \t \dd  \right )}\\
\leq {}& 
\| \dd\|_{L^\infty( \f L^\infty)}   \intte{\phi  \| \t \dd\|_{ \f L^\infty}\ll{\nu_t , \left | \f \Theta \dreidots \left ( \f S \o \f h - \nabla \dd \o \dd  \right )\right |^2 + | \f S - \nabla \dd |^2 | \f h - \dd |^2 }}\\
&+  \| \dd\|_{L^\infty( \f W^{1,3})} \| \dd \|_{L^\infty(\f L^\infty)}  \intte{\phi \| \t \dd\|_{ \f L^\infty}\left ( \ll{\nu_t ,  \left | \f \Theta \dreidots \left ( \f S \o \f h - \nabla \dd \o \dd  \right )\right |^2  }+  \| \f d - \dd \|_{\f L^6}^2\right )}\\
&+\| \dd\|_{L^\infty( \f W^{1,3})}\| \dd\|_{L^\infty( \f L^\infty)}^2    \intte{\phi\| \t \dd\|_{ \f L^\infty}\left ( \ll{\nu_t ,  | \f S - \nabla \dd |^2  }+  \| \f d - \dd \|_{\f L^6}^2\right )}\,,
\end{align*}
which can be bounded due to Lemma~\ref{Sobolev} by the relative Energy.

Putting all the estimates back into~\eqref{abs} and using Lemma~\ref{prop:lemvar} gives
the asserted integration-by-parts formula~\eqref{intd}.
%
%
%

\end{proof}

\begin{corollary}\label{cor:cor1}
Let $(\f v, \f d)$ be a measure-valued solution and $( \vv , \dd )$ be a strong solution. Then there exists for every $\delta>0$ a possibly large constant $C_\delta$ such that
\begin{align}
\begin{split}
- ( \f v(t) , \vv (t))  & - \left ( \nabla \f d (t) ; \f \Lambda : \nabla \dd(t)\right ) - \left (  \nabla \f d(t)\o \f d(t) \dreidotkom \f \Theta \dreidots \nabla \dd(t) \o \dd(t)  \right ) 
\\
\leq {}& -  \int_0^t\left  [( \f v  , \t \vv  ) + ( \t \f v  , \vv) \right ]\de s - ( \f v(0), \vv(0))-  \int_0^t   \left [ ( \f d \times \t \f d  ,\dd \times  \tq) + ( \f d \times \f q ,\dd \times  \t \dd)  \right ]    \de s \\&- 
\left  ( \nabla \f d(0); \f \Lambda : \nabla \dd(0)\right ) - \left ( \nabla \f d(0) \o \f d(0) \dreidotkom \f \Theta \dreidots \nabla \dd(0) \o \dd(0)\right )\\
 & + \frac{1}{2} \mathcal{E}(t) + \delta \int_0^t \mathcal{W}(s) \de s + C_\delta \int_0^t \mathcal{E}(s) \de s \\
&
+\left ((\nabla \f d(0)-\nabla \dd (0))\o (\f d(0)-  \dd(0))\right ) \dreidots \f \Theta \dreidots (\nabla \dd (0)\o \dd(0))
+ c\| \nabla \dd \o \dd \|_{L^\infty( \f L^\infty)}^2 \left \|   \f d(0)- \dd(0)    \right \|_{\Le}^2 
\end{split}\label{auscor}
\end{align}
holds for a.e.~$t\in (0,T)$.  
\end{corollary}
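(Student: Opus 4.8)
The plan is to assemble \eqref{auscor} from the integration-by-parts formulae of Proposition~\ref{prop:int}, all evaluated at $s=0$. I would first note that, although \eqref{intd2} and \eqref{intd} are stated as upper bounds, their derivations estimate the error contributions in absolute value by $\delta\mathcal{W}+c\mathcal{E}$; combined with Lemma~\ref{prop:lemvar} applied to the negative of the integrand, this furnishes the matching \emph{lower} bounds for the two director cross terms at time $t$. Since all three cross terms enter \eqref{auscor} with a minus sign, it is precisely these reversed bounds that are needed. The velocity contribution $-(\f v(t),\vv(t))$ is immediate from the equality \eqref{intv}, producing $-(\f v(0),\vv(0))-\int_0^t[(\f v,\t\vv)+(\t\f v,\vv)]\,\de s$ verbatim.

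For the two director cross terms I would use the reversed forms of \eqref{intd2} and \eqref{intd}; the crux is to show that their negated main (non-error) integrands recombine into $-(\f d\times\t\f d,\dd\times\tq)-(\f d\times\f q,\dd\times\t\dd)$. The first pairing is the cleaner one: the term $-(\rot{\dd}\Lap\dd,\rot{\f d}\t\f d)$ of \eqref{intd2} and line \eqref{intdd1} of \eqref{intd} combine, by the defining formula \eqref{qdef} for $\tq$ and the identity $\rot{\f a}\f b=\f a\times\f b$, into $(\rot{\f d}\t\f d,\rot{\dd}\tq)=(\f d\times\t\f d,\dd\times\tq)$. For the second pairing, I would read $(\f d\times\f q,\dd\times\t\dd)$ through its defining representation \eqref{eq:q} tested with $\f\psi=\rot{\dd}\t\dd=\dd\times\t\dd$, and match it term by term against the remaining main integrands: the two generalized Young measure terms and the pairing involving $\rot{\f d}^T\nabla(\rot{\dd}\t\dd)$. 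Using $\rot{\f d}\f M:\f N=\f M:\rot{\f d}^T\f N$, the symmetry of $\f\Lambda$, and the explicit forms \eqref{FSFh} of $F_{\f S},F_{\f h}$, the $\f\Lambda$-contribution is identified directly, while the $\f\Theta$-contributions together with the contractions against the Levi--Civita tensor $\f\Upsilon$ reproduce the measure terms of \eqref{eq:q}.

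It remains to dispose of the correction terms carried by \eqref{intd}. Its correction evaluated at $s=0$, the term $((\nabla\f d(0)-\nabla\dd(0))\o(\f d(0)-\dd(0)))\dreidots\f\Theta\dreidots(\nabla\dd(0)\o\dd(0))$, passes unchanged to the right-hand side of \eqref{auscor}. The correction at time $t$, namely $-((\nabla\f d(t)-\nabla\dd(t))\o(\f d(t)-\dd(t)))\dreidots\f\Theta\dreidots(\nabla\dd(t)\o\dd(t))$, I would bound by Young's inequality as $\epsilon\|\nabla\f d(t)-\nabla\dd(t)\|_{\Le}^2+C_\epsilon\|\nabla\dd\o\dd\|_{L^\infty(\f L^\infty)}^2\|\f d(t)-\dd(t)\|_{\Le}^2$. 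The first summand is absorbed, upon choosing $\epsilon$, into $\tfrac12\mathcal{E}(t)$ after passing to the limit in the inequality \eqref{abnon} underlying Lemma~\ref{Sobolev} (valid since $\f d(t)-\dd(t)\in\Hb$); the second summand is rewritten through \eqref{intd1} as $c\|\nabla\dd\o\dd\|_{L^\infty(\f L^\infty)}^2\|\f d(0)-\dd(0)\|_{\Le}^2$ plus further contributions to $\delta\int_0^t\mathcal{W}+C_\delta\int_0^t\mathcal{E}$. This reproduces exactly the factor $\tfrac12\mathcal{E}(t)$ and the boundary term of \eqref{auscor}.

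The main obstacle is the algebraic reconciliation of the second paragraph: verifying, at the level of the generalized Young measure, that the main integrands of \eqref{intd2}--\eqref{intd} coincide with the representation \eqref{eq:q} of $(\f d\times\f q,\dd\times\t\dd)$. This rests on the explicit tensors \eqref{Lambda}, \eqref{ThetaOF} and the contraction identities deferred to Appendix~\ref{sec:app}, together with the elementary relations $\di\rot{\f a}=-\curl\f a$ and $2\rott{\sk a}=\curl\f a$; one must also justify, by density, that $\dd\times\t\dd$ is an admissible test direction in \eqref{eq:q}. Once this identification is in place, summing the three reversed formulae, collecting all error contributions into $\delta\int_0^t\mathcal{W}+C_\delta\int_0^t\mathcal{E}$, and invoking Lemma~\ref{prop:lemvar} yields \eqref{auscor}.
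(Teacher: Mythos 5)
Your plan follows the paper's own proof in all essentials: sum the integration-by-parts formulae \eqref{intv}, \eqref{intd2} and \eqref{intd}; identify the main integrands with $-\left(\f d\times\t\f d,\dd\times\tq\right)-\left(\f d\times\f q,\dd\times\t\dd\right)$ by inserting \eqref{qdef} for the strong solution and \eqref{eq:q} (tested, by density, with $\f\psi=\rot{\dd}\t\dd$, which has zero trace since the boundary data are time-independent) for the measure-valued solution; and dispose of the $t$-dependent correction \eqref{intdd5} by Young's inequality through the representation \eqref{identify}, absorbing the gradient part into $\tfrac12\mathcal{E}(t)$ via the strong ellipticity of $\f\Lambda$ and treating the zero-order part with \eqref{intd1} --- this is precisely the paper's estimate \eqref{Youngint}. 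Your opening observation about the inequality direction is correct and in fact more careful than the paper: since \eqref{auscor} bounds the \emph{negatives} of the cross terms, one needs the reversed forms of \eqref{intd2} and \eqref{intd}, and these do hold because the remainder estimates in the proof of Proposition~\ref{prop:int} are absolute-value (Cauchy--Schwarz/Young) bounds; the paper uses this silently.

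There is, however, one genuine gap: you invoke Proposition~\ref{prop:int} ``all evaluated at $s=0$'', but the proposition, resting on Lemma~\ref{prop:lemvar}, yields the formulae only for a.e.~$s\in(0,T)$, whereas \eqref{auscor} contains the values at the initial time. Closing this is a nontrivial part of the paper's proof: one uses $\f v\in L^\infty(0,T;\Ha)\cap W^{1,2}(0,T;(\f W^{1,3}_{0,\sigma})^*)\hookrightarrow \C_w([0,T];\Ha)$ together with $\vv\in\C([0,T];\Ha)$ to pass to the limit $s\ra 0$ in \eqref{intv}; for the director terms one needs $\f d\in\C_w([0,T];\He)$ and, via compact Sobolev embeddings, $\f d\in\C([0,T];\f L^5)$ and $\f d\in\C([0,T];\f W^{1,10/3})$, and then a splitting of the difference of the $\f\Theta$ cross terms at times $0$ and $s$ into one contribution that converges by \emph{weak} convergence of $\nabla\f d(s)$ against fixed strong-solution quantities and several contributions that converge strongly. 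Without this attainment argument the terms $(\f v(0),\vv(0))$, $\left(\nabla\f d(0);\f\Lambda:\nabla\dd(0)\right)$ and the $\f\Theta$-terms at time $0$ in \eqref{auscor} are not justified. The step does not fail --- the regularity classes of Definitions~\ref{def:meas} and~\ref{def:strong} suffice --- but it is a necessary ingredient that your sketch omits entirely, and you should supply it before the rest of your argument, which otherwise matches the paper, goes through.
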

\begin{proof}

First, we observe that the integration-by-parts formulae of Lemma~\ref{prop:int} holds true for $s=0$.  
Observe that $\f v \in  L^\infty(0,T; \Ha) \cap W^{1,2}(0,T; (\f W^{1,3}_{0,\sigma})^*) \hookrightarrow \C_w([0,T]; \Ha)$ and $ \vv \in \C([0,T]; \Ha)$ such that the limit $s\ra 0$ in~\eqref{intv} is well defined. 
Since
\begin{align*}
\f d \in L^\infty (0,T; \He) \cap \AC([0,T]; \f L^{3/2})\quad\text{and}\quad \dd \in L^\infty(0,T;\f W^{2,3}) \cap \AC([0,T]; \f W^{1,3}) \,.
\end{align*}
A standard lemma (see Lions and Magenes~\cite[p. 297]{magnes} shows that $\f d \in \C_w([0,T]; \He)$. With the compact Sobolev embedding $\He\hookrightarrow^c \f L^5$ we see $ \f d \in \C( [0,T]; \f L^5)$. Similarly, we see $\f d \in \C([0,T]; \f W^{1,{10}/{3}})$.  Together, we can estimate
\begin{align*}
&\left |( \nabla \f d(0) \o \f d(0) ) \dreidots \f \Theta \dreidots ( \nabla \dd(0) \o \dd(0))-( \nabla \f d(s) \o \f d(s) ) \dreidots \f \Theta \dreidots ( \nabla \dd(s) \o \dd(s))   \right |
\\
& \leq \left | ((\nabla \f d(0)- \nabla \f d(s)) \o \f d(0) ) \dreidots \f \Theta \dreidots ( \nabla \dd(0) \o \dd(0))\right | \\
& \quad + \| \nabla \f d (s)\| _{\Le}^2 \| \f d (0)-\f d (s)\| _{\f L^5}^2 \| \nabla \dd (0)\|_{\f L^{10/3}} \| \dd (0)\|_{\f L^\infty} \\
& \quad  +  \| \nabla \f d (s)\| _{\Le}^2 \| \f d (s)\| _{\f L^6}^2 \| \nabla \dd (0)-\nabla \dd (s) \|_{\f L^{10/3}} \| \dd (0)\|_{\f L^{15/2}}\\
& \quad  +  \| \nabla \f d (s)\| _{\Le}^2 \| \f d (s)\| _{\f L^6}^2 \| \nabla \dd (s) \|_{\f L^{10/3}} \|  \dd (0)-\dd (s)\|_{\f L^{15/2}}
\,,
\end{align*}
and the limit $s\ra 0$  vanishes due to the weak convergence of the gradient of $\f d$ and the strong convergence of the other terms.
The additional terms depending on $s$ in\eqref{intd1},~\eqref{intd2} and~\eqref{intd}  can be handled similar, even somehow simpler since they are more regular.
For the term~\eqref{intdd5} depending on $t$ we observe that we can represent it due to~\eqref{identify} almost everywhere via the generalized gradient Young measure (see Definition~\eqref{def:gradmeas}) and apply Youngs inequality 
\begin{align}
\begin{split}
&\left ((\nabla \f d(t)-\nabla \dd (t))\o (\f d(t)-  \dd(t)) \dreidotkom  \f \Theta \dreidots \nabla \dd (t)\o \dd(t) \right )\\
&= \int_{\Omega} \left \langle  \nu^o _{( \f x ,t)},\left ( ( \f S - \nabla \dd (t) ) \o ( \f d (t) - \dd (t)) \right ) \dreidots \f \Theta \dreidots \left ( \nabla \dd (t) \o \dd (t) \right )  \right \rangle \de \f x  \\
&\leq  \frac{k}{4}\int_{\Omega} \left \langle  \nu^o _{( \f x ,t)},\left | \f S - \nabla \dd (t) \right |^2 \right \rangle \de \f x + c \left \|\nabla \dd\o \dd  \right \|^2_{L^\infty( \f L^\infty)} \int_{\Omega}| \f d (t) - \dd (t)|^2  \de \f x  \\
& \leq \frac{k}{4}\ll{\nu_t,  | \f S - \nabla \dd (t) |^2  }   + c \left \|\nabla \dd\o \dd  \right \|^2_{L^\infty( \f L^\infty)} \left\| \f d (t) - \dd (t) \right \|_{\Le}^2   
\end{split}\label{Youngint}
\end{align}
Remark that the defect measure $m_t$ is a positive measure, $m_t \in \M^+(\ov \Omega)$.
The constant $k$ is chosen small enough, such that 
\begin{align*}
k\ll{  \nu _{t}, | \f S - \nabla \dd (t) |^2   }  \leq \ll{\nu _{t},  ( \f S - \nabla \dd (t)) :\f \Lambda : ( \f S - \nabla \dd (t))  }  \leq 2 \mathcal{E}(t)
\end{align*} 
The second term on the right-hand side can be estimated by formula~\eqref{intd1}.
Adding the integration-by-parts formulae~\eqref{intv},~\eqref{intd1} and~\eqref{intd} for $s=0$ and estimating the term in line~\eqref{intdd5} by~\eqref{Youngint} implies the assertion~\eqref{auscor}. 
Therefore, the definitions~\eqref{qdef} and~\eqref{eq:q} of the variational derivatives are inserted for the strong solution and the measure-valued solution, respectively.

\end{proof}

\section{Proof of the main result\label{sec:main}}
The following lemma implies the main result Theorem~\ref{thm:main} and is proven in this Section.
\begin{proposition}\label{lem:main}
Let $( \f v , \f d)$ and $( \nu, m, \nu^\infty)$ be a suitable measure-valued solution for initial values $( \f v_0 , \f d_0) $ and let $(\vv, \dd)$ be a strong solution for the initial values~$(\vv_0, \dd_0) $ fulfilling the same boundary conditions.
Then there exists a possible large constant $c$ such that
\begin{align}
\frac{1}{2}\mathcal{E} (t) \leq{}& \Big ( \mathcal{E}(0)+\left ((\nabla \f d(0)-\nabla \dd (0))\o (\f d(0)-  \dd(0)) \dreidotkom \f \Theta \dreidots \nabla \dd (0)\o \dd(0)\right )
\\
&+ c\left \|   \f d(0)- \dd(0)    \right \|_{\Le}^2 \Big) e ^{\int_0^t\mathcal{K}(s)\de s } \, ,
\end{align} 
where $\mathcal{K}$ is bounded in $L^1(0,T)$.
\end{proposition}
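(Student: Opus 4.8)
The plan is to expand the relative energy~\eqref{relEn} into three separate groups and then substitute a tailored energy balance for each. Expanding the three quadratic forms, using the identity $\int_{\R^{3\times3}}\f S\,\nu^o_{(\f x,t)}(\de\f S)=\nabla\f d$ from~\eqref{identify}, and observing that the recession functions of the \emph{mixed} terms $\f S:\f\Lambda:\nabla\dd$ and $(\f S\o\f h)\dreidots\f\Theta\dreidots(\nabla\dd\o\dd)$ carry a factor $\sqrt{1-|\tilde{\f S}|^2}$ and therefore vanish on the concentration support $\ov B_3\times\Se^{3^2-1}$, I would first establish the clean decomposition
\begin{align*}
\mathcal{E}(t)={}&\Big[\tfrac12\|\f v(t)\|_{\Le}^2+\ll{\nu_t,F}+\tfrac12\ll{\mu_t,1}\Big]+\Big[\tfrac12\|\vv(t)\|_{\Le}^2+\F(\dd(t))\Big]\\
&-(\f v(t),\vv(t))-\big(\nabla\f d(t);\f\Lambda:\nabla\dd(t)\big)-\big(\nabla\f d(t)\o\f d(t)\dreidotkom\f\Theta\dreidots\nabla\dd(t)\o\dd(t)\big).
\end{align*}
Here the first bracket is the energy of the measure-valued solution, the second that of the strong solution, and the last line is exactly the left-hand side of Corollary~\ref{cor:cor1}.

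Next I would insert one estimate per group: the energy inequality~\eqref{energyin} bounds the first bracket; the energy equality for the strong solution established in Section~\ref{sec:pre} rewrites the second bracket; and Corollary~\ref{cor:cor1} controls the cross terms. The decisive structural point is that the $\tfrac12\mathcal{E}(t)$ on the right-hand side of~\eqref{auscor} is absorbed into the left, which is precisely why the assertion is stated for $\tfrac12\mathcal{E}(t)$. After this absorption the external forcing cancels completely: the terms $\int_0^t\langle\f g,\f v\rangle$ and $\int_0^t\langle\f g,\vv\rangle$ produced by the two energy balances are annihilated by the forcing hidden in $-\int_0^t[(\f v,\t\vv)+(\t\f v,\vv)]$, once the weak momentum identity~\eqref{eq:velo} is used for $\t\f v$ (tested with $\vv$) and the strong equation~\eqref{nav} for $\t\vv$ (tested with $\f v$).

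The heart of the argument is the algebraic assembly of the remaining quadratic dissipation. The non-negative dissipations of the two energy balances supply the ``squared'' contributions, while the mixed $-2(\cdot,\cdot)$ terms needed to complete the squares in the relative dissipation~\eqref{relW} arise from the Leslie-stress terms of the tested momentum equations together with the director cross terms $-\int_0^t[(\f d\times\t\f d,\dd\times\tq)+(\f d\times\f q,\dd\times\t\dd)]$ coming from Corollary~\ref{cor:cor1}; here I would reuse the manipulation of Corollary~\ref{prop:e} (inserting the director equation twice) exactly as in the proof of the strong energy equality, so that all four terms of $\mathcal{W}$ are produced with the correct signs, the coercivity conditions~\eqref{con} ensuring $\mathcal{W}\ge0$. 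Every genuinely leftover term---the convective remainder $((\f v-\vv)\cdot\nabla)\vv\cdot(\f v-\vv)$, the Ericksen-stress and $\f q$ remainders, and the couplings carrying $(\mu_2+\mu_3)-\lambda$---has already been pre-digested in the integration-by-parts formulae~\eqref{intd1}--\eqref{intd} and in Lemma~\ref{lem:timederi}, and is thus bounded by $\delta\mathcal{W}(s)+\mathcal{K}(s)\mathcal{E}(s)$ with the help of the generalized Sobolev estimates of Lemma~\ref{Sobolev} and of Lemma~\ref{lem:vab}.

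Collecting everything yields
\begin{align*}
\tfrac12\mathcal{E}(t)+\int_0^t\mathcal{W}(s)\de s\le c_0+\delta\int_0^t\mathcal{W}(s)\de s+\int_0^t\mathcal{K}(s)\mathcal{E}(s)\de s,
\end{align*}
where $c_0$ equals $\mathcal{E}(0)$ plus the two initial correction terms of the assertion, as one checks by recombining the initial data of the three balances (at $t=0$ the oscillation measure is $\delta_{\nabla\f d_0}$ and $\mu_0,m_0$ vanish, so the mixed terms are exact). Choosing $\delta<1$, discarding the remaining non-negative $\int_0^t\mathcal{W}$, and applying Gronwall's lemma with $\mathcal{K}\in L^1(0,T)$ gives the claimed exponential bound. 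The main obstacle is the bookkeeping of the quadratic assembly in the third paragraph: one must verify that the Leslie-stress and director cross terms combine into \emph{precisely} the positive form $\mathcal{W}$ while every residue is of strictly lower order, hence absorbable by $\delta\mathcal{W}+\mathcal{K}\mathcal{E}$; the Parodi relation~\eqref{parodi} is what prevents the $(\mu_2+\mu_3)-\lambda$ couplings from breaking this structure.
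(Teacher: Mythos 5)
Your architecture coincides with the paper's proof: the same expansion of $\mathcal{E}(t)$ into the two energies plus the mixed terms (your observation that the recession functions of the mixed terms carry a factor $\sqrt{1-|\tilde{\f S}|^2}$ and hence vanish on the concentration support, so that \eqref{identify} identifies them with $(\nabla\f d;\f\Lambda:\nabla\dd)$ etc., is exactly what legitimizes this), the same three inputs (energy inequality \eqref{energyin}, the strong energy equality, Corollary~\ref{cor:cor1}), the same cancellation of the forcing after testing \eqref{eq:velo} with $\vv$ and \eqref{nav} with $\f v$, the same reuse of Corollary~\ref{prop:e} to generate the four squares of \eqref{relW}, the same absorption of $\tfrac12\mathcal{E}(t)$, and Gronwall at the end.

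There is, however, one step that fails as written. You claim that every leftover coupling, including the one carrying $(\mu_2+\mu_3)-\lambda$, is bounded by $\delta\mathcal{W}+\mathcal{K}\mathcal{E}$. The residual term in question is $((\mu_2+\mu_3)-\lambda)\int_0^t\left ( \dd\times\syv\dd-\f d\times\sy{v}\f d,\ \dd\times\tq-\f d\times\f q\right )\de s$, and \emph{both} factors are square roots of full-order dissipation terms in \eqref{relW}; there is no lower-order factor to push into $\mathcal{E}$, so Young's inequality yields a bound of the form $\zeta\int_0^t\mathcal{W}\de s$ whose prefactor $\zeta$ is dictated by the size of $(\mu_2+\mu_3)-\lambda$ relative to $\mu_5+\mu_6-\lambda(\mu_2+\mu_3)$ and cannot be made small by tuning $\delta$. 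This is precisely where the last inequality in \eqref{con} is consumed: the paper fixes $\zeta\in(0,1)$ with $(\mu_2+\mu_3)-\lambda\le \zeta^2\,4(\mu_5+\mu_6-\lambda(\mu_2+\mu_3))$, absorbs the coupling with factor $\zeta$ (after converting $\|\f d\times\sy{v}\f d-\dd\times\syv\dd\|_{\Le}^2$ into $\|\sy{v}\f d-\syv\dd\|_{\Le}^2$ plus a $\mathcal{K}\mathcal{E}$ remainder), and only then chooses $\delta\le 1-\zeta$. Your attribution of this rescue to Parodi's relation is a misdiagnosis: \eqref{parodi} enters only through the \emph{existence} of suitable solutions (Remark~\ref{rem:ex}); the uniqueness argument itself runs without Parodi, on the strength of \eqref{con}. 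A secondary inaccuracy: the leftover terms (the paper's $I_1$--$I_{10}$) are not ``pre-digested'' by the integration-by-parts formulae of Section~\ref{sec:int} --- those enter only through Corollary~\ref{cor:cor1}; in particular the Ericksen-stress block ($I_9$) requires fresh solenoidality identities and Young-measure limit computations that form the bulk of the paper's proof, although your claimed outcome ($\delta\mathcal{W}+\mathcal{K}\mathcal{E}$ bounds via Lemma~\ref{Sobolev} and Lemma~\ref{lem:vab}) is correct for those terms.
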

\begin{rem}
The result of Proposition~\ref{lem:main} implies a continuous dependence of the relative energy~\eqref{relEn} on the difference of the initial values. 
\end{rem}
\begin{proof}
Considering the relative energy, we observe
\begin{align*}
\mathcal{E}(t) ={}& \ll{\mu_t , 1 }+\ll{\nu_t , F } + \frac{1}{2}\| \f v(t)\|_{\Le}^2  + \F( \dd(t), \nabla \dd(t)) + \frac{1}{2}\| \vv(t)\|_{\Le}^2 \\
&- ( \f v(t) , \vv (t))  - \left ( \nabla \f d (t) ;\, \f \Lambda : \nabla \dd(t)\right ) - \left (  \nabla \f d(t)\o \f d(t) \dreidotkom \f \Theta \dreidots \nabla \dd(t) \o \dd(t)  \right )  \,.
\end{align*}
We insert the energy inequality for the measure-valued solution and the energy equality for the strong solution. Adding the integral over the relative dissipation gives
\begin{align*}
\mathcal{E}(t) + \int_0^t\mathcal{W}(s) \de s \leq{}& \F( \f d_0) + \frac{1}{2}\| \f v_0\|_{\Le}^2 + \F( \dd_0) + \frac{1}{2}\| \vv_0\|_{\Le}^2+ \int_0^t \langle \f g , \f v + \vv \rangle \de s \\
&- 2 (\mu_1 + \lambda( \mu_2+ \mu_3 ) )\int_0^t ( \f d \cdot \sy v \f d , \dd\cdot \syv  \dd )   \de s    - 2 \mu_4 \int_0^t ( \sy v ; \syv ) \de s \\
& - 2( \mu_5 + \mu_6 - \lambda( \mu_2+ \mu_3 )) \int _0^t ( \sy v \f d , \syv \dd ) \de s - 2  \int_0^t (\f d \times \f q ,\dd\times  \tq ) \de s\\
& + (  ( \mu_2+ \mu_3 ) - \lambda ) \int_0^t \left [ ( \f d \times \f q ,\f d \times \sy v \f d ) + ( \dd \times \tq , \dd \times \syv \dd )\right ] \de s \\
&- ( \f v(t) , \vv (t))   - \left ( \nabla \f d (t) ; \f \Lambda : \nabla \dd(t)\right ) - \left (  \nabla \f d(t)\o \f d(t) \dreidotkom \f \Theta \dreidots \nabla \dd(t) \o \dd(t)  \right )  \,.
\end{align*}
Using Corollary~\ref{cor:cor1} gives
\begin{align}
\begin{split}
\mathcal{E}(t) + \int_0^t\mathcal{W}(s) \de s \leq{}& \F( \f d_0) + \frac{1}{2}\| \f v_0\|_{\Le}^2 + \F( \dd_0) + \frac{1}{2}\| \vv_0\|_{\Le}^2 \\
&- 2 (\mu_1 +\lambda( \mu_2+ \mu_3 ))\int_0^t ( \f d \cdot \sy v \f d , \dd\cdot \syv  \dd )   \de s   \\
& - 2( \mu_5 + \mu_6 - \lambda ( \mu_2+ \mu_3 )) \int _0^t ( \sy v \f d , \syv \dd ) \de s \\
& - 2 \mu_4 \int_0^t ( \sy v ; \syv ) \de s - 2  \int_0^t ( \f d\times \f q , \dd \times \tq ) \de s  + \int_0^t \langle \f g , \f v + \vv \rangle \de s \\
& + (  ( \mu_2+ \mu_3 ) - \lambda ) \int_0^t \left [ ( \f d \times \f q , \f d \times \sy v \f d ) + ( \dd \times \tq ,\dd \times  \syv \dd )\right ] \de s- ( \f v(0), \vv(0))  \\
&-  \int_0^t\left [( \f v  , \t \vv  ) + ( \t \f v  , \vv) \right ]\de s  - 
\left  ( \nabla \f d(0); \f \Lambda : \nabla \dd(0)\right )- \left ( \nabla \f d(0) \o \f d(0) \dreidotkom \f \Theta \dreidots \nabla \dd(0) \o \dd(0)\right )
\\& 
-  \int_0^t   \left [ ( \f d \times \t \f d  ,\dd \times \tq) + ( \f d \times \f q ,\dd \times  \t \dd)  \right ]    \de s
  + \frac{1}{2} \mathcal{E}(t) + \delta \int_0^t \mathcal{W}(s) \de s + C_\delta \int_0^t \mathcal{E}(s) \de s \\
&
+\left ((\nabla \f d(0)-\nabla \dd (0))\o (\f d(0)-  \dd(0))\right ) \dreidots \f \Theta \dreidots (\nabla \dd (0)\o \dd(0))
+ c\| \nabla \dd \o \dd \|_{L^\infty( \f L^\infty)}^2 \left \|   \f d(0)- \dd(0)    \right \|_{\Le}^2 
\end{split}
\label{vormeins}
\end{align}
Regarding the terms incorporating the initial values, we see
\begin{align*}
\F( \f d_0) + \frac{1}{2}\| \f v_0\|_{\Le}^2 + \F( \dd_0) + \frac{1}{2}\| \vv_0\|_{\Le}^2- ( \f v(0), \vv(0)) - 
\left  ( \nabla \f d(0); \,\f \Lambda : \nabla \dd(0)\right )&\\ - \left ( \nabla \f d(0) \o \f d(0) \dreidotkom \f \Theta \dreidots \nabla \dd(0) \o \dd(0)\right ) &= \mathcal{E}(0)\, .
\end{align*}
In the next step, we use that $(\f v, \f d)$ and $(\vv, \dd)$  are measure-valued and strong solutions, respectively.
For the measure-valued solution, we consider equation~\eqref{eq:velo} tested with $\vv$ and the equation~\eqref{eq:mdir} tested with $\dd \times \tq$ and added up:
\begin{align*}
-{}&\intt{( \t \f v , \vv) + ( \f d \times \t \f d , \dd \times \tq) + \mu_4 \left ( \sy v ; \syv \right ) + \left ( \f d \times \f q , \dd \times \tq\right )}\\
={}& \intt{\left ( ( \f v \cdot \nabla ) \f v , \vv  \right )+ \mu_1 \left ( \f d \cdot \sy v , \f d \cdot\syv \f d\right ) + ( \mu_5+\mu_6) \left ( \sy v \f d , \syv \f d \right ) }\\
& + \intt{(\mu_2+\mu_3) \left ( \syv \f d , \f e\right ) + \lambda \left ( \sy v \f d , \skv \f d  \right ) + \left (\f e , \skv \f d \right )}\\
& -\intt{\ll{\nu_t , \f S^T F_{\f S}( \f h ,\f S) : \nabla \vv }+ 2 \ll{\mu_t, \f \Gamma \dreidots (\f \Gamma\cdot \nabla \vv) }}\\
&+ \intt{\left ( \rot{\f d }( \f v \cdot \nabla ) \f d , \rot \dd \tq \right ) - \left ( \rot{\f d} \sk v \f d , \rot{\dd} \tq \right )+ \lambda  \left ( \rot{\f d} \sy v \f d , \rot\dd \tq\right )}\,.
\end{align*}
Using Corollary~\ref{prop:e} and equation~\eqref{eq:mdir} to replace the term $\f e$ shows
\begin{align}
\begin{split}
-{}&\intt{( \t \f v , \vv) + ( \f d \times \t \f d , \dd \times \tq) + \mu_4 \left ( \sy v ; \syv \right ) + \left ( \f d \times \f q , \dd \times \tq\right )}\\
={}& \intt{\left ( ( \f v \cdot \nabla ) \f v , \vv  \right )+ \mu_1 \left ( \f d \cdot \sy v , \f d \cdot\syv \f d\right ) + ( \mu_5+\mu_6) \left ( \sy v \f d , \syv \f d \right ) }\\
& - \intt{(\mu_2+\mu_3) \left (\f d \times  \syv \f d ,\f d \times  \f q\right )+ \lambda(\mu_2+\mu_3) \left (\f d \times  \syv \f d ,\f d \times  \sy v\right ) + \left (\f d \times \f q ,\f d \times  \skv \f d \right )}\\
& -\intt{\ll{\nu_t , \f S^T F_{\f S}( \f h ,\f S) : \nabla \vv }+ 2 \ll{\mu_t, \f \Gamma \dreidots( \f \Gamma\cdot \nabla \vv) }}\\
&+ \intt{\left ( \rot{\f d }( \f v \cdot \nabla ) \f d , \rot \dd \tq \right ) - \left ( \rot{\f d} \sk v \f d , \rot{\dd} \tq \right )+ \lambda  \left ( \rot{\f d} \sy v \f d , \rot\dd \tq\right )}\,.
\end{split}\label{measeq}
\end{align}
Remark that $\f d\cdot \skv \f d=0$. 
The strong solution is treated similar, by testing equation~\eqref{nav} with $\f v$ and equation~\eqref{dir} with $\f d \times \f q$ and adding them up:
\begin{align*}
-{}& \intt{\left ( \t \vv ,\f v \right ) + \left ( \dd \times \t \dd , \f d \times \f q\right ) + \mu_4 \left ( \syv, \sy v \right ) + \left ( \dd \times \tq , \f d \times \f q \right ) }\\
={}& \intt{\left ( ( \vv \cdot \nabla ) \vv , \f v  \right )+ \mu_1 \left (\dd \cdot \syv \dd , \dd \cdot \sy v \dd \right ) + ( \mu_5+ \mu_6) \left ( \syv \dd , \sy v \dd \right ) }\\
&+\intt{( \mu_2+ \mu_3 ) \left (\te , \sy v \dd \right )+ \lambda \left (\syv \dd , \sk v \dd \right )+ \left (\te , \sk v \dd\right )}\\
&- \intt{ \left (\rot \dd\left ( ( \vv \cdot \nabla) \dd -\skv \dd +\lambda \syv \dd\right ), \rot \dd  \f q \right )+ \left (\nabla \dd ^T F_{\f S}( \dd , \nabla \dd) ; \nabla \f v \right )}\,.
\end{align*}
Using Corollary~\ref{prop:e} and equation~\eqref{dir} to replace the term $\te$ shows
\begin{align}
\begin{split}
-{}& \intt{\left ( \t \vv ,\f v \right ) + \left ( \dd \times \t \dd , \f d \times \f q\right ) + \mu_4 \left ( \syv, \sy v \right ) + \left ( \dd \times \tq , \f d \times \f q \right ) }\\
={}& \intt{\left ( ( \vv \cdot \nabla ) \vv , \f v  \right )+ \mu_1 \left (\dd \cdot \syv \dd , \dd \cdot \sy v \dd \right ) + ( \mu_5+ \mu_6) \left ( \syv \dd , \sy v \dd \right ) }\\
&-\intt{( \mu_2+ \mu_3 ) \left (\dd\times \tq ,\dd \times  \sy v \dd \right )+\lambda( \mu_2+ \mu_3 ) \left (\dd\times \syv \dd  ,\dd \times  \sy v \dd \right )+ \left (\dd\times\tq , \dd\times\sk v \dd\right )}\\
&- \intt{ \left (\rot \dd\left ( ( \vv \cdot \nabla) \dd -\skv \dd +\lambda \syv \dd\right ), \rot \dd  \f q \right )+ \left (\nabla \dd ^T F_{\f S}( \dd , \nabla \dd) ; \nabla \f v \right )}\,.
\end{split}\label{strongeq}
\end{align}
Remark that $\dd\cdot \sk v \dd =0$.
Inserting the equations~\eqref{measeq} and~\eqref{strongeq} in~\eqref{vormeins} gives 
\begin{align*}
\begin{split}
\frac{1}{2}\mathcal{E}(t)& + \int_0^t\mathcal{W}(s) \de s - \mathcal{E}(0)  
-\left ((\nabla \f d(0)-\nabla \dd (0))\o (\f d(0)-  \dd(0))\right ) \dreidots \f \Theta \dreidots (\nabla \dd (0)\o \dd(0))
\\
&- c\| \nabla \dd \o \dd \|_{L^\infty( \f L^\infty)}^2 \left \|   \f d(0)- \dd(0)    \right \|_{\Le}^2 
\\
\leq{}& \delta\int_0^t\mathcal{W}(s) \de s + C_{\delta} \int_0^t \mathcal{E}(s)\de s + \int_0^t \left [( ( \vv \cdot \nabla ) \vv , \f v ) + ( ( \f v \cdot\nabla ) \f v , \vv ) \right ]\de s \\
& + \mu_1 \int_0^t  \left [  \left ( \dd \cdot \syv \dd , \sy v : \left ( \dd \o \dd - \f d \o \f d   \right )   \right ) +  \left ( \f d  \cdot \sy v \f d  , \syv : \left ( \f d  \o \f d  - \dd \o \dd   \right )   \right ) \right ] \de s \\
& + ( \mu_5 + \mu_6  ) \int_0^t  \left [\left ( \syv \dd , \sy v ( \dd - \f d )\right  )  + \left (    \sy v \f d , \syv ( \f d - \dd ) \right ) \right ]   \de t\\
& +\inttet{\left [ \left (\dd \times \skv \dd- \f d \times \skv \f  d , \f d \times \f q \right )+ \left (\f d \times \sk v \f d- \dd \times \sk v \dd  , \dd \times \tq \right )  \right ]}\\
& -  ( \mu_2 + \mu_3 ) \int_0^t \left [\left ( \dd\times( \sy v - \syv)\dd , \dd \times \tq \right )+ \left ( \f d \times (\syv - \sy v)\f d  , \f d \times \f q \right )\right ]  \de s
 \\
& + \lambda \int_0^t  \left [ \left (\f d \times  \sy v \f d -\dd \times \sy v \dd , \dd\times \tilde{ \f q}  \right )+ \left ( \dd \times \syv \dd - \f d \times \syv \f d , \f d \times \f q \right ) \right ]
\de s \\
& - 2\lambda( \mu_2+ \mu_3 )  \inttett{\left ( \dd \cdot \syv \dd , \f d \cdot\sy v \f d \right )- \left (\syv \dd ,\sy v \f d \right )}\\
& - \lambda( \mu_2+ \mu_3 )  \inttett{\left ( \f d \times \sy v \f d , \f d\times  \syv \f d\right ) + \left (\dd \times \sy v \dd , \dd \times \syv \dd\right )}\\
&+ \int_0^t \left [(\dd \times ( \vv \cdot \nabla ) \dd ,\f d \times \f q) + (\f d \times( \f v \cdot \nabla )\f d ,\dd \times  \tilde{\f q})\right ] \de s \\
&-\int_0^t\left [ ( \nabla \dd^T F_{\f S}(\dd,\nabla \dd) ; \nabla \f v )  + \ll{ \nu_s ,  \f S ^T F_{\f S}(\f h , \f S) : \nabla \vv    }\right ]\de s  -2\int_0^t \ll{\mu_s, \f \Gamma \dreidots   (\f \Gamma \cdot \nabla \vv) } \de s 
\\
={}&  \delta\int_0^t\mathcal{W}(s) \de s + C_{\delta} \int_0^t \mathcal{E}(s)\de s +I_1  + \mu_1 I_2 + ( \mu_5 +\mu_6 ) I_3 + I_4 -(\mu_2+\mu_3)I_5 +\lambda I_6\\ & \qquad\qquad\qquad\qquad\qquad\qquad-2\lambda(\mu_2+\mu_3) I_7- \lambda(\mu_2+\mu_3) I_8 + I_9 - 2I_{10}\, . 
\end{split}
\end{align*}

The terms are estimated individually by integrals over $\mathcal{E}$ and $\mathcal{W}$. 
We start with $I_1$. After rearranging one can estimate
\begin{align*}
I_1 &= \int_0^t{\left [( ( \f v \cdot \nabla ( \f v - \vv ) , \vv - \f v ) + ( ( ( \f v - \vv )\cdot \nabla ) \vv , \vv- \f v ) \right ]}\de s\\&
 = \int_0^t{\left ((\f v - \vv )\otimes ( \vv- \f v ) ; \syv\right )  }\de s \\ & 
  \leq C_\delta  \inttet{\|\syv\|_{\f L^{3}}^2\|\f v - \vv \|_{\Le}^2} + \delta \inttet{\| \f v - \vv\|_{\f L^6}^2}\, .
\end{align*}

The term $I_2$ can be rearranged as
\begin{align*}
I_2  ={}&\intte{\left ( \dd \cdot \syv \dd , \sy v : \left ( \dd \otimes \dd - \f d \otimes\f d\right )\right )}
   +\inttet{\left (\f d \cdot \sy v \f d , \syv: \left ( \f d \otimes \f d - \dd \otimes \dd \right ) \right )}\\
 ={}& \inttet{ \left (\dd \cdot \syv \dd , \left (\sy v -\syv\right ): \left ( \dd \otimes \dd - \f d \otimes\f d\right )\right )}
 \\&  
  +\inttet{\left (\f d \cdot \sy v \f d - \dd \cdot \syv \dd , \syv: \left ( \f d \otimes \f d - \dd \otimes \dd \right )\right ) }\\
  ={}& \inttet{ \left (\dd \cdot \syv \dd , \left (\sy v -\syv\right ): \left ( \dd \otimes (\dd - \f d) \right )\right )}\\
& + \inttet{ \left (\dd \cdot \syv \dd , \left (\sy v\f d -\syv\dd\right )\cdot  \left ( \dd - \f d \right )\right )}
+  \inttet{\left (\dd \cdot \syv \dd, \syv: (\dd - \f d ) \otimes ( \dd - \f d )  \right )}
 \\&   +2\inttet{\left (\f d \cdot \sy v \f d - \dd \cdot \syv \dd , \syv:  (\f d - \dd )\otimes \dd \right ) }
  \\&   +\inttet{\left (\f d \cdot \sy v \f d - \dd \cdot \syv \dd , \syv:  (\f d- \dd) \otimes (\f d - \dd)  \right )}\,.
\end{align*}
It is thus possible to estimate $I_2$ by
\begin{align*}
I_2\leq{}&  \delta\inttet{\| \sy v- \syv\|_{\Le}^2}+  C_\delta  \| \dd \|_{L^\infty(\f L^\infty)}^2 \inttet{\|\dd \cdot \syv \dd\|_{L^3}^2\|\f d - \dd\|_{\f L^6}^2} \\
 &  +\delta\inttet{\| \sy v\f d- \syv\dd\|_{\Le}^2}+ C_\delta  \inttet{\|\dd \cdot \syv \dd\|_{ L^3}^2\|\f d - \dd\|_{\f L^6 }^2} \\ 
 &  +   \inttet{ \|\dd \cdot \syv \dd\|_{ L^3} \| \syv \|_{\f L^3}\|\f d - \dd \|_{\f L^6}^2} \\ 
 &  + \delta\inttet{\| \f d \cdot \sy v\f d- \dd \cdot \syv\dd\|_{L^2}^2}+ C_\delta  \| \dd \|_{L^\infty(\f L^\infty)}^2 \inttet{\| \syv \|_{\f L^3}^2\| \f d - \dd\|_{\f L^6}^2} \\
&  + \delta\inttet{\| \f d \cdot \sy v\f d- \dd \cdot \syv\dd\|_{L^2}^2}+ C_\delta   \inttet{\| \syv \|^2_{\f L^3}\| \f d -\dd \|_{\f L^{12}}^4 }  \\
\leq{}& \delta \inttet{\mathcal{W}(s)} +  \inttet{\mathcal{K}(s)\mathcal{E}(s)}
\end{align*}
Regarding the term $I_3$ we observe
\begin{align*}
I_3={}&  \inttett{\left ( \syv \dd, \sy v( \dd -\f d)\right ) + \left ( \sy v \f d , \syv ( \f d - \dd)\right ) }\\
 ={}&   \inttett{\left ( \left ( \syv \dd, \left (\sy v- \syv\right )( \dd -\f d)\right )+\left ( \sy v \f d- \syv \dd  , \syv ( \f d - \dd)\right ) \right )  }\\ 
\leq{}& \delta\inttet{\| \sy v- \syv\|_{\Le}^2} +C_\delta   \inttet{\| \syv\dd  \|^2_{\f L^3}\|\f d - \dd \|_{\f L^6}^2} \\
&  +\delta\inttet{\| \sy v\f d- \syv\dd \|_{\Le}^2} +C_\delta   \inttet{\| \syv  \|^2_{\f L^3}\|\f d - \dd \|_{\f L^6}^2} \\ 
\leq{}& \delta \inttet{\mathcal{W}(s)} +  \inttet{\mathcal{K}(s)\mathcal{E}(s)}
\end{align*}
Similar, the term $I_4$ can be rearranged
\begin{align*}
I_4
={}&   \inttett{\left  (\f d \times\f q , (\dd -\f d) \times \skv\f d \right ) + (\dd \times \tq, (\f d- \dd) \times \sk v \dd  )}
\\& +\inttett{ \left (\dd \times \skv (\dd- \f d ) , \f d \times \f q \right )+ \left (\f d \times \sk v( \f d- \dd) , \dd \times \tq \right )}
\\
={}&  
\inttett{  \left  (\f d \times\f q- \dd \times \tq  , (\dd -\f d) \times \skv\f d \right ) + \left (\dd \times \tq, (\dd- \f d) \times \left (\skv (\f d -  \dd )+ \left ( \sk v - \skv \right ) \dd  \right ) \right )}
\\
& +\inttett{ \left (\dd \times \skv (\dd- \f d ) , \f d \times \f q -  \dd \times \tq  \right )+ \left (\left (( \dd- \f d) \times \skv + \f d \times (\skv - \sk v)\right )( \dd- \f d) , \dd \times \tq \right ) }
\\
={}&   \inttett{\left  (\f d \times\f q- \dd \times \tq  , (\dd -\f d) \times \skv(\f d- \dd) \right )+ \left  (\f d \times\f q- \dd \times \tq  , (\dd -\f d) \times \skv\dd \right )  
}
\\
&+\inttett{ (\dd \times \tq, (\f d- \dd) \times \left (\sk v - \skv\right ) \dd )+\left (\dd \times \skv (\dd- \f d ) , \f d \times \f q- \dd \times \tq \right )}
\\& +\inttett{  2\left ((\f d- \dd) \times \skv( \f d- \dd) , \dd \times \tq \right )+ \left ((\f d-\dd) \times \left (\sk v- \skv\right )( \f d- \dd) , \dd \times \tq \right ) }\\
& + \inttet{\left (\dd \times \left (\sk v- \skv\right )( \f d- \dd) , \dd \times \tq \right )}
\end{align*}
This can now be estimated by
\begin{align*}
I_4
\leq{}& \delta\inttet{\left \|\f d \times\f q- \dd \times \tq\right \|_{\Le}^2} + C_{\delta}  \inttet{\|\skv \|_{\f L^3}^2\left (\|\f d - \dd\|_{\f L^{12}}^4+\|\dd\|_{L^\infty(\f L^\infty)}^2\|\f d - \dd\|_{\f L^{6}}^2\right )}\\
& +
\delta \inttet{\| \sk v -\skv \|_{\Le}^2} + C_{\delta} \|\dd\|_{L^\infty(\f L^\infty)}^2 \inttet{\|\tq \|_{\f L^3}^2\left (\|\f d - \dd\|_{\f L^{12}}^4+\|\dd\|_{L^\infty(\f L^\infty)}^2\|\f d - \dd\|_{\f L^{6}}^2\right )}\\
& 
+ 2   \|\dd\|_{L^\infty(\f L^\infty)}\inttet{\|\skv \|_{\f L^3}  \|\tq\|_{\f L^3}\|\f d - \dd\|_{\f L^{6}}^2}\\
\leq{}& \delta \inttet{\mathcal{W}} +  \inttet{\mathcal{K}(s)\mathcal{E}}\,.
\end{align*}


%
The term $I_5$ can be rearranged as
\begin{align}\label{I5}
\begin{split}
I_5 ={}&
 \int_0^t \left [\left ( \dd\times( \sy v - \syv)\dd , \dd \times \tq \right )+ \left ( \f d \times (\syv - \sy v)\f d  , \f d \times \f q \right )\right ]  \de s
 \\
={}&
\inttet{\left (\left (  \dd \times \sy v \dd - \f d \times \sy v \f d , \dd \times \tq  \right )+\left (\f d \times \syv \f d - \dd \syv \dd , \f d \times \f q \right )\right )}
\\
& - \inttet{\left (\f d \times \sy v \f d- \dd \times \syv \dd   , \f d \times \f q - \dd \times \tq  \right )}
\, .
\end{split}
\end{align}
Using the  simple reformulation
\begin{align*}
\f d \times \sy v \f d- \dd \times \sy v \dd= (\f d-\dd) \times \sy v (\f d-\dd)+  (\f d -\dd ) \times \sy v \dd + \dd \times \sy v ( \f d -\dd)\,,
\end{align*}
 the first line of the right-hand side of~\eqref{I5} can be rearranged to
\begin{align*}
\int_0^t&\left (\left (  \dd \times \sy v \dd - \f d \times \sy v \f d , \dd \times \tq  \right )+\left (\f d \times \syv \f d - \dd \syv \dd , \f d \times \f q \right )\right )\de s \\
= {}& \inttet{\left ((\f d-\dd) \times (\sy v- \syv) (\f d-\dd)+  (\f d -\dd ) \times (\sy v- \syv) \dd  , \dd \times \tq \right ) }
\\
& +\inttet{\left (  \dd \times (\sy v - \syv)( \f d -\dd) , \dd \times \tq \right ) }
\\
& +\inttet{\left ((\f d-\dd) \times \syv (\f d-\dd)+  (\f d -\dd ) \times \syv \dd + \dd \times \sy v ( \f d -\dd), \f d \times \f q - \dd \times \tq    \right )}
\end{align*}
and estimated by
\begin{align*}
&\inttett{\left (  \dd \times \sy v \dd - \f d \times \sy v \f d , \dd \times \tq  \right )+\left (\f d \times \syv \f d - \dd \syv \dd , \f d \times \f q \right )}\\
\leq{}& 
\delta \inttet{\| \sy v - \syv \|_{\Le}^2 }+ \delta \inttet{\| \f d \times \f q - \dd \times \tq\|_{\Le}^2}\\
&+ C_\delta \| \dd\|_{L^\infty(\f L^\infty)}^2 \inttet{ \left ( \|  \tq\|_{\f L^3}^2+ \| \syv \|_{\f L^{3}}^2\right ) \left  (\| \f d -\dd \|_{\f L^{12}}^4 +\| \dd\|_{L^\infty(\f L^\infty)}^2 \| \f d -\dd \|_{\f L^6}^2\right )}\,.
\end{align*}
For $I_5$ and $I_6$ together we have
\begin{align*}
-(\mu_2+\mu_3)I_5 + \lambda I_6 \leq{}& ((\mu_2+\mu_3)-\lambda)  \inttet{\left ( \dd \times \syv \dd -\f d \times \sy v \f d ,\dd\times \tq -\f d \times \f q  \right )}\\& + \delta \inttet{\mathcal{W}(s)} + \inttet{\mathcal{K}(s)\mathcal{E}(s)}\, .
\end{align*}

For the term $I_8$ we observe (see Section~\ref{sec:not})
\begin{align*}
 &\inttett{\left ( \f d \times \sy v \f d , \f d\times  \syv \f d\right ) + \left (\dd \times \sy v \dd , \dd \times \syv \dd\right )}
 \\
 &=  \inttett{\left ( \sy v \f d ,  \syv \f d\right )-\left ( \f d \cdot \sy v \f d , \f d\cdot  \syv \f d\right )}\\
 &\quad+  \inttett{ \left ( \sy v \dd ;  \syv \dd\right )- \left (\dd\cdot \sy v \dd , \dd \cdot \syv \dd\right )}
\end{align*}
Comparing the terms $I_7$ and $I_8$ with $I_2$ and $I_3$, we see
\begin{align*}
-2 I_7 -  I_8 = ( I_2 - I_3)\,.
\end{align*}
The appearing terms $I_7$ and $I_8$ can, hence, be estimated as in the cases of $I_2$ and $I_3$.

The definition of the variational derivative~$\f q$~\eqref{qdef} and the derivative $\nicefrac{\partial F}{\partial \nabla \f d}$ of the free energy potential inserted in the term $I_9$ gives
\begin{align*}
I_9  ={}& \int_0^t\left [
\ll{\nu_s , \f S : \f \Lambda : \left (\f \Upsilon: \left ( \rot \dd ( \vv \cdot \nabla )\dd  \otimes \f S  \right )\right )}+ \left (  \left ( \rot{\f d}^T  \nabla  (\rot\dd^T( \vv \cdot \nabla) \dd )\right ); \,\f \Lambda : \nabla \f d  \right )\right ]\de s 
\\
&  + \int_0^t\left [(\f d \times( \f v \cdot \nabla ) \f d , -\dd \times  \Lap \dd ) - \left ( \nabla \f v ; \nabla \dd^T \f \Lambda : \nabla \dd \right ) -\ll{\nu_s, (\f S \nabla \vv ): \f \Lambda : \f S }\right ]\de s 
\\
& + \int_0^t
\left [\left (\rot{\f d}^T \nabla (\rot{\dd} ( \vv \cdot \nabla ) \dd ) \o \f d \dreidotkom \f \Theta \dreidots \nabla \f d \otimes \f d \right ) 
+  \left ( \rot{\f d} ( \f v \cdot \nabla ) \f d , -\rot\dd  \di \left ( \dd \cdot \f \Theta \dreidots \nabla \dd \otimes \dd \right )\right ) \right ]
\de s \\&+\int_0^t
\ll{\nu_s, \f \Upsilon : \left ( \rot{\f d }(\vv \cdot \nabla)\dd \o \f S\right )\o \f h \dreidots \f \Theta \dreidots \f S \o \f h}
\de s 
\\
& + \inttett{\left (\nabla \dd \o \left ( \rot{\dd}^T \rot{\f d} ( \f v \cdot \nabla ) \f d \right )\dreidotkom  \f \Theta \dreidots \nabla \dd \o \dd      \right )  +
 \ll{\nu_s, \f S \otimes \left ( \rot{\f h}^T \rot{\dd}( \vv \cdot \nabla ) \dd \right ) \dreidots \f \Theta \dreidots \f S \o \f h }}\\& - 
\inttet{\ll{\nu_s, (\f S\nabla\vv) \o \f h \dreidots  \f \Theta \dreidots \f S \o \f h }}
  -\int_0^t \left ( \nabla \f v ; \nabla \dd ^T \left ( \dd \cdot \f \Theta \dreidots \nabla \dd \o \dd \right ) \right ) \de s \\
 ={}& J_1+J_2\,.
\end{align*}
All terms involving the tensor $\f \Lambda$, i.e. the first two lines, are denoted by the abbreviation $J_1$. The terms involving the tensor $\f \Theta$ are denoted by the abbreviation $J_2$. 
In the following, a vector identity is of need. Therefore, we consider a sequence $\{ \f d _n\}\subset L^2(0,T; \Hc)$ approximating the function $\f d$ such that $ \f d_n  \rightharpoonup \f d$ in $L^2(0,T; \He)$. The sequence $\{ \f d _n\}$ can be chosen in accordance to Definition~\ref{def:gradmeas}.
Since $\vv$ is solenoidal, we see
\begin{align}
\begin{split}
0={}&
\left ( \di \vv , \left ( \nabla \dd: \f \Lambda : \nabla \fn d \right ) \right )  
= - \left ( (\vv\cdot \nabla ) \nabla \dd ; \f \Lambda \nabla \f d    \right )-  \left ( (\vv\cdot \nabla ) \nabla \f d  ; \f \Lambda \nabla \dd    \right ) \\
={}& \left ( \nabla \vv ; \nabla \dd ^T \f \Lambda : \nabla \fn d \right ) + \left ( \nabla \vv; \nabla \fn d ^T \f \Lambda : \nabla \dd \right )  \\
&- \left ( (\vv \cdot \nabla ) \fn d , - \Lap \dd \right ) - \left  (\f \Lambda : \nabla ( ( \vv \cdot \nabla ) \dd ); \nabla \fn d \right )\\
={}& \left ( \nabla \vv ; \nabla \dd ^T \f \Lambda : \nabla \fn d \right ) + \left ( \nabla \vv; (\nabla \fn d- \nabla \dd) ^T \f \Lambda : \nabla \dd \right )   + \left ( \nabla \vv;  \nabla \dd ^T \f \Lambda : \nabla \dd \right )\\
&- \left ( (\vv \cdot \nabla ) \fn d , - \Lap \dd \right ) - \left  (\f \Lambda : \nabla ( ( \vv \cdot \nabla ) \dd ); \nabla \fn d \right )\,.
\end{split}
\label{ZERO1}
\end{align}
The integration in time is omitted for brevity.
Going to the limit in $n$ and using Lemma~\ref{lem:vschlange} shows
\begin{align*}
0={}&\left ( \nabla \vv ; \nabla \dd ^T \f \Lambda : \nabla \f d \right ) + \left ( \nabla \vv; (\nabla \f d- \nabla \dd) ^T \f \Lambda : \nabla \dd \right )   + \left ( \nabla \vv;  \nabla \dd ^T \f \Lambda : \nabla \dd \right )\\
&- \left ( \f d \times (\vv \cdot \nabla ) \f d , - \f d\times \Lap \dd \right ) - \left  ( \nabla ( \rot{\dd}^T\rot{\dd}( \vv \cdot \nabla ) \dd );\f \Lambda : \nabla \f d \right )\,.
\end{align*}
Adding this zero to $J_1$, we get
\begin{align*}
J_1 +0 ={}& 
-\int_0^t \ll{\nu_s,  \left ((\f S-\nabla \dd )^T \f \Lambda : (\f S -\nabla \dd )\right ): \nabla \vv } \de s-  \int_0^t \left ( \nabla \f v - \nabla \vv ; \nabla \dd^T \f \Lambda : \nabla \dd \right ) \de s 
\\&+ \inttett{ 
\left (\f d \times ( \f v \cdot\nabla  ) \f d , -\dd \times \Lap \dd \right )- \left ( \f d \times (\vv \cdot \nabla ) \f d , - \f d\times \Lap \dd \right ) 
 }
\\
&+ \inttett{ 
 \ll{\nu_t, \f S : \f \Lambda : \left ( \f \Upsilon \left ( \rot{\dd} ( \vv \cdot \nabla)\dd \otimes (\f S- \nabla \dd) \right )\right )} 
+  \left ( \rot{\f d-\dd}^T  \nabla  (\rot\dd^T( \vv \cdot \nabla) \dd ); \,\f \Lambda : \nabla \f d \right )
 }
\,.
\end{align*}
Since $\f v $ and $\vv $ are solenoidal, an integration-by-parts and Lemma~\ref{lem:vschlange} gives
\begin{align*}
\left ( \nabla \f v - \nabla \vv ; \nabla \dd^T \f \Lambda : \nabla \dd \right ) ={}& - \left (\f v-\vv , \nabla \dd ^T \Lap \dd \right )-\frac{1}{2}\left (\f v-\vv , \nabla \left (\nabla \dd : \f \Lambda : \nabla \dd\right ) \right )\\ ={}&  -\left (\rot{\dd}^T \rot{\dd} \left ((\f v-\vv)\cdot \nabla\right ) \dd , - \Lap \dd \right ) \, .
\end{align*}
After some additional rearrangements, we see
\begin{align*}
J_1+0 ={}& -\int_0^t \ll{\nu_s,  (\f S-\nabla \dd )^T \f \Lambda : (\f S -\nabla \dd ): \nabla \vv } \de s\\&+ \inttet{\left ( ( \f d - \dd ) \times \nabla \dd ( \f v - \vv)  ,- \dd \times\Lap
 \dd   \right )}\\
 &+ \inttet{\left (( \f d - \dd) \times \left ( (\nabla \f d - \nabla \dd) ( \f v - \vv)\right ),- \dd \times \Lap \dd  \right )}\\&+ \inttet{\left ( \dd \times \left ( (\nabla \f d - \nabla \dd) ( \f v - \vv)\right ),- \dd \times \Lap \dd  \right )}\\
 &+ \inttet{
 \ll{\nu_s, ( \f S- \nabla \dd) : \f \Lambda : \left (\f \Upsilon :\left ( \rot\dd ( \vv\cdot\nabla)\dd\o ( \f S - \nabla \dd)\right )\right )}} \\& 
 +\inttet{  \left (   \rot{\f d- \dd }^T  \nabla  (\rot\dd^T( \vv \cdot \nabla) \dd );\, \f \Lambda :( \nabla \f d- \nabla \dd)   \right )
}\\
 &+ \inttet{ 
\left ((\f d -\dd)\times ( \nabla \f d - \nabla \dd)  \vv , (\f d -\dd )\times \Lap \dd \right )}
\\&+ \inttett{ 
\left (\dd\times ( \nabla \f d - \nabla \dd)  \vv , (\f d -\dd )\times \Lap \dd \right )
- \left ( (\f d- \dd)  \times (\vv \cdot \nabla )  \dd , (\dd- \f d)
\times \Lap \dd \right ) 
 }
\end{align*}
Thus, the term $J_1$ can be estimated by
\begin{align*}
J_1 \leq {}&
c \int_0^t\| \nabla \vv \|_{\f L^\infty}\ll{\nu_s,   | \f S - \nabla \dd|^2 } \de s+ \delta \inttet{\| \f v - \vv \|_{\f L^6}^2}\\
&  + C_\delta \| \dd\|_{L^\infty(\f L^\infty)}^2 \inttet{\| \dd\|_{\f W^{2,3}}^2 \left ( \| \dd\|_{L^\infty(\f W^{1,3})}^2\| \f d -\dd\|_{\f L^6}^2 + \ll{\nu_s, | \f S - \nabla \dd|^2 | \f h - \dd|^2}  \right )}
\\
&+ C_\delta \| \dd\|_{L^\infty(\f L^\infty)}^4  \inttet{\| \dd\|_{\f W^{2,3}}^2\ll{\nu_s, | \f S - \nabla \dd|^2 }} 
\\
&+c \| \dd\|_{L^\infty(\f L^\infty)} \inttet{\| \vv\|_{\f L^\infty}\| \dd\|_{\f W^{1,\infty}}\ll{\nu_s, | \f S- \nabla \dd|^2}}\\
&+c \inttet{\| \vv\|_{\f L^\infty}\left ( \| \dd\|_{L^\infty(\f L^\infty)}\| \dd\|_{\f W^{2,3}} + \| \dd\|_{\f W^{1,6}}^2\right ) \left (\ll{\nu_s, | \f S - \nabla \dd|^2 }+ \|\f d -\dd\|_{\f L^6}^2\right )}\\
&+c \inttet{\| \vv\|_{\f W^{1,3}} \| \dd\|_{L^\infty(\f L^\infty)}\| \dd\|_{\f W^{1,\infty}} \left (\ll{\nu_s, | \f S - \nabla \dd|^2 }+ \|\f d -\dd\|_{\f L^6}^2\right )}\\
&+ c \inttet{\| \vv\|_{\f L^\infty} \| \dd\|_{\f W^{2,3}} \left (\| \f d - \dd \|_{\f L^{12}}^4  + \ll{\nu_s , | \f S- \nabla \dd|^2}\right )}
\\
&+ c  \inttet{\| \vv\|_{\f L^\infty}  \| \dd\|_{\f W^{2,3}}  \| \dd\|_{L^\infty(\f L^\infty)}\left (\ll{\nu_s, | \f S- \nabla \dd|^2 }+ \| \f d -\dd\|_{\f L^6}^2\right ) }\\
&+ c  \inttet{\| \vv\|_{\f L^\infty}  \| \dd\|_{\f W^{2,3}}  \| \dd\|_{L^\infty(\f W^{1,3})} \| \f d- \dd \|_{\f L^{6}}^2 }
%
 \, .
\end{align*}

In order to estimate the term $J_2$, there are additional vector identities of need.
The function $\f d$ associated to the generalized gradient Young measure $(\nu^o,m_t,\nu^\infty)$ 
can be approximated by a sequence of functions $\{\fn d \}\subset L^\infty(0,T; \Hc)$ in the sense of generalized Young measures (see Definition~\ref{def:gradmeas}). 
To show the identity, the integration is again left out.

Since $\vv$ is solenoidal, there holds
\begin{align*}
0 ={}& \left ( \di \vv ,   \nabla \f d_{n } \o \f d_{n } \dreidots \f\Theta \dreidots \nabla \dd \o \dd  \right )\\
={}&-  \left ( ( \vv \cdot \nabla ) \nabla \f d_{n } \o \f d_{n } \dreidotkom \f\Theta \dreidots \nabla \dd \o \dd  \right ) - \left (  \nabla \f d_{n } \o ( \vv \cdot \nabla )\f d_{n } \dreidotkom \f\Theta \dreidots \nabla \dd \o \dd  \right )\\
&- \left (  \nabla \f d_{n } \o \f d_{n } \dreidotkom \f\Theta \dreidots ( \vv \cdot \nabla )\nabla \dd \o \dd  \right )- \left (  \nabla \f d_{n } \o \f d_{n } \dreidotkom \f\Theta \dreidots \nabla \dd \o ( \vv \cdot \nabla )\dd  \right )\\
={}& -\left (  ( \vv \cdot \nabla ) \f d_{n } , - \di \left ( \f d_{n } \cdot \f \Theta \dreidots \nabla \dd \o \dd \right ) + \nabla \f d_{n } : \f \Theta \dreidots \nabla \dd \o \dd \right ) \\
& -\left ( \nabla \left  (( \vv \cdot \nabla ) \dd\right ) \o   \dd \dreidotkom \f \Theta \dreidots \nabla \f d_{n } \o \f d_{n } \right )  -\left (  \nabla \dd \o ( \vv \cdot \nabla ) \dd \dreidotkom \f \Theta \dreidots \nabla \f d_{n } \o \f d_{n } \right )
\\
&+ \left ( \nabla \vv ; \nabla \f d_{n } ^T \left ( \f d_{n } \cdot \f \Theta \dreidots \nabla \dd \o \dd \right )\right )
+ \left ( \nabla \vv ; \nabla \dd ^T \left ( \dd \cdot \f \Theta \dreidots \nabla \f d_{n } \o \f d_{n } \right )\right )\\
={}&K_{1,n }+K_{2,n }+K_{3,n }\,.
\end{align*}
The three terms abbreviate the three lines. 
For the first two lines, going to the limit in the sense of generalized Young measures shows
\begin{align*}
K_{1,\infty}+K_{2,\infty}
={}&
+ \ll{ \nu_s, \left ( \f S  \vv \o \f S  ^T -  \f S\otimes \left (\f S \vv\right )\right )  \dreidots \f \Theta \dreidots \nabla \dd \o \dd      }+ \left ( \nabla \f d  \vv \o  \f d  ;  \di \left (\f \Theta \dreidots \nabla \dd \o \dd\right )  \right)  \\
& -\left ( \nabla \left  (( \vv \cdot \nabla ) \dd\right ) \o   \dd \dreidotkom \f \Theta \dreidots \nabla \f d \o \f d \right )  -\left (  \nabla \dd \o ( \vv \cdot \nabla ) \dd \dreidotkom \f \Theta \dreidots \nabla \f d \o \f d \right )
\,.
\end{align*}
The term $ \di \left (\f \Theta \dreidots \nabla \dd \o \dd\right )$ has to be understood as $ \sum_{j=1}^3\partial_{\f x_j} \left (\f \Theta \dreidots \nabla \dd \o \dd\right )_{ijk}$.
Adding and subtracting the terms
\begin{align*}
\left (  ( \vv \cdot \nabla ) \f d , - \di \left ( \dd \cdot \f \Theta \dreidots \nabla \dd \o \dd \right ) + \nabla \dd : \f \Theta \dreidots \nabla \dd \o \dd \right ) 
+ \left ( \nabla ( ( \vv \cdot \nabla \dd ) ; \f d \cdot \f \Theta \dreidots \nabla \f d \o \f d  \right )
+\ll{\nu_s, \f S \o ( \vv \cdot \nabla ) \dd \dreidots \f \Theta \dreidots \f S \o \f h    }\,,
\end{align*}
 and some additional rearrangements give
\begin{align*}
K_{1,\infty}+K_{2,\infty}
={}
&-\left (  ( \vv \cdot \nabla ) \f d , - \di \left ( \dd \cdot \f \Theta \dreidots \nabla \dd \o \dd \right ) + \nabla \dd : \f \Theta \dreidots \nabla \dd \o \dd \right ) 
- \left ( \nabla ( ( \vv \cdot \nabla \dd ) ; \f d \cdot \f \Theta \dreidots \nabla \f d \o \f d  \right )
\\
&- \ll{\nu_s, \f S \o ( \vv \cdot \nabla ) \dd \dreidots \f \Theta \dreidots \f S \o \f h    }
\\
&- \ll{ \nu_s, \left ( (\f S - \nabla \dd ) \vv \o (  \nabla \dd -\f S  )^T + ( \f S- \nabla \dd ) \otimes \left (( \f S - \nabla \dd )\vv\right )\right )  \dreidots \f \Theta \dreidots \nabla \dd \o \dd      }\\
& + \left ( ( \nabla \f d - \nabla \dd ) \vv \o ( \f d - \dd ) ;  \di \left (\f \Theta \dreidots \nabla \dd \o \dd\right )  \right) 
\\
&- \left ( \nabla ( ( \vv \cdot \nabla ) \dd) ;   (\dd- \f d) \cdot \f \Theta \dreidots \left (\nabla \f d \o \f d - \nabla \dd \o \dd \right ) \right )\\
&-\ll{ \nu_s, (\f S - \nabla \dd )\o ( \nabla \dd \vv)  \dreidots\f \Theta \dreidots \left (\f S \o \f h - \nabla \dd \o \dd\right )    }
\,.
\end{align*}

We observe that the last four lines can be estimated by
\begin{align*}
&\hspace{-2em}-\inttet{ \ll{ \nu_s, \left ( (\f S - \nabla \dd ) \vv \o (  \nabla \dd -\f S  )^T + ( \f S- \nabla \dd ) \otimes \left (( \f S - \nabla \dd )\vv\right )\right )  \dreidots \f \Theta \dreidots \nabla \dd \o \dd      }}\\
&\hspace{-2em} +\inttet{ \left ( ( \nabla \f d - \nabla \dd ) \vv \o ( \f d - \dd ) ;  \di \left (\f \Theta \dreidots \nabla \dd \o \dd\right )  \right) }
- \inttet{\left ( \nabla ( ( \vv \cdot \nabla ) \dd) \otimes   (\dd- \f d) \dreidotkom \f \Theta \dreidots \left (\nabla \f d \o \f d - \nabla \dd \o \dd \right ) \right )}\\
&\hspace{-2em}-\inttet{\ll{ \nu_s, (\f S - \nabla \dd )\o ( \nabla \dd \vv)  \dreidots\f \Theta \dreidots \left (\f S \o \f h - \nabla \dd \o \dd\right )    }}
\\\leq{}& c\| \dd\|_{L^\infty( \f L^\infty)} \inttet{\| \vv\|_{ \f L^\infty}\| \dd\|_{ \f W^{1,\infty}}\ll{\nu_s , | \f S - \nabla \dd|^2}} \\
&+ c \inttet{\| \vv\|_{ \f L^\infty}\left (\| \dd\|_{ \f W^{2,3}}\| \dd\|_{L^\infty( \f L^\infty)}+ \| \dd\|_{ \f W^{1,6}}^2\right )\left (\| \f d - \dd \|_{\f L^6}^2+ \ll{\nu_s, | \f S -\nabla \dd|^2}\right )}\\
&+c   \inttet{ \| \vv\|_{\f L^\infty}\| \dd\|_{\f W^{2,3}}\left (\| \f d -\dd \|_{\f L^6}^2 + \ll{\nu_s, \left | \f \Theta \dreidots (\f S \o \f h - \nabla \dd \o \dd)\right |^2}\right )}\\
&+c  \inttet{\| \vv\|_{ \f W^{1,3}}\| \dd\|_{ \f W^{1,\infty}}\left (\| \f d -\dd \|_{\f L^6}^2 + \ll{\nu_s,\left  | \f \Theta \dreidots (\f S \o \f h - \nabla \dd \o \dd)\right |^2}\right )}\\
&+c \inttet{ \| \vv\|_{ \f L^\infty}\| \dd\|_{ \f W^{1,\infty}}\left (\ll{\nu_s, |\f S -\nabla \dd^2}+ \ll{\nu_s, \left | \f \Theta \dreidots (\f S \o \f h - \nabla \dd \o \dd)\right |^2 }\right )}
\end{align*}
such that with Lemma~\ref{lem:vschlange} the terms $K_{1,\infty}$ and $K_{2,\infty}$ can be estimated by
\begin{align*}
\int_0^t (K_{1,\infty}+K_{2,\infty})\de s 
\leq {}
&-\inttett{\left (  ( \vv \cdot \nabla ) \f d , - \di \left ( \dd \cdot \f \Theta \dreidots \nabla \dd \o \dd \right ) + \nabla \dd : \f \Theta \dreidots \nabla \dd \o \dd \right ) 
- \left ( \nabla ( ( \vv \cdot \nabla \dd ) ; \f d \cdot \f \Theta \dreidots \nabla \f d \o \f d  \right )}
\\
&- \inttet{\ll{\nu_s, \f S \o ( \vv \cdot \nabla ) \dd \dreidots \f \Theta \dreidots \f S \o \f h    }}+ \int_0^t \mathcal{K}(s) \mathcal{E}(s)\de s
\,.
\end{align*}
The term $K_{3,\infty}$ can be transformed to 
\begin{align*}
K_{3,\infty}={}& \left ( \nabla \vv ; \nabla \f d ^T \left ( \f d \cdot \f \Theta \dreidots \nabla \dd \o \dd \right )\right )+ \left ( \nabla \vv ; \nabla \dd ^T \left ( \dd \cdot \f \Theta \dreidots  (\nabla \f d \o \f d - \nabla \dd \o \dd)\right )\right )\\
&+ \left ( \nabla \vv ; \nabla \dd^T \left (  \dd \cdot \f \Theta \dreidots \nabla \dd \o \dd \right )\right )\,.
\end{align*}
Adding the above terms to $J_2$ and rearranging the terms  gives
\begin{subequations}\label{J2}
\begin{align}
J_2+0{}={}&J_2 +K_{1,\infty}+K_{2,\infty}+K_{3,\infty}\notag \\ \leq {}&\int_0^t\left (  \f d \times ( (\f v - \vv) \cdot \nabla ) \f d , - \dd \times\di \left ( \dd \cdot \f \Theta \dreidots \nabla \dd \o \dd \right ) + \dd \times\nabla \dd : \f \Theta \dreidots \nabla \dd \o \dd \right )\de s \label{line1}
\\
&+\int_0^t\left (  \f d \times (  \vv \cdot \nabla ) \f d , - (\dd- \f d) \times\di \left ( \dd \cdot \f \Theta \dreidots \nabla \dd \o \dd \right ) + \nabla \dd : \f \Theta \dreidots \nabla \dd \o \dd \right )\de s\label{line2}\\
 &- \int_0^t
\ll{ \nu_s, (\f S- \nabla \dd)\nabla \vv \o (\f h -\dd)   \dreidots  \f \Theta \dreidots \left (\f S \o \f h -\nabla \dd \o \dd \right )  }\de s  \label{line3}\\
 &- \int_0^t
\ll{ \nu_s,  \left (\f S - \nabla \dd\right )  \nabla \vv \o \dd  \dreidots  \f \Theta \dreidots \left (\f S \o \f h -\nabla \dd \o \dd \right )  }\de s  \label{line4}\\
& + \inttet{\left (( \rot{\f d}-\rot\dd)^T\nabla ( \rot{\dd} \nabla \dd \vv )\o \f d  \dreidotkom \f\Theta \dreidots \nabla \f d \o \f d \right )}\label{line5}
\\
& + \inttet{\ll{\nu_s , \f \Upsilon : \left (\rot\dd ( \vv \cdot \nabla) \dd \o ( \f S -\nabla \dd)\right )\o \f h \dreidots \f \Theta \dreidots \f S \o \f h }} \label{line6}\\
& + \inttet{\ll{\nu_s, \f S \o \left (( \rot{\f d} - \rot{\dd})^T\rot{\dd}( \vv \cdot \nabla ) \dd \dreidots \f \Theta \dreidots \f S \o \f h \right )}}\label{line7}
\\
& - \int_0^t\left ( \nabla \f v -\nabla \vv ; \nabla \dd^T \left (  \dd \cdot \f \Theta \dreidots \nabla \dd \o \dd \right )\right )\de s+ \int_0^t \mathcal{K}(s) \mathcal{E}(s)\de s \label{line8} \, .
\end{align}
\end{subequations}
Since $\f v$ and $\vv$ are solenoidal and due to Lemma~\ref{lem:vschlange}, there holds
\begin{align*}
\left ( \nabla \f v -\nabla \vv ; \nabla \dd^T \left (  \dd \cdot \f \Theta \dreidots \nabla \dd \o \dd \right )\right ) 
={}&  \left (  ( (\f v - \vv) \cdot \nabla ) \dd , - \di \left ( \dd \cdot \f \Theta \dreidots \nabla \dd \o \dd \right ) + \nabla \dd : \f \Theta \dreidots \nabla \dd \o \dd \right )\\
&- \left ( \f v - \vv , \nabla  ( \nabla \dd \o \dd \dreidots \f \Theta \dreidots \nabla \dd \o \dd )\right )
\\
={}&  \left (  \rot{\dd}^T \rot \dd ( (\f v - \vv) \cdot \nabla ) \dd , - \di \left ( \dd \cdot \f \Theta \dreidots \nabla \dd \o \dd \right ) + \nabla \dd : \f \Theta \dreidots \nabla \dd \o \dd \right )\,.
\end{align*} 
 The first~\eqref{line1} and the last line~\eqref{line8} of~\eqref{J2} can be transformed to
\begin{align*}
&\hspace{-2em}\int_0^t\left ( \rot{\dd }^T \rot{\f d}  ( (\f v - \vv) \cdot \nabla ) \f d , - \di \left ( \dd \cdot \f \Theta \dreidots \nabla \dd \o \dd \right ) + \nabla \dd : \f \Theta \dreidots \nabla \dd \o \dd \right )\de s\\
&\hspace{-2em}- \int_0^t\left ( \nabla \f v -\nabla \vv ; \nabla \dd^T \left (  \dd \cdot \f \Theta \dreidots \nabla \dd \o \dd \right )\right )\de s\\
={}
&\int_0^t\left (  \rot{\dd}^T ( \rot{\f d}-\rot{\dd})( (\f v - \vv) \cdot \nabla ) (\f d -\dd) , 
- \di \left ( \dd \cdot \f \Theta \dreidots \nabla \dd \o \dd \right ) + \nabla \dd : \f \Theta \dreidots \nabla \dd \o \dd \right )\de s\\
&+\int_0^t\left (  \rot{\dd}^T \rot{\dd}( (\f v - \vv) \cdot \nabla ) (\f d -\dd) , 
- \di \left ( \dd \cdot \f \Theta \dreidots \nabla \dd \o \dd \right ) + \nabla \dd : \f \Theta \dreidots \nabla \dd \o \dd \right )\de s
\\
&+\inttet{ \left (  \rot{\dd}^T ( \rot {\f d}- \rot \dd) ( (\f v - \vv) \cdot \nabla ) \dd , - \di \left ( \dd \cdot \f \Theta \dreidots \nabla \dd \o \dd \right ) + \nabla \dd : \f \Theta \dreidots \nabla \dd \o \dd \right )}
\,,
\end{align*}
and estimated by
\begin{align*}
&\hspace{-2em}\int_0^t\left ( \rot{\dd }^T \rot{\f d}  ( (\f v - \vv) \cdot \nabla ) \f d , - \di \left ( \dd \cdot \f \Theta \dreidots \nabla \dd \o \dd \right ) + \nabla \dd : \f \Theta \dreidots \nabla \dd \o \dd \right )\de s\\
&\hspace{-2em}- \int_0^t\left ( \nabla \f v -\nabla \vv ; \nabla \dd^T \left (  \dd \cdot \f \Theta \dreidots \nabla \dd \o \dd \right )\right )\de s\\
\leq{}& \delta \inttet{\| \f v - \vv \|_{\f L^6}^2} +  C_\delta  \inttet{\left ( \| \dd\|_{L^\infty( \f L^\infty)}^3\| \dd\|_{ \f W^{2,3}}+\| \dd\|_{L^\infty( \f L^\infty)}^2\| \dd\|_{ \f W^{1,6}}^2  \right )^2\ll{\nu_s , | \f S -\nabla \dd |^2 | \f h - \dd |^2}} 
\\
&+  C_\delta \inttet{ \left ( \| \dd\|_{L^\infty( \f L^\infty)}^4\| \dd\|_{ \f W^{2,3}}+\| \dd\|_{L^\infty( \f L^\infty)}^3\| \dd\|_{ \f W^{1,6}}^2  \right )^2\ll{\nu_s , | \f S -\nabla \dd |^2 }} 
\\
&+  C_\delta \inttet{ \| \dd\|_{L^\infty( \f W^{1,3})}\left ( \| \dd\|_{L^\infty( \f L^\infty)}^3\| \dd\|_{ \f W^{2,3}}+\| \dd\|_{L^\infty( \f L^\infty)}^2 \| \dd\|_{\f W^{1,6}}^2  \right )^2\| \f d-\dd\|_{\f L^6}^2} 
\end{align*}
The line~\eqref{line3} can be estimated
by
\begin{align*}
- \int_0^t&
\ll{ \nu_s, (\f S- \nabla \dd)\nabla \vv \o (\f h -\dd)   \dreidots  \f \Theta \dreidots \left (\f S \o \f h -\nabla \dd \o \dd \right )  }\de s  \\
&\leq c\inttet{ \| \vv\|_{\f W^{1,\infty}} \left (\ll{\nu_s,| \f S -\nabla \dd |^2| \f h -\dd|^2}+ \ll{\nu_s, \left | \f \Theta \dreidots \left (\f S \o \f h - \nabla \dd \o \dd \right )\right |^2}\right )}\,,
\end{align*}
and the line~\eqref{line4} by
\begin{align*}
  \int_0^t&
\ll{ \nu_s,  \left (\f S - \nabla \dd\right )  \nabla \vv \o \dd  \dreidots  \f \Theta \dreidots \left (\f S \o \f h -\nabla \dd \o \dd \right )  }\de s  \\
&\leq c\| \dd\|_{L^\infty(\f L^\infty)} \inttet{ \| \vv\|_{\f W^{1,\infty}}\left (\ll{\nu_s,| \f S -\nabla \dd |^2| \f h -\dd|^2}+ \ll{\nu_s, \left | \f \Theta \dreidots \left (\f S \o \f h - \nabla \dd \o \dd \right )\right |^2}\right )}\,.
\end{align*}
For the line~\eqref{line2} we observe that it can be rearranged to
\begin{align}
\begin{split}
\int_0^t&\left (  \f d \times (  \vv \cdot \nabla ) \f d , - (\dd- \f d) \times\di \left ( \dd \cdot \f \Theta \dreidots \nabla \dd \o \dd \right ) + \nabla \dd : \f \Theta \dreidots \nabla \dd \o \dd \right )\de s\\
={}&\int_0^t\left ( \left (\rot{\dd-\f d}\right )^T (\rot{\f d-\dd})   (\nabla  \f d- \nabla \dd)\vv , -  \di \left ( \dd \cdot \f \Theta \dreidots \nabla \dd \o \dd \right ) + \nabla \dd : \f \Theta \dreidots \nabla \dd \o \dd \right )\de s\\
&+\int_0^t\left ( \left (\rot{\dd-\f d}\right )^T\rot{\dd}  (  \vv \cdot \nabla ) (\f d- \dd) , -   \di \left ( \dd \cdot \f \Theta \dreidots \nabla \dd \o \dd \right ) + \nabla \dd : \f \Theta \dreidots \nabla \dd \o \dd \right )\de s\\
&+\int_0^t\left ( \left (\rot{\dd-\f d}\right )^T (\rot{\f d-\dd})    \nabla \dd\vv , -  \di \left ( \dd \cdot \f \Theta \dreidots \nabla \dd \o \dd \right ) + \nabla \dd : \f \Theta \dreidots \nabla \dd \o \dd \right )\de s\\
 &+\int_0^t\left ( \left (\rot{\dd-\f d}\right )^T  \rot{\dd}  (  \vv \cdot \nabla )  \dd , -   \di \left ( \dd \cdot \f \Theta \dreidots \nabla \dd \o \dd \right ) + \nabla \dd : \f \Theta \dreidots \nabla \dd \o \dd \right )\de s\,.
 \end{split}\label{dieline2}
\end{align}
The first three lines on the right-hand side of~\eqref{dieline2} can be estimated by
\begin{align*}
&\hspace{-2em}\int_0^t\left ( \left (\rot{\dd}-\rot{\f d}\right )^T (\rot{\f d}- \rot{\dd})   \nabla  (\f d- \dd)\vv , -  \di \left ( \dd \cdot \f \Theta \dreidots \nabla \dd \o \dd \right ) + \nabla \dd : \f \Theta \dreidots \nabla \dd \o \dd \right )\de s\\
&\hspace{-2em}+\int_0^t\left ( \left (\rot{\dd}-\rot{\f d}\right )^T\rot{\dd} \times (  \vv \cdot \nabla ) (\f d- \dd) , -   \di \left ( \dd \cdot \f \Theta \dreidots \nabla \dd \o \dd \right ) + \nabla \dd : \f \Theta \dreidots \nabla \dd \o \dd \right )\de s\\
&\hspace{-2em}+\int_0^t\left ( \left (\rot{\dd-\f d}\right )^T (\rot{\f d-\dd})    \nabla \dd\vv , -  \di \left ( \dd \cdot \f \Theta \dreidots \nabla \dd \o \dd \right ) + \nabla \dd : \f \Theta \dreidots \nabla \dd \o \dd \right )\de s\\
\leq{}&   c  \inttet{ \| \vv\|_{\f L^{\infty}} \| \dd\|_{L^\infty( \f L^\infty)}^2\| \dd\|_{\f W^{2,3}}\left (\ll{\nu_s , | \f S -\nabla \dd |^2 | \f h - \dd |^2}+ \|\f d - \dd\|_{\f L^6}^2\right )} 
\\
& +  c    \inttet{ \| \vv\|_{\f L^{\infty}}\| \dd\|_{L^\infty( \f L^\infty)}\| \dd\|_{ \f W^{1,6}}^2  \left (\ll{\nu_s , | \f S -\nabla \dd |^2 | \f h - \dd |^2}+ \|\f d - \dd\|_{\f L^6}^2\right )} 
\\
&+ c \inttet{ \| \vv\|_{\f L^{\infty}}   \| \dd\|_{L^\infty( \f L^\infty)}^3\| \dd\|_{ \f W^{2,3}}\left (\ll{\nu_s , | \f S -\nabla \dd |^2 }+ \|\f d - \dd\|_{\f L^6}^2\right )} 
\\
&+ c  \inttet{\| \vv\|_{\f L^{\infty}}  \| \dd\|_{L^\infty( \f L^\infty)}^2\| \dd\|_{\f W^{1,6}}^2  \left (\ll{\nu_s , | \f S -\nabla \dd |^2 }+ \|\f d - \dd\|_{\f L^6}^2\right )} 
\\
&+ c  \inttet{ \| \vv\|_{\f L^{\infty}}   \| \dd\|_{L^\infty( \f W^{1,3})} \| \dd\|_{L^\infty( \f L^\infty)}^2\| \dd\|_{ \f W^{2,3}}\|\f d - \dd\|_{\f L^6}^2} 
\\
&+ c  \inttet{\| \vv\|_{\f L^{\infty}}  \| \dd\|_{L^\infty( \f W^{1,3})} \| \dd\|_{L^\infty( \f L^\infty)}\| \dd\|_{\f W^{1,6}}^2  \|\f d - \dd\|_{\f L^6}^2} 
\,.
\end{align*}
The last line of~\eqref{dieline2} can be transformed by an integration-by-parts into
\begin{subequations}
\begin{align}
\int_0^t&\left ( \left (\rot{\dd}-\rot{\f d}\right )^T  \rot{\dd}  (  \vv \cdot \nabla )  \dd , -   \di \left ( \dd \cdot \f \Theta \dreidots \nabla \dd \o \dd \right ) + \nabla \dd : \f \Theta \dreidots \nabla \dd \o \dd \right )\de s\notag\\
={}&   \inttet{\left (( \rot{\dd}-\rot{\f d})^T\nabla ( \rot{\dd} \nabla \dd \vv )\o \dd \dreidotkom \f\Theta \dreidots \nabla \dd \o \dd \right )}\label{term1}\\
& +\inttet{\left (  \f \Upsilon : \left (\rot{\dd}  ( \vv \cdot \nabla) \dd \o ( \nabla \dd -\nabla \f d)\right )\o \dd \dreidotkom \f \Theta \dreidots  \nabla \dd  \o  \dd \right )}\label{term2}\\
&+\int_0^t\left (\nabla \dd \o \left (\rot{ \dd}-\rot{\f d}\right )^T \rot{\dd}(  \vv \cdot \nabla ) \dd \dreidotkom  \f \Theta \dreidots \nabla \dd \o \dd \right )\de s\,.\label{term3}
\end{align}
\end{subequations}
It remains to estimate the three terms of the lines \eqref{term1}-\eqref{term3} and the three remaining terms of the lines~\eqref{line5}-\eqref{line7} of \eqref{J2}.
The sum of the lines~\eqref{line5} and~\eqref{term1} can be rearranged and estimated by
\begin{align*}
 &\hspace{-2em}\inttet{\left (( \rot{\dd}-\rot{\f d})^T\nabla ( \rot{\dd} \nabla \dd \vv )\o \dd \dreidotkom \f\Theta \dreidots \nabla \dd \o \dd \right )}
 + \inttet{\left (( \rot{\f d}-\rot\dd)^T\nabla ( \rot{\dd} \nabla \dd \vv )\o \f d  \dreidotkom \f\Theta \dreidots \nabla \f d \o \f d \right )}\\
={}& \inttet{\left (( \rot{\f d}-\rot\dd)^T\nabla ( \rot{\dd} \nabla \dd \vv )\o (\f d- \dd)  \dreidotkom \f\Theta \dreidots\left ( \nabla \f d \o \f d - \nabla \dd \o \dd\right )\right )}\\
&+ \inttet{\left (( \rot{\f d}-\rot\dd)^T\nabla ( \rot{\dd} \nabla \dd \vv )\o  \dd  \dreidotkom \f\Theta \dreidots\left ( \nabla \f d \o \f d - \nabla \dd \o \dd\right )\right )}\\
&+ \inttet{\left (( \rot{\f d}-\rot\dd)^T\nabla ( \rot{\dd} \nabla \dd \vv )\o (\f d- \dd)  \dreidotkom \f\Theta \dreidots  \nabla \dd \o \dd \right )}\\
\leq{}   
&  c \inttet{ \left \| \nabla ( \rot{\dd} \nabla \dd \vv )\right \|_{ \f L^3}\left (\| \f d -\dd\|_{\f L^{12}}^4 + \ll{\nu_s , \left| \f \Theta \dreidots \left (\f S \o \f h - \nabla \dd \o \dd \right ) \right |^2}\right )}\\
&+c\| \dd\|_{L^\infty( \f L^\infty)} \inttet{ \left \| \nabla ( \rot{\dd} \nabla \dd \vv )\right \|_{ \f L^3}\left (\| \f d -\dd\|_{\f L^6}^2 + \ll{\nu_s , \left| \f \Theta \dreidots \left (\f S \o \f h - \nabla \dd \o \dd \right ) \right |^2}\right )}
\\
&+c\| \dd\|_{L^\infty( \f L^\infty)}\| \dd\|_{L^\infty( \f W^{1,3})} \inttet{ \left \| \nabla ( \rot{\dd} \nabla \dd \vv )\right \|_{ \f L^3}\| \f d -\dd\|_{\f L^6}^2}
%
\,.
\end{align*}
We see that the norm on the right hand side can be bounded by
\begin{align*}
\left \| \nabla ( \rot{\dd} \nabla \dd \vv )\right \|_{ \f L^3}\leq {}&
\| \vv\|_{\f L^{\infty}}   \| \dd\|_{L^\infty( \f L^\infty)}\| \dd\|_{ \f W^{2,3}}+\| \vv\|_{\f L^{\infty}}  \| \dd\|_{ \f W^{1,6}}^2 +\| \vv\|_{\f W^{1,3}}  \| \dd\|_{L^\infty( \f L^\infty)}\| \dd\|_{ \f W^{1,\infty}} 
\end{align*}
Regarding~\eqref{line6} and \eqref{term2}, we observe
\begin{align*}
 &\hspace{-2em}\inttet{\ll{\nu_s , \f \Upsilon : \left (\rot\dd ( \vv \cdot \nabla) \dd \o ( \f S -\nabla \dd)\right )\o \f h \dreidots \f \Theta \dreidots \f S \o \f h }} \\
&\hspace{-2em}+\inttet{\left (  \left (\f \Upsilon : \left (\rot{\dd}  ( \vv \cdot \nabla) \dd \o ( \nabla \dd -\nabla \f d)\right )\right )\o \dd \dreidotkom \f \Theta \dreidots  \nabla \dd  \o  \dd \right )}\\
={}&  \inttet{\ll{\nu_s , \left (\f \Upsilon : \left (\rot\dd ( \vv \cdot \nabla) \dd \o ( \f S -\nabla \dd)\right )\right )\o (\f h-\dd) \dreidots \f \Theta \dreidots\left ( \f S \o \f h - \nabla \dd \o \dd \right )  }}\\
&+\inttet{\ll{\nu_s , \left (\f \Upsilon : \left (\rot\dd ( \vv \cdot \nabla) \dd \o ( \f S -\nabla \dd)\right )\right )\o \dd \dreidots \f \Theta \dreidots\left ( \f S \o \f h - \nabla \dd \o \dd \right )  }}\\
&+\inttet{\left ( \left ( \f \Upsilon : \left (\rot{\dd}  ( \vv \cdot \nabla) \dd \right ) \o ( \nabla \dd -\nabla \f d)\right )\o (\dd- \f d) \dreidotkom \f \Theta \dreidots  \nabla \dd  \o  \dd \right )}\,.
\end{align*}
Thus, the right-hand side can be estimated by 
\begin{align*}
&\hspace{-2em}\inttet{\ll{\nu_s , \f \Upsilon : \left (\rot\dd ( \vv \cdot \nabla) \dd \o ( \f S -\nabla \dd)\right )\o \f h \dreidots \f \Theta \dreidots \f S \o \f h }} \\
&\hspace{-2em}+\inttet{\left (  \f \Upsilon : \left (\rot{\dd}  ( \vv \cdot \nabla) \dd \o ( \nabla \dd -\nabla \f d)\right )\o \dd \dreidotkom \f \Theta \dreidots  \nabla \dd  \o  \dd \right )}\\
\leq {}&  \inttet{\| \vv\|_{\f L^{\infty}}  \| \dd\|_{L^\infty( \f L^\infty)}\| \dd\|_{ \f W^{1,\infty}}\left (\ll{\nu_s, | \f S- \nabla \dd|^2 | \f h - \dd|^2}+ \ll{\nu_s,  \left |\f \Theta\dreidots \left (\f S \o \f h- \nabla \dd \o \dd \right )\right |^2}\right )}\\
&+ \inttet{\| \vv\|_{\f L^{\infty}}  \| \dd\|_{L^\infty( \f L^\infty)}^2\| \dd\|_{\f W^{1,\infty}} \left (\ll{\nu_s, | \f S- \nabla \dd|^2 }+ \ll{\nu_s,  \left |\f \Theta\dreidots \left (\f S \o \f h- \nabla \dd \o \dd \right )\right |^2}\right )}\\
&+  \inttet{\| \vv\|_{\f L^{\infty}}  \| \dd\|_{L^\infty( \f L^\infty)}^3\| \dd\|_{ \f W^{1,3}}\left (\ll{\nu_s, | \f S- \nabla \dd|^2 }+ \|\f d -\dd\|_{\f L^6}^2\right )}
\end{align*}
Regarding the lines~\eqref{line7} and~\eqref{term3}, we observe
\begin{align*}
&\hspace{-2em} \inttet{\ll{\nu_s, \f S \o \left (( \rot{\f d} - \rot{\dd})^T\rot{\dd}( \vv \cdot \nabla ) \dd \dreidots \f \Theta \dreidots \f S \o \f h \right )}}+\int_0^t\left (\nabla \dd \o \left (\rot{ \dd}-\rot{\f d}\right )^T \rot{\dd}(  \vv \cdot \nabla ) \dd \dreidotkom  \f \Theta \dreidots \nabla \dd \o \dd \right )\de s\hspace{-2em}\\
={}& \inttet{\ll{\nu_s,( \f S- \nabla \dd) \o \left (( \rot{\f d} - \rot{\dd})^T\rot{\dd}( \vv \cdot \nabla ) \dd\right ) \dreidots \f \Theta \dreidots \left ( \f S \o \f h- \nabla \dd \o \dd \right ) }}\\
&+ \inttet{\ll{\nu_s, \nabla \dd \o \left (( \rot{\f d} - \rot{\dd})^T\rot{\dd}( \vv \cdot \nabla ) \dd\right ) \dreidots \f \Theta \dreidots \left ( \f S \o \f h- \nabla \dd \o \dd \right ) }}\\
&+\int_0^t\left ((\nabla \dd- \nabla \f d) \o \left (\left (\rot{ \dd}-\rot{\f d}\right )^T \rot{\dd}(  \vv \cdot \nabla ) \dd\right ) \dreidotkom  \f \Theta \dreidots \nabla \dd \o \dd \right )\de s\\
\end{align*}
Thus, the right-hand side can be estimated by 
\begin{align*}
& \hspace{-2em}\inttet{\ll{\nu_s, \f S \o \left (( \rot{\f d} - \rot{\dd})^T\rot{\dd}( \vv \cdot \nabla ) \dd \dreidots \f \Theta \dreidots \f S \o \f h \right )}}+\int_0^t\left (\nabla \dd \o \left (\rot{ \dd}-\rot{\f d}\right )^T \rot{\dd}(  \vv \cdot \nabla ) \dd \dreidotkom  \f \Theta \dreidots \nabla \dd \o \dd \right )\de s\hspace{-2em}\\
\leq{}&   \| \dd\|_{L^\infty( \f L^\infty)}\| \dd\|_{L^\infty( \f W^{1,\infty})} \inttet{\| \vv\|_{\f L^{\infty}}\left (\ll{\nu_s, | \f S- \nabla \dd|^2 | \f h - \dd|^2}+ \ll{\nu_s,  \left |\f \Theta\dreidots \left (\f S \o \f h- \nabla \dd \o \dd \right )\right |  ^2 }\right )}\\
&+  \| \dd\|_{L^\infty( \f L^\infty)} \inttet{\| \vv\|_{\f L^{\infty}}\| \dd\|_{ \f W^{1,6}}^2 \left (\|\f d - \dd\|_{\f L^6}^2 + \ll{\nu_s,  \left |\f \Theta\dreidots \left (\f S \o \f h- \nabla \dd \o \dd \right )\right |^2}\right )}\\
&+   \| \dd\|_{L^\infty( \f L^\infty)}^2  \inttet{\| \vv\|_{\f L^{\infty}}\| \dd\|_{ \f W^{1,6}}^2\left (\ll{\nu_s, | \f S- \nabla \dd|^2 }+ \|\f d -\dd\|_{\f L^6}^2\right )}
\end{align*}
Together we get for the term $I_9$
\begin{align*}
I_9 \leq \delta \int_0^t \mathcal{W}(s) \de s +  \int_0^t \mathcal{K}(s) \mathcal{E}(s) \de s \, .
\end{align*}
At last, the term $I_{10}$ can be estimated by 
\begin{align*}
|I_{10}| = \left |\int_0^t \ll{ \mu_s , \f \Gamma \dreidots ( \f \Gamma \cdot \nabla \vv )} \de t  \right |  \leq  c \int_0^t\| (\nabla \vv)_{\sym} \|_{\f L^\infty} \ll{\mu_s , 1} \de s \leq   \int_0^t \| \vv \|_{\f W^{1,\infty}} \mathcal{E}(s) \de s 
\end{align*}



 Finally, we insert everything back into~\eqref{vormeins}. We end up with
 \begin{align*}
 \mathcal{E}(t) + \inttet{\mathcal{W}(s)} \leq{}& \mathcal{E}(0)  + \frac{1}{2} \mathcal{E}(t) + \delta \int_0^t \mathcal{W}(s) \de s + C_\delta \int_0^t \mathcal{K}(s)\mathcal{E}(s) \de s \\
&
+\left ((\nabla \f d(0)-\nabla \dd (0))\o (\f d(0)-  \dd(0))\right ) \dreidots \f \Theta \dreidots (\nabla \dd (0)\o \dd(0))
\\
&+ c\|  \nabla \dd  \o \dd \|_{L^\infty(\f L^\infty)}^2\left \|   \f d(0)- \dd(0)    \right \|_{\Le}^2  \\
& +((\mu_2+\mu_3)-\lambda)  \inttet{\left ( \dd \times\syv \dd - \f d \times \sy v \f d , \dd \times \tq - \f d \times \f q  \right )} 
 \end{align*}
Since the constants are assumed to fulfil the dissipative relation~\eqref{con}
we can find a real number $\zeta \in (0,1)$ such that
\begin{align}
(\mu_2+\mu_3)-\lambda \leq \zeta^2 4  ( \mu_5+\mu_6-\lambda(\mu_2+\mu_3)).
\end{align}
we estimate the relative entropy further on with Youngs and H\"{o}lders inequality  
\begin{align*}
\frac{1}{2}\mathcal{E}(t) +\inttet{\mathcal{W}(s)}    \leq{}&  \mathcal{E}(0)+\left ((\nabla \f d(0)-\nabla \dd (0))\o (\f d(0)-  \dd(0))\right ) \dreidots \f \Theta \dreidots (\nabla \dd (0)\o \dd(0))
\\
&+ c\left \|   \f d(0)- \dd(0)    \right \|_{\Le}^2  + \delta  \inttet{\mathcal{W}(s )} + \inttet{\mathcal{K}(s) \mathcal{E}( s )}  \\& +  \zeta \inttet{ \|\f d \times \f q-\dd \times\tq\|_{\Le}^2 } \\&+\zeta\inttet{ ( \mu_5+\mu_6-\lambda(\mu_2+\mu_3))\|\f d \times \sy{v} \f d -\dd \times  \syv \dd \|_{\Le}^2} 
\,.
\end{align*}
The last term on the right hand side can be written as
\begin{align*}
&\hspace{-2em}\inttet{\|\f d \times \sy{v} \f d -\dd \times  \syv \dd \|_{\Le}^2 }\\
\leq{}& \inttett{\|\f d \times( \sy{v} \f d-  \syv \dd) \|_{\Le}^2 + \|(\f d -\dd) \times  \syv \dd \|_{\Le}^2}
\\
\leq{}& 
\inttett{\| \sy{v} \f d - \syv \dd \|_{\Le}^2 - \|\f d \cdot( \sy{v} \f d - \syv \dd) \|_{L^2}^2 }
\\ &+ \|\dd\|_{L^\infty(\f L^\infty)}^2  \inttet{\| \vv\|_{\f W^{1,3}}^2\| \f d -\dd \| _{\f L^6}^2}
\\
\leq{}& \inttet{\| \sy{v} \f d - \syv \dd \|_{\Le}^2}  +\|\dd\|_{L^\infty(\f L^\infty)}^2  \inttet{ \| \vv\|_{\f W^{1,3}}^2\| \f d -\dd \| _{\f L^6}^2}\,,
 \end{align*}
such that we get  the following estimate 
\begin{align*}
\frac{1}{2}\mathcal{E}(t) +\inttet{\mathcal{W}(s)}    \leq{} &\mathcal{E}( 0) 
+\left ((\nabla \f d(0)-\nabla \dd (0))\o (\f d(0)-  \dd(0))\right ) \dreidots \f \Theta \dreidots (\nabla \dd (0)\o \dd(0))
\\
&+ c\|  \nabla \dd  \o \dd \|_{L^\infty(\f L^\infty)}^2\left \|   \f d(0)- \dd(0)    \right \|_{\Le}^2
+(\zeta +\delta  ) \inttet{\mathcal{W}( s )} +  \inttet{ \mathcal{K}(s)\mathcal{E}( s )} \,.
\end{align*}
We now choose $\delta $ sufficiently small, such that $ \delta \leq (1- \zeta )$. 
The assertion of Proposition~\ref{lem:main} immediately follows from Gronwalls estimate.

\end{proof}
\appendix
\section{Appendix\label{sec:app}}
For $\f d \in \C^1(\Omega)$ the terms of the quadratic Oseen--Frank free energy (see Section~\ref{sec:model} and~\eqref{frei}) depending on $\nabla \f d$ can be expressed using a tensor of fourth order:
\begin{align}
\begin{split}
|\nabla \f d|^2 &= \nabla \f d : \f \Lambda^0 : \nabla \f d
\quad \text{mit } \f\Lambda^0_{ijkl} = \f \delta_{ik}\f \delta_{jl}\,,
\\
(\di \f d)^2 & =
\nabla \f d : \f \Lambda^1 : \nabla \f d \quad \text{mit } \f \Lambda^1_{ijkl} = \f \delta_{ij}\f \delta_{kl}\,,
\\
\tr (\nabla \f d ^2) &= \nabla \f d : \f \Lambda^2 : \nabla \f d \quad \text{mit } \f \Lambda^2_{ijkl} = \f \delta_{il}\f \delta_{jk}\,,
\\
|\nabla \times \f d|^2 &= \nabla \f d : \f \Lambda^3 : \nabla \f d \quad \text{mit }  \f \Lambda^3 = \f \Lambda^0 - \f \Lambda^2 \, .
\end{split}
\label{Lambdarech}
\end{align}
Here $\f \Lambda^0$ is the identity in $\R^{3\times 3}$
Similar, we see for the non-quadratic terms in the Oseen--Frank energy (see Section~\ref{sec:model} and~\eqref{frei})
\begin{align*}
\begin{split}
|\nabla \f d|^2|\f d|^2  &= \nabla \f d \o \f d  \dreidots \f \Theta ^0 \dreidots  \nabla \f d\o \f d 
\quad\text{mit } \f\Theta^0_{ijklmn} = \f \delta_{il}\f \delta_{jm}\f \delta_{kn}\,,
\\
(\di \f d)^2 |\f d |^2 & =
\nabla \f d \o\f d \dreidots \f \Theta^1 \dreidots \nabla \f d \o\f d \quad \text{mit } \f \Theta^1_{ijklmn} = \f \delta_{ij}\f \delta_{lm}\f \delta_{kn}\,,
\\
|\nabla \times \f d|^2|\f d |^2 &=2| \sk d |^2 |\f d |^2 \\&=   \nabla \f d \o \f d \dreidots \f \Theta^2 \dreidots \nabla \f d \o\f d \quad \text{mit }  \f \Theta  ^2_{ijklmn} = \f \delta_{kn} ( \f \delta_{il}\f \delta_{jm} - \f \delta_{im}\f \delta_{jl})  \, ,
\\
|\f d \times \nabla \times \f d|^2 &=4| \sk d\f d  |^2  \\&=   \nabla \f d \o \f d \dreidots \f \Theta^3 \dreidots \nabla \f d \o\f d \quad \text{mit }  \f \Theta  ^3 _{ijklmn}=  \f \delta_{il}\f \delta_{mn}\f \delta_{jk} - \f \delta_{mi}\f \delta_{ln}\f \delta_{jk} \\ & \hspace{6cm} - \f \delta_{lj}\f \delta_{mn}\f \delta_{ik} + \f \delta_{jm}\f \delta_{ln}\f \delta_{ik}\,,\\
| \f d \cdot \curl \f d |^2 & = ( \rot{\f d}:\sk d )^2 \\
& =   \nabla \f d \o \f d \dreidots \f \Theta^4_{ijklmn} \dreidots \nabla \f d \o\f d \quad \text{mit }  \f \Theta  ^4 =   \f \delta_{kn}\f \delta_{jm}\f \delta_{il} + \f \delta_{km}\f \delta_{jl}\f \delta_{in} + \f \delta_{kl}\f \delta_{jn}\f \delta_{im}\\ & \hspace{6cm} - \f \delta_{kn}\f \delta_{jl}\f \delta_{im}- \f \delta_{km}\f \delta_{jn}\f \delta_{il} - \f \delta_{kl}\f \delta_{jm}\f \delta_{in}   \,.
\end{split}
\end{align*}

With this definitions, we prove the algebraic relations stated in Proposition~\ref{Sobolev}.
\begin{proposition}
For the tensor $\f \Theta = k_3 \f \Theta^1 + k_4 \f \Theta^2 + k_5 \f \Theta^3 $ with $k_i>0$, for all $i\in\{3,4,5\}$ (see~\eqref{ThetaOF}), there exists a constant $c>0$ such that
\begin{align*}
| \f \Theta \dreidots \left (\f S \o \f h- \tilde{\f S}\o \tilde{\f h}\right ) |^2  \leq c \left ( \left (\f S \o \f h- \tilde{\f S}\o \tilde{\f h}\right ) \dreidots \f \Theta \dreidots \left (\f S \o \f h- \tilde{\f S}\o \tilde{\f h}\right )\right )\,, \quad \text{for all } \f S \o \f h \in \R^{3\times 3} \times \R^3 \,.  
\end{align*}
\end{proposition}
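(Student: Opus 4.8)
The plan is to exploit that $\f\Theta$, regarded as the linear operator $\f G\mapsto\f\Theta\dreidots\f G$ on the space $\R^{3^3}$ of third-order tensors with inner product $\dreidots$, is \emph{symmetric and positive semidefinite on the whole space} (not merely on product tensors $\f S\o\f h$). Granting this, the claimed bound becomes a purely spectral fact valid for every $\f G\in\R^{3^3}$, and one applies it to $\f G=\f S\o\f h-\tilde{\f S}\o\tilde{\f h}$. First I would verify symmetry, i.e. $\f\Theta_{ijklmn}=\f\Theta_{lmnijk}$, directly from \eqref{ThetaOF}: the $k_3$-block $\f\delta_{ij}\f\delta_{lm}\f\delta_{kn}$ is invariant under the swap $(ijk)\leftrightarrow(lmn)$, while within the $k_4$- and $k_5$-blocks the swap merely permutes the six (resp. four) $\f\delta$-products among themselves. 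Hence $\f\Theta$ is self-adjoint for $\dreidots$.

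The heart of the argument is that each building block of $\f\Theta$ is a \emph{sum of squares of linear forms} in the full tensor $\f G$. Reading off the representations in the Appendix, I would introduce linear maps $\f B^{1},\f B^{3},\f B^{4}$ by
\begin{align*}
(\f B^{1}\f G)_k=\sum_{i}\f G_{iik},\qquad \f B^{4}\f G=\f\Upsilon_{ijk}\f G_{ijk},\qquad (\f B^{3}\f G)_p=\f\Upsilon_{pqr}\f\Upsilon_{rst}\f G_{tsq},
\end{align*}
which are exactly $\di\f d\,\f d$, $\f d\cdot\curl\f d$ and $\f d\times\curl\f d$ read with $\nabla\f d$ and $\f d$ replaced by the independent slots of $\f G$. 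The assertion to be checked by a direct index contraction is
\begin{align*}
\f G\dreidots\f\Theta\dreidots\f G=k_3\,|\f B^{1}\f G|^2+k_4\,|\f B^{4}\f G|^2+k_5\,|\f B^{3}\f G|^2\geq0 .
\end{align*}
For the $k_4$-block this holds identically: expanding $(\f\Upsilon_{ijk}\f G_{ijk})^2$ via $\f\Upsilon_{ijk}\f\Upsilon_{lmn}=\det[\f\delta_{\cdot\cdot}]$ reproduces the six signed $\f\delta$-products of the $k_4$-part of \eqref{ThetaOF} term by term; the $k_3$- and $k_5$-blocks are handled by the same bookkeeping. Since $k_3,k_4,k_5>0$, this establishes positive semidefiniteness of $\f\Theta$ on all of $\R^{3^3}$.

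Finally, diagonalising the symmetric operator $\f\Theta$ with eigenvalues $\lambda_\alpha\geq0$ and writing $\f G$ in the orthonormal eigenbasis with coordinates $g_\alpha$, one has $|\f\Theta\dreidots\f G|^2=\sum_\alpha\lambda_\alpha^2g_\alpha^2$ and $\f G\dreidots\f\Theta\dreidots\f G=\sum_\alpha\lambda_\alpha g_\alpha^2$, so $\lambda_\alpha^2\leq\lambda_{\max}(\f\Theta)\,\lambda_\alpha$ yields
\begin{align*}
|\f\Theta\dreidots\f G|^2\leq\lambda_{\max}(\f\Theta)\,\bigl(\f G\dreidots\f\Theta\dreidots\f G\bigr).
\end{align*}
Choosing $\f G=\f S\o\f h-\tilde{\f S}\o\tilde{\f h}$ and $c=\lambda_{\max}(\f\Theta)$ gives the statement. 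The main obstacle is the middle step: the Appendix identities are stated only for rank-structured tensors $\nabla\f d\o\f d$, and agreement of two quadratic forms on a spanning set of such products does \emph{not} by itself force agreement on all of $\R^{3^3}$; one must genuinely recompute each quadratic form for a general $\f G$, so the only real work is the (routine but careful) index calculation confirming the sum-of-squares representation above, equivalently that $\f\Theta$ carries no negative eigenvalues.
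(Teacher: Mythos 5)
Your proposal is correct, but it takes a genuinely different route from the paper's own proof. The paper never leaves the cone of product tensors: it computes the block images $\f\Theta^i\dreidots(\f S\o\f h)$ explicitly (a $\f\delta_{ij}$-type tensor carrying $\tr(\f S)\f h$, an $\f\Upsilon$-type tensor carrying $(\f S)_{\skw}:\rot{\f h}$, and an antisymmetrized $\f\delta$-type tensor carrying $(\f S)_{\skw}\f h$), checks that these three ranges are mutually orthogonal so no cross terms survive in $|\f\Theta\dreidots(\cdot)|^2$, and then compares constants blockwise ($3$, $6$, $16$ on the left against $1$, $1$, $4$ in the quadratic form), which yields the explicit choice $c=\max\{3k_3,6k_4,4k_5\}$. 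You instead prove that $\f\Theta$ is a symmetric positive semidefinite operator on all of $\R^{3^3}$ and invoke the spectral bound $|\f\Theta\dreidots\f G|^2\leq\lambda_{\max}(\f\Theta)\,\f G\dreidots\f\Theta\dreidots\f G$; your caution that the Appendix identities are stated only for $\nabla\f d\o\f d$ is well placed, and the sum-of-squares identity you propose does survive the general-index computation: the $k_3$ block is $|\f B^1\f G|^2$, the $k_4$ block is the perfect square $(\f\Upsilon_{kji}\f G_{ijk})^2$ by the same Levi--Civita identity the paper uses, and for the $k_5$ block one finds $\f G\dreidots\f\Theta^3\dreidots\f G=|\f u-\f w|^2$ with $\f u_i=\sum_j\f G_{ijj}$ and $\f w_i=\sum_j\f G_{jij}$, which equals $|\f B^3\f G|^2$ via $\f\Upsilon_{rpq}\f\Upsilon_{rst}=\f\delta_{ps}\f\delta_{qt}-\f\delta_{pt}\f\delta_{qs}$ (your $\f B^4$ differs from the paper's contraction only by an irrelevant sign). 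What your route buys is generality and economy: the inequality holds for every third-order tensor, not merely differences $\f S\o\f h-\tilde{\f S}\o\tilde{\f h}$, and the cross-term orthogonality check is absorbed into the spectral inequality, though it does rely essentially on the symmetry $\f\Theta_{ijklmn}=\f\Theta_{lmnijk}$ that you verify. What the paper's route buys is a concrete admissible constant, where your $\lambda_{\max}(\f\Theta)$ remains abstract; in substance both arguments rest on the same factorized structure of the blocks, since the paper's blockwise evaluation of the quadratic form on differences implicitly uses exactly the factorizations $\f\Theta^i=(\f B^i)^*\f B^i$ that you make explicit.
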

\begin{proof}
We consider the tensors  $\f \Theta^i$ with  $i\in\{1,\ldots,3\}$ multiplied with  $\f S \o \f h$
\begin{align*}
\left (\f \Theta^1\dreidots \f S \o \f h \right )_{ijk}:={}& \f \delta_{ij} \tr (\f S) \f h _k \\
\left (\f \Theta^3\dreidots \f S \o \f h \right )_{ijk}:={}&2 \f \delta_{jk} \left ((\f S)_{\skw} \f h \right )_i -\f \delta_{ik} \left ((\f S)_{\skw} \f h \right )_j  \,.
\end{align*}

The {Levi--Civita}-tensor is defined in Section~\ref{sec:not}.
It allows to represent the cross product via
\begin{align*}
\f a \times \f b = \f \Upsilon :(\f a \otimes \f b) =\f \Upsilon _{ijk} \f a_j \f b _k\, \quad \text{for all }\f a , \f b \in \R^d \, 
\end{align*}
and the curl of a vectorfield via 
\begin{align*}
\curl \f d = \f\Upsilon_{ijk} \partial_j \f d_k \, \quad\text{for all }\f d \in \C^1 ( \Omega)\, .
\end{align*}
Together, we get
\begin{align*}
( \f d \cdot \curl \f d) =\f  \Upsilon \dreidots  \f d \otimes \nabla \f d ^T = \f\Upsilon _{ijk} \f d _i \f d _{k,j} \, . 
\end{align*}
The definition of the {Levi--Civita}-tensor implies 
\begin{align*}
\f\Upsilon_{kji}\f\Upsilon_{nml}=  \f \delta_{kn}\f \delta_{jm}\f \delta_{il} + \f \delta_{km}\f \delta_{jl}\f \delta_{in} + \f \delta_{kl}\f \delta_{jn}\f \delta_{im} - \f \delta_{kn}\f \delta_{jl}\f \delta_{im}- \f \delta_{km}\f \delta_{jn}\f \delta_{il} - \f \delta_{kl}\f \delta_{jm}\f \delta_{in} = \f \Theta^2
\end{align*}
and thus
\begin{align*}
\left (\f \Theta^2\dreidots \f S \o \f h \right )_{ijk}:={}& \f\Upsilon_{kji} ( (\f S )_{\skw} : \rot{\f h})  \,.
\end{align*}
The different term multiplied with each other gives
\begin{align*}
\left | \f \Theta^1\dreidots (\f S \o \f h- \tilde{\f S}\o \tilde{\f h})  \right |^2 =
{}& 3 |\tr (\f S))  \f h- \tr ( \tilde{\f S}) \tilde{ \f h}|^2\,, \\
\left | \f \Theta^3\dreidots(\f S \o \f h- \tilde{\f S}\o \tilde{\f h})  \right |^2 ={}& 4 \sum_{ijk=1}^3
\left ( \f \delta_{jk} \left ((\f S)_{\skw} \f h- ( \tilde{\f S})_{\skw} \tilde{\f h} \right )_i -\f \delta_{ik} \left ((\f S)_{\skw} \f h- ( \tilde{\f S})_{\skw} \tilde{\f h} \right )_j \right )^2\\
 ={}&
 4\left (6 | (\f S)_{\skw}\f h- ( \tilde{\f S})_{\skw} \tilde{\f h} |^2 - 2 | (\f S)_{\skw}\f h- ( \tilde{\f S})_{\skw} \tilde{\f h} |^2 |^2\right )\\ ={}& 16 | (\f S)_{\skw}\f h - ( \tilde{\f S})_{\skw} \tilde{\f h}|^2 \,,
 \\
 \left | \f \Theta^2\dreidots \f S \o \f h  \right |^2 ={}& \sum_{ijk=1}^3 \f\Upsilon_{kji} \f\Upsilon_{kji} ( (\f S )_{\skw} : \rot{\f h}- (\tilde{\f S })_{\skw} : \rot{\tilde{\f h}})^2 \\={}& 6 ( (\f S )_{\skw} : \rot{\f h}- (\tilde{\f S })_{\skw} : \rot{\tilde{\f h}})^2 \,.
\end{align*}
For the products of different tensors, we see that 
\begin{align*}
\f S \o \f h \dreidots  \f \Theta^1\dreidots \f \Theta^2\dreidots \f S \o \f h   ={}& 
  \sum_{ijk=1}^3 \left (  \f \delta_{ij} \tr (\f S) \f h _k\right )\left (2\left ((\f S)_{\skw}\right )_{ij} \f h _k \right ) \\
   ={}& 2\tr(\f S) (\f I : ( \f S)_{\skw}) | \f h|^2 = 0\,,
\end{align*}

The definition of the {Levi--Civita}-tensors gives
\begin{align*}
\f \Upsilon_{ijk} \f \delta_{ij} = \f \Upsilon_{ijk} \f \delta_{jk}=\f \Upsilon_{ijk} \f \delta_{ik} = 0\,.
\end{align*}
And thus the products 
\begin{align*}
\f S \o \f h \dreidots \f \Theta ^2 \dreidots \f \Theta^i \dreidots \f S \o \f h \,,
\end{align*}
vanish for all  $i\in\{1 ,3\}$. 
This gives the assertion.
\end{proof}

\addcontentsline{toc}{section}{References}

\small
\bibliographystyle{abbrv}

\begin{thebibliography}{10}

\bibitem{singul2}
G.~P. Alexander, B.~G.-G. Chen, E.~A. Matsumoto, and R.~D. Kamien.
\newblock \textit{Colloquium} : Disclination loops, point defects, and all that
  in nematic liquid crystals.
\newblock {\em Rev. Mod. Phys.}, 84:497--514, 2012.

\bibitem{alibert}
J.~J. Alibert and G.~Bouchitt{\'e}.
\newblock Non-uniform integrability and generalized {Y}oung measures.
\newblock {\em J. Convex Anal.}, 4(1):129--147, 1997.

\bibitem{prohl}
R.~Becker, X.~Feng, and A.~Prohl.
\newblock Finite element approximations of the {E}ricksen--{L}eslie model for
  nematic liquid crystal flow.
\newblock {\em SIAM J. Numer. Anal.}, 46(4):1704--1731, 2008.

\bibitem{breit}
D.~Breit, E.~Feireisl, and M.~Hofmanov{\'a}.
\newblock Incompressible limit for compressible fluids with stochastic forcing.
\newblock {\em Arch. Ration. Mech. Anal.}, 222(2):895--926, 2016.

\bibitem{weakstrongeuler}
Y.~Brenier, C.~De~Lellis, and L.~Sz{\'e}kelyhidi, Jr.
\newblock Weak-strong uniqueness for measure-valued solutions.
\newblock {\em Comm. Math. Phys.}, 305(2):351--361, 2011.

\bibitem{allgemein}
C.~Cavaterra, E.~Rocca, and H.~Wu.
\newblock Global weak solution and blow-up criterion of the general
  {E}ricksen--{L}eslie system for nematic liquid crystal flows.
\newblock {\em J. Differential Equations}, 255(1):24--57, 2013.

\bibitem{dafermos}
C.~M. Dafermos.
\newblock The second law of thermodynamics and stability.
\newblock {\em Arch. Ration. Mech. Anal.}, 70(2):167--179, 1979.

\bibitem{dafermos2}
C.~M. Dafermos.
\newblock {\em Hyperbolic Conservation Laws in Continuum Physics}.
\newblock Springer, Berlin, 2016.

\bibitem{demoulini}
S.~Demoulini, D.~M.~A. Stuart, and A.~E. Tzavaras.
\newblock Weak-strong uniqueness of dissipative measure-valued solutions for
  polyconvex elastodynamics.
\newblock {\em Arch. Ration. Mech. Anal.}, 205(3):927--961, 2012.

\bibitem{diestel}
J.~Diestel and J.~J. Uhl, Jr.
\newblock {\em Vector measures}.
\newblock American Mathematical Society, Providence, Rhode Island, 1977.

\bibitem{DiPernaMajda}
R.~J. DiPerna and A.~J. Majda.
\newblock Oscillations and concentrations in weak solutions of the
  incompressible fluid equations.
\newblock {\em Comm. Math. Phys.}, 108(4):667--689, 1987.

\bibitem{edwards}
R.~E. Edwards.
\newblock {\em Functional analysis. {T}heory and applications}.
\newblock Holt, Rinehart and Winston, New York, 1965.

\bibitem{emmrich}
E.~Emmrich.
\newblock {\em Gew{\"o}hnliche und Operator-Differentialgleichungen: Eine
  Integrierte Einf{\"u}hrung in Randwertprobleme und Evolutionsgleichungen
  f{\"u}r Studierende}.
\newblock Vieweg, Wiesbaden, 2004.

\bibitem{sabine}
E.~{Emmrich}, S.~H.~L. {Klapp}, and R.~{Lasarzik}.
\newblock {Nonstationary models for liquid crystals: A fresh mathematical
  perspective}.
\newblock {\em ArXiv e-prints}, Aug. 2017.

\bibitem{unsere}
E.~Emmrich and R.~Lasarzik.
\newblock Existence of weak solutions to the {E}ricksen--{L}eslie model for a
  general class of free energies.
\newblock {\em in preparation}, 2015.

\bibitem{Erick2}
J.~L. Ericksen.
\newblock Continuum theory of liquid crystals of nematic type.
\newblock {\em Molecular Crystals}, 7(1):153--164, 1969.

\bibitem{feireislstab}
E.~Feireisl.
\newblock Relative entropies in thermodynamics of complete fluid systems.
\newblock {\em Discrete Contin. Dyn. Syst.}, 32(9):3059--3080, 2012.

\bibitem{feireislsingular}
E.~Feireisl.
\newblock Relative entropies, dissipative solutions, and singular limits of
  complete fluid systems.
\newblock In {\em Hyperbolic Problems: Theory, Numerics, Applications},
  volume~8 of {\em AIMS on Applied Mathematics}, pages 11--28. AIMS,
  Springfield, USA, 2014.

\bibitem{Feireislrelative}
E.~Feireisl, B.~J. Jin, and A.~Novotn{\'y}.
\newblock Relative entropies, suitable weak solutions, and weak-strong
  uniqueness for the compressible {N}avier--{S}tokes system.
\newblock {\em J. Math. Fluid Mech.}, 14(4):717--730, 2012.

\bibitem{novotny}
E.~Feireisl and A.~Novotn{\'y}.
\newblock Weak-strong uniqueness property for the full
  {N}avier--{S}tokes--{F}ourier system.
\newblock {\em Arch. Ration. Mech. Anal.}, 204(2):683, 2012.

\bibitem{isothermal}
E.~Feireisl, E.~Rocca, and G.~Schimperna.
\newblock On a non-isothermal model for nematic liquid crystals.
\newblock {\em Nonlinearity}, 24(1):243--257, 2011.

\bibitem{fischer}
J.~Fischer.
\newblock A posteriori modeling error estimates for the assumption of perfect
  incompressibility in the {N}avier--{S}tokes equation.
\newblock {\em SIAM J. Numer. Anal.}, 53(5):2178--2205, 2015.

\bibitem{furihata}
D.~Furihata and T.~Matsuo.
\newblock {\em Discrete variational derivative method: a structure-preserving
  Numerical method for partial differential equations}.
\newblock CRC Press, Oxford, 2010.

\bibitem{gwiazda}
P.~Gwiazda, A.~{\'S}wierczewska-Gwiazda, and E.~Wiedemann.
\newblock Weak-strong uniqueness for measure-valued solutions of some
  compressible fluid models.
\newblock {\em Nonlinearity}, 28(11):3873--3890, 2015.

\bibitem{Pruess2}
M.~Hieber, M.~Nesensohn, J.~Pr{\"u}ss, and K.~Schade.
\newblock Dynamics of nematic liquid crystal flows: {T}he quasilinear approach.
\newblock {\em Ann. Inst. H. Poincar\'e Anal. Non Lin\'eaire}, 33(2):397--408,
  2016.

\bibitem{localin3d}
M.-C. Hong, J.~Li, and Z.~Xin.
\newblock Blow-up criteria of strong solutions to the {E}ricksen--{L}eslie
  system in {$\mathbb{R}^3$}.
\newblock {\em Comm. Partial Differential Equations}, 39(7):1284--1328, 2014.

\bibitem{blow}
T.~Huang, F.~Lin, C.~Liu, and C.~Wang.
\newblock Finite time singularity of the nematic liquid crystal flow in
  dimension three.
\newblock {\em Arch. Ration. Mech. Anal.}, 221(3):1223--1254, 2016.

\bibitem{rindler}
J.~Kristensen and F.~Rindler.
\newblock Characterization of generalized gradient {Y}oung measures generated
  by sequences in $\f {W}^{1,1}$ and \f{B}\f{V}.
\newblock {\em Arch. Ration. Mech. Anal.}, 197(2):539--598, 2010.

\bibitem{masswertig}
R.~Lasarzik.
\newblock Measure-valued solutions to the {E}ricksen--{L}eslie model equipped
  with the {O}seen--{F}rank energy.
\newblock {\em in preparation}, 2017.

\bibitem{leray}
J.~Leray.
\newblock Sur le mouvement d'un liquide visqueux emplissant l'espace.
\newblock {\em Acta Mathematica}, 63(1):193--248, 1934.

\bibitem{leslie}
F.~M. Leslie.
\newblock Some constitutive equations for liquid crystals.
\newblock {\em Arch. Ration. Mech. Anal.}, 28(4):265--283, 1968.

\bibitem{linliu1}
F.-H. Lin and C.~Liu.
\newblock Nonparabolic dissipative systems modeling the flow of liquid
  crystals.
\newblock {\em Comm. Pure Appl. Math.}, 48(5):501--537, 1995.

\bibitem{linliu2}
F.-H. Lin and C.~Liu.
\newblock Partial regularity of the dynamic system modeling the flow of liquid
  crystals.
\newblock {\em Discrete Contin. Dynam. Systems}, 2(1):1--22, 1996.

\bibitem{linliu3}
F.-H. Lin and C.~Liu.
\newblock Existence of solutions for the {E}ricksen--{L}eslie system.
\newblock {\em Arch. Ration. Mech. Anal.}, 154(2):135--156, 2000.

\bibitem{magnes}
J.-L. Lions and E.~Magenes.
\newblock {\em Problèmes aux limites non homogènes et applications. Volume
  1}.
\newblock Dunod, Paris, 1968.

\bibitem{lionsfluid}
P.-L. Lions.
\newblock {\em Mathematical topics in fluid mechanics. {V}ol. 1}.
\newblock The Clarendon Press, New York, 1996.

\bibitem{mclean}
W.~McLean.
\newblock {\em Strongly elliptic systems and boundary integral equations}.
\newblock Cambridge University Press, Cambridge, 2000.

\bibitem{roubicek}
T.~Roub{\'{\i}}{\v{c}}ek.
\newblock {\em Nonlinear partial differential equations with applications}.
\newblock Birkh{\"a}user, Basel, 2005.

\bibitem{serrin}
J.~Serrin.
\newblock On the interior regularity of weak solutions of the
  {N}avier--{S}tokes equations.
\newblock {\em Arch. Rational Mech. Anal.}, 9:187--195, 1962.

\bibitem{tartar}
L.~Tartar.
\newblock Compensated compactness and applications to partial differential
  equations.
\newblock In {\em Nonlinear analysis and mechanics: {H}eriot-{W}att
  {S}ymposium, {V}ol. {IV}}, volume~39 of {\em Res. Notes in Math.}, pages
  136--212. Pitman, Boston, 1979.

\bibitem{wal1}
N.~J. Walkington.
\newblock Numerical approximation of nematic liquid crystal flows governed by
  the {E}ricksen--{L}eslie equations.
\newblock {\em ESAIM Math. Model. Numer. Anal.}, 45(3):523--540, 2011.

\bibitem{recent}
W.~Wang, P.~Zhang, and Z.~Zhang.
\newblock Well-posedness of the {E}ricksen--{L}eslie system.
\newblock {\em Arch. Ration. Mech. Anal.}, 210(3):837--855, 2013.

\end{thebibliography}

\end{document}